\providecommand{\U}[1]{\protect \rule{.1in}{.1in}}
\newtheorem{theorem}{Theorem}[section]
\newtheorem{conjecture}[theorem]{Conjecture}
\newtheorem{corollary}[theorem]{Corollary}
\newtheorem{lemma}[theorem]{Lemma}
\newtheorem{proposition}[theorem]{Proposition}
\theoremstyle{remark}
\newtheorem{remark}[theorem]{Remark}
\numberwithin{equation}{section}
\begin{document}
\title[Curvature equations on rectangular tori]{Sharp nonexistence results for curvature equations with four singular sources on
rectangular tori}
\author{Zhijie Chen}
\address{Department of Mathematical Sciences, Yau Mathematical Sciences Center,
Tsinghua University, Beijing, 100084, China }
\email{zjchen2016@tsinghua.edu.cn}
\author{Chang-Shou Lin}
\address{Taida Institute for Mathematical Sciences (TIMS), Center for Advanced Study in
Theoretical Sciences (CASTS), National Taiwan University, Taipei 10617, Taiwan }
\email{cslin@math.ntu.edu.tw}

\begin{abstract}
In this paper, we prove that there are no solutions for the curvature equation
\[
\Delta u+e^{u}=8\pi n\delta_{0}\text{ on
}E_{\tau}, \quad n\in\mathbb{N},
\]
where $E_{\tau}$ is a flat rectangular torus and $\delta_{0}$ is the Dirac measure at
the lattice points. This confirms a conjecture in
\cite{CLW2} and also improves a result of Eremenko and Gabrielov \cite{EG}. The nonexistence is a delicate problem because the equation always has solutions if $8\pi n$ in the RHS is replaced by $2\pi \rho$ with $0<\rho\notin 4\mathbb{N}$. Geometrically, our result implies that a rectangular torus $E_{\tau}$ admits a metric with curvature $+1$ acquiring a conic singularity at the lattice points with angle $2\pi\alpha$ if and only if $\alpha$ is not an odd integer.

Unexpectedly,
our proof of the nonexistence result is to apply the spectral theory of finite-gap potential, or equivalently the algebro-geometric solutions of stationary KdV hierarchy equations. Indeed, our proof can also yield a sharp nonexistence result for the curvature equation with singular sources at three half periods and the lattice points.
\end{abstract}
\maketitle

\section{Introduction}

Throughout the paper, we use the notations $\omega_{0}=0$, $\omega_{1}=1$,
$\omega_{2}=\tau$, $\omega_{3}=1+\tau$ and $\Lambda_{\tau}=\mathbb{Z+Z}\tau$,
where $\tau \in \mathbb{H}=\left \{  \tau|\operatorname{Im}\tau>0\right \}  $.
Define $E_{\tau}:=\mathbb{C}/\Lambda_{\tau}$ to be a flat torus in the plane
and $E_{\tau}[2]:= \{ \frac{\omega_{k}}{2}|0\leq k\leq3\}+\Lambda_{\tau}$ to
be the set consisting of the lattice points and $2$-torsion points in
$E_{\tau}$. Consider the following curvature equation with \textit{four
singular sources}:%
\begin{equation}
\Delta u+e^{u}=8\pi \sum_{k=0}^{3}n_{k}\delta_{\frac{\omega_{k}}{2}}\quad\text{ on
}\; E_{\tau}, \label{mean}%
\end{equation}
where $\delta_{\omega_{k}/2}$ is the Dirac measure at $\frac{\omega_{k}}{2}$,
and $n_{k}\in \mathbb{Z}_{\geq 0}$ for all $k$ with $\sum n_{k}\geq1$. By changing variable $z\mapsto z+\frac{\omega_k}{2}$, we can always assume $n_{0}=\max_{k}n_{k}\geq1$.

Not surprisingly, (\ref{mean}) is related to various research areas. In
conformal geometry, a solution $u$ to (\ref{mean}) leads to a metric
$ds^{2}=\frac{1}{2}e^{u}(dx^{2}+dy^{2})$ with constant Gaussian curvature $+1$
acquiring \textit{conic singularities at $\frac{\omega_{k}}{2}$'s}. It also
appears in statistical physics as the equation for the \textit{mean field
limit} of the Euler flow in Onsager's vortex model (cf. \cite{CLMP}), hence
also called a \textit{mean field equation}. Recently equation (\ref{mean}) was shown to be related to the
self-dual condensates of the Chern-Simons-Higgs equation in superconductivity.
We refer the readers to \cite{CLW4,Choe,EG,LY,NT1} and references therein for
recent developments of related subjects of equation (\ref{mean}).

The existence of solutions of equation (\ref{mean}) is very challenging from
the PDE point of view. In fact, \emph{the solvability of (\ref{mean})
essentially depends on the moduli $\tau$ in a sophisticated manner}. This
phenomena was first discovered by Wang and the second author \cite{LW} when
they studied the case $n_{0}=1$ and $n_{1}=n_{2}=n_{3}=0$, i.e.
\begin{equation}
\Delta u+e^{u}=8\pi \delta_{0}\  \  \text{on}\ E_{\tau}. \label{eq1-1}%
\end{equation}
For example, they proved that when $\tau \in i\mathbb{R}_{>0}$ (i.e. $E_{\tau}$
is a rectangular torus), equation (\ref{eq1-1}) has \textit{no} solution;
while for $\tau=\frac{1}{2}+\frac{\sqrt{3}}{2}i$ (i.e. $E_{\tau}$ is a rhombus
torus), equation (\ref{eq1-1}) has solutions. Later, equation (\ref{eq1-1})
was thoroughly investigated in \cite{CKLW,LW4}.

For the case $n_{0}=n\geq2$ and $n_{1}=n_{2}=n_{3}=0$, i.e.
\begin{equation}
\Delta u+e^{u}=8n\pi \delta_{0}\  \  \text{on}\ E_{\tau}, \label{eq1}%
\end{equation}
Chai, Lin, Wang \cite{CLW} and subsequently Lin, Wang \cite{CLW2} studied it
from the viewpoint of algebraic geometry. They developed a theory to connect
this PDE problem with hyperelliptic curves and modular forms. Among other
things, they proposed the following conjecture.\medskip

\noindent \textbf{Conjecture. }\cite{Lin-CDM,CLW2} \textit{When $\tau \in i\mathbb{R}%
_{>0}$, i.e. $E_{\tau}$ is a rectangular torus, equation (\ref{eq1}) has no
solutions for any $n\geq2$.} \medskip

Geometrically, this conjecture is equivalent to assert that the rectangular torus admits no metric with constant curvature $1$ and a conical singularity with angle $2\pi (1+2n)$.
Recently in \cite{CKL}, we proved this conjecture for $n=2$. However, this
proof can not work for general $n\ge3$. One main purpose of this paper is to
resolve this conjecture via a completely different idea.

\begin{theorem}
\label{no-sol-n} If $\tau \in i\mathbb{R}_{>0}$, i.e. $E_{\tau}$ is a
rectangular torus, then equation (\ref{eq1}) has no solutions for any $n\geq1$.
\end{theorem}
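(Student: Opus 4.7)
The plan is to argue by contradiction: assuming a solution $u$ of (\ref{eq1}) exists on a rectangular torus $E_\tau$, I will reduce the existence question to a spectral condition on an associated Lamé equation and rule out that condition via finite-gap / KdV-hierarchy methods. First, to any such $u$ one attaches a (multi-valued) developing map $f$ on $E_\tau\setminus\{0\}$ satisfying $u=\log\frac{8|f'|^2}{(1+|f|^2)^2}$, whose projective monodromy on $\pi_1(E_\tau\setminus\{0\})$ lies in $\mathrm{PSU}(2)$. A standard Schwarzian computation yields
\[
\{f,z\}=-2\bigl(n(n+1)\wp(z;\tau)+B\bigr)
\]
for some accessory parameter $B\in\mathbb{C}$, so $f=y_1/y_2$ for a basis of the Lamé equation
\[
y''(z)=\bigl(n(n+1)\wp(z;\tau)+B\bigr)y(z).
\]
The existence of $u$ is thus equivalent to the existence of $B\in\mathbb{C}$ for which the projective monodromy of this Lamé equation is simultaneously conjugate into $\mathrm{PSU}(2)$ by a single element of $\mathrm{PSL}(2,\mathbb{C})$. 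Exploiting the anti-involutions $z\mapsto\bar z$ and $z\mapsto -z$ on the rectangular torus (and their induced action on the set of solutions, combined with the known evenness of any solution), I would first reduce to the case $B\in\mathbb{R}$.

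The key input is the classical fact that for $n\in\mathbb{N}$ the Lamé potential $V(z)=n(n+1)\wp(z;\tau)$ is the prototypical finite-gap (algebro-geometric) Schrödinger potential: it lies on the stationary $n$th KdV locus, and its Bloch-Floquet eigenfunctions are uniformized by a hyperelliptic spectral curve $\Gamma_n\colon\mu^2=P_{2n+1}(B)$ of genus $n$, whose $2n+1$ branch points are exactly the $B$-values admitting a doubly-periodic Lamé polynomial solution. Away from this branch locus, the two Bloch solutions are parameterized by points of $\Gamma_n$ and carry Floquet multipliers $\rho_\pm(\omega)$ along each period $\omega\in\{1,\tau\}$ expressible as integrals of explicit meromorphic differentials of the second kind on $\Gamma_n$. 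In the Bloch basis both monodromy matrices of the Lamé equation are simultaneously diagonal, so unitarizability of the projective monodromy is equivalent to the joint unimodularity $|\rho_+(1)|=|\rho_+(\tau)|=1$.

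The main obstacle, and the crux of the proof, is showing that for $\tau\in i\mathbb{R}_{>0}$ and $B\in\mathbb{R}$ these two unimodularity conditions on $\Gamma_n$ are never compatible off the branch locus. The two commuting anti-holomorphic involutions on $\Gamma_n$ induced by $z\mapsto\bar z$ and $z\mapsto -\bar z$ cut out the two real loci on $\Gamma_n$ along which $|\rho_+(1)|=1$ and $|\rho_+(\tau)|=1$, respectively; the rigidity coming from the stationary-KdV construction (encoded in sign/positivity properties of the underlying second-kind differentials) should force these real loci to meet only at the Weierstrass points of $\Gamma_n$. The branch points themselves are then eliminated separately: at each branch point the Lamé equation admits a Lamé polynomial as one solution, and reduction of order produces a second solution whose monodromy around the period transverse to the polynomial is a non-trivial unipotent element, which is not unitarizable. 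Together these two facts rule out every admissible $B$ and contradict the hypothetical existence of $u$, thereby establishing Theorem \ref{no-sol-n}.
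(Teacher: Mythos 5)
Your overall frame---developing map, Schwarzian, reduction to the Lam\'e equation $y''=(n(n+1)\wp+B)y$ with unitarizable monodromy, the finite-gap spectral curve with diagonal monodromy off the branch locus, and non-unitarizable unipotent monodromy at the branch points---does match the paper's architecture (Sections 2--3, in particular Theorem \ref{thm5}, Lemma \ref{lemma2} and Corollary \ref{coro}). But the step you yourself identify as ``the crux'' is not proved: you assert that the two real loci on $\Gamma_n$ where $|\rho_+(1)|=1$ and $|\rho_+(\tau)|=1$ ``should'' meet only at the Weierstrass points, by appeal to unspecified ``sign/positivity properties of the underlying second-kind differentials.'' That is precisely the content that has to be established, and no mechanism is supplied. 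The paper's actual mechanism is twofold and quite different from what you sketch: (a) using Gesztesy--Weikard spectral theory for Hill's equations with \emph{complex-valued} potentials, the set $\{E:\theta_1(E)\in\mathbb{R}\}$ is identified with the $L^2(\mathbb{R})$-spectrum of the shifted operator, and the combination of conjugation-symmetry of that spectrum (Lemma \ref{lemma4}), path-connectedness of its complement, and reality/distinctness of the branch points forces the classical band structure $(-\infty,E_{2g}]\cup[E_{2g-1},E_{2g-2}]\cup\cdots\cup[E_1,E_0]$ (Lemma \ref{lemma5}); (b) the modular transformation $\tau\mapsto-1/\tau$, which interchanges the two periods of the rectangular torus, shows that $\{E:\theta_2(E)\in\mathbb{R}\}$ is the \emph{complementary} union of bands $[E_{2g},E_{2g-1}]\cup\cdots\cup[E_0,+\infty)$, so the two conditions meet exactly at the branch points, where the monodromy is unipotent. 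Your proposal contains neither (a) nor (b), and in particular misses the period-swapping duality that is the engine of the whole argument.

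Two further gaps. First, your proposed reduction to $B\in\mathbb{R}$ via the anti-involution $z\mapsto\bar z$ does not work as stated: conjugation carries the Lam\'e equation with parameter $B$ to the one with parameter $\bar B$, so unitarizability at $B$ only implies unitarizability at $\bar B$; it does not force $B$ real unless the solution $u$ is itself symmetric under $z\mapsto\bar z$, which is exactly the hypothesis the paper is at pains \emph{not} to assume (this is its improvement over Eremenko--Gabrielov). In the paper, reality of any unitarizable $E$ is a \emph{conclusion} of the band-structure analysis, not a preliminary reduction. Second, you take for granted that the $2n+1$ branch points of $\Gamma_n$ are real and distinct for $\tau\in i\mathbb{R}_{>0}$; this is a genuine input (the paper's Theorem \ref{sharp-realroot}, proved for the Lam\'e case via a Sturm-sequence analysis of Heun polynomial coefficients in \cite{CKLT}) and must be either proved or properly cited, since without it the band-structure argument collapses.
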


There are two important consequences of Theorem \ref{no-sol-n}. One is related to the pre-modular form $Z_{r,s}^{(n)}(\tau)$ associated with the Lam\'{e} equation introduced by \cite{CLW2}. See Section \ref{premodularform}. The other is related to the following conjecture.

\begin{conjecture}\label{con-n-solution}
Suppose $\tau\in i\mathbb{R}_{>0}$ and $\rho\in (8\pi (n-1), 8\pi n)$, $n\in \mathbb{N}$. Then the equation
\[\Delta u+ e^u=\rho \delta_0\quad \text{on}\; E_{\tau}\]
possesses exactly $n$ solutions. \end{conjecture}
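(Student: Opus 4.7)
The plan is to combine Theorem \ref{no-sol-n} with the Leray--Schauder degree theory for singular mean field equations (Chen--Lin) and a continuation argument in $\rho$. After the standard substitution $v = u + \rho G_{0}$, where $G_{0}$ is the Green function of $-\Delta$ on $E_{\tau}$ with logarithmic singularity at $0$, the equation becomes a regular mean field equation $\Delta v + e^{-\rho G_{0}} e^{v} = \rho/|E_{\tau}|$ with a weight vanishing to order $\rho/(2\pi)$ at the origin. The topological degree on the torus for $\rho \in (8\pi(n-1), 8\pi n)$ is $d_{\rho} = n$, independent of $\tau$. This yields existence, but not yet the sharp count.

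To promote this to a lower bound of $n$ distinct solutions, I would trace branches of solutions along the half-line $\rho \in (0, 8\pi n]$. At each critical threshold $\rho = 8\pi k$ with $1 \le k \le n$, Theorem \ref{no-sol-n} forbids any convergent limit, so by the Brezis--Merle and Chen--Lin quantization every diverging sequence must concentrate at a single point with mass exactly $8\pi k$, and precisely one new branch bifurcates from infinity at each threshold. Non-degeneracy along each branch would be inherited from the spectral analysis of the linearized Lam\'e operator, whose kernel is trivial on the rectangular torus by the finite-gap argument underlying Theorem \ref{no-sol-n}. Together with the degree computation, this produces at least $n$ solutions in the interval.

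The main obstacle is the matching upper bound. Here I would invoke the algebro-geometric correspondence of \cite{CLW2}: for noncritical $\rho$, solutions on $E_{\tau}$ are in bijection with zeros of pre-modular forms $Z_{r,s}^{(n)}(\tau)$ as the projective monodromy data $(r,s) \in \mathbb{R}^{2}/\mathbb{Z}^{2}$ of a generalized Lam\'e equation vary, themselves parametrized by the hyperelliptic spectral curve of the stationary KdV hierarchy. Counting the real fibers of this curve over a noncritical $\rho$ on the rectangular locus $\tau \in i\mathbb{R}_{>0}$ should give exactly $n$, matching the degree and closing the argument. The delicate step is precisely this real enumeration on the spectral curve: I expect it to require extending the finite-gap analysis that produced Theorem \ref{no-sol-n} at the integer points $\rho = 8\pi n$ to the open intervals between them, via a monodromy-deformation argument along $\rho$. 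That transfer from integer levels to continuous levels, while preserving the reality constraints forced by $\tau \in i\mathbb{R}_{>0}$, is where I anticipate the proof will be genuinely hard.
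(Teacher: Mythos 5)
The statement you are proving is labelled a \emph{conjecture} in the paper, and the authors do not prove it: they explicitly record that it is known only for $\rho\in(0,8\pi)$ (from \cite{LW4}) and for $\rho=8\pi(n-\tfrac12)$ (from \cite{CLW}), and that even with Theorem \ref{no-sol-n} in hand their forthcoming work \cite{Chen-Lin2017} will only treat the perturbative ranges $|\rho-8\pi n|\ll 1$ and $|\rho-8\pi(n-1)|\ll 1$. So there is no proof in the paper to compare against, and your proposal should be judged as a standalone argument. As such it is a research program rather than a proof, and several of its load-bearing steps are asserted without justification.

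Concretely: (i) the claim that ``precisely one new branch bifurcates from infinity at each threshold $\rho=8\pi k$'' does not follow from Theorem \ref{no-sol-n} plus Brezis--Merle quantization. Nonexistence at $\rho=8\pi k$ together with compactness failure tells you blow-up \emph{may} occur as $\rho\to 8\pi k$, but the number of blowing-up families is governed by a delicate Lyapunov--Schmidt reduction near the concentration set (with constraints involving the Green function of $E_\tau$), not by the quantization alone. (ii) The nondegeneracy claim --- that the linearized operator at each solution has trivial kernel ``by the finite-gap argument underlying Theorem \ref{no-sol-n}'' --- is a non sequitur: Theorem \ref{no-sol-n} concerns unitarity of the monodromy of the integer-index Lam\'e equation at $\rho=8\pi n$, whereas the linearization of the PDE at a solution for noninteger $\rho/8\pi$ is a different operator, and the paper's spectral machinery (Lemmas \ref{lemma2}--\ref{lemma5}) applies only when the $n_k$ are nonnegative integers. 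Without nondegeneracy, a degree count of $n$ is only a signed count and yields neither a lower nor an upper bound of $n$ distinct solutions. (iii) You acknowledge that the exact upper bound --- the ``real enumeration on the spectral curve'' for noninteger $\rho/8\pi$ --- is open; that is precisely the content of the conjecture, so the argument is circular at its crux. The proposal correctly identifies the relevant tools (degree theory, blow-up analysis, the pre-modular form correspondence of \cite{CLW2}), but it does not close any of the gaps that make this a conjecture rather than a theorem.
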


Conjecture \ref{con-n-solution} was already proved for $\rho\in (0,8\pi)$ in \cite{LW4} and for $\rho=8\pi (n-\frac{1}{2})$ in \cite{CLW}.
In \cite{Chen-Lin2017}, we will apply Theorem \ref{no-sol-n} to prove Conjecture \ref{con-n-solution} for $|\rho-8\pi n|\ll 1$ and $|\rho-8\pi (n-1)|\ll 1$.

In fact, our proof of Theorem \ref{no-sol-n} also works for equation
(\ref{mean}) with more general $n_{k}$'s. Our first main result of this paper
is the following sharp nonexistence result.

\begin{theorem}
\label{no-even-solution} Let $n_{k}\in \mathbb{Z}_{\geq 0}$ for all $k$ with
$\max_{k} n_{k}\geq1$. If $(n_{0},n_{1},n_{2},n_{3})$ satisfies neither \begin{equation}
\label{c1}\frac{n_{1}+n_{2}-n_{0}-n_{3}}{2}\geq1,\quad n_{1}\geq1,\quad
n_{2}\geq1
\end{equation}
nor
\begin{equation}
\label{c2}\frac{n_{1}+n_{2}-n_{0}-n_{3}}{2}\leq-1,\quad n_{0}\geq1,\quad
n_{3}\geq1,
\end{equation} then for any $\tau \in i\mathbb{R}_{>0}$, equation
(\ref{mean}) on $E_{\tau}$ has no even solutions.
\end{theorem}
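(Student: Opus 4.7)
The plan is to argue by contradiction, following the route advertised in the introduction: reduce the PDE to a generalized Lam\'e-type ODE, exploit the finite-gap structure of the associated potential, and extract a combinatorial obstruction from the symmetries available on a rectangular torus. Assume $u$ is an even solution of (\ref{mean}) on $E_\tau$ with $\tau\in i\mathbb{R}_{>0}$. Liouville's theorem provides a meromorphic developing map $f$ on $\mathbb{C}$ with $e^u = 8|f'|^2/(1+|f|^2)^2$ and $SU(2)$-valued monodromy. Writing $f=y_1/y_2$, the pair $(y_1,y_2)$ spans the solution space of the generalized Lam\'e (Treibich--Verdier) equation
\begin{equation}
y'' = \Bigl[\, \sum_{k=0}^{3} n_k(n_k+1)\,\wp\!\left(z-\tfrac{\omega_k}{2}\right) + B \,\Bigr] y,
\end{equation}
for some accessory parameter $B\in\mathbb{C}$. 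By the classical theorem of Treibich--Verdier, this potential is a finite-gap (algebro-geometric) potential for every choice of integers $n_k\geq 0$ and every $B$; its Floquet theory is thus governed by a compact hyperelliptic spectral curve $\Gamma$ together with an eigenfunction realized via the Hermite--Krichever (Baker--Akhiezer) ansatz.

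Next I would exploit the two geometric symmetries present in the setup. Evenness of $u$, combined with $I(-z)=I(z)$ (which follows from $\wp$ being even and each $\omega_k$ lying in $\Lambda_\tau$), shows that $z\mapsto -z$ preserves the solution space of the ODE and, after an $SU(2)$-normalization, gives an involution of the developing map $f$. Because $\tau\in i\mathbb{R}$, the anti-holomorphic map $z\mapsto -\bar z$ fixes all four singularities $\omega_k/2$ modulo $\Lambda_\tau$ and preserves (\ref{mean}); running through the Liouville correspondence then forces the accessory parameter $B$ to be real and yields a second ($SU(2)$-anti-unitary) symmetry of $f$. Together, the two involutions impose a real structure on the spectral curve $\Gamma$ and an additional parity constraint on the Baker--Akhiezer eigenfunction at the point of $\Gamma$ lying over $B$.

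The heart of the proof---and the main obstacle---is to transfer this parity and reality data into a combinatorial inequality on $(n_0,n_1,n_2,n_3)$. Concretely, the local behavior of the Baker--Akhiezer function at $\omega_k/2$ is governed by the Fuchsian exponents $-n_k$ and $n_k+1$, while the global constraint from unitary monodromy combined with both involutions forces a balance of zeros and poles between the pair $\{\omega_0/2,\omega_3/2\}$ (lying on the ``real oval'' of the real structure, up to translation) and the pair $\{\omega_1/2,\omega_2/2\}$ (lying on the ``imaginary oval''). I expect this balance to reduce, after theta-function simplifications, to the requirement that $\tfrac12(n_1+n_2-n_0-n_3)$ have absolute value at least one together with strict positivity of the smaller pair---that is, exactly the disjunction of (\ref{c1}) and (\ref{c2}). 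Failure of both conditions then contradicts the spectral-theoretic balance, completing the argument. The delicate step is carrying out this count cleanly: either by working directly with the polynomial ``Lam\'e curve'' cut out by the finite-gap condition in $B$, or by exploiting explicit theta-function representations for finite-gap Baker--Akhiezer functions on a rectangular lattice and tracking reality of their divisors under the anti-holomorphic involution.
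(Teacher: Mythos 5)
Your opening reduction is the same as the paper's: an even solution of (\ref{mean}) produces, via the Liouville developing map, the generalized Lam\'e equation (\ref{GLE-0}) with unitary monodromy for some accessory parameter $E$ (this is Theorems \ref{general-unitary} and \ref{thm5}). After that, however, your argument has two genuine gaps. First, you invoke the anti-holomorphic involution $z\mapsto-\bar z$ to conclude that the accessory parameter is real. That step requires the solution to satisfy $u(z)=u(\bar z)$ in addition to $u(z)=u(-z)$, and the whole point of Theorem \ref{no-even-solution} --- emphasized in the introduction as the nontrivial improvement over Eremenko--Gabrielov's Theorem A --- is that no such reflection symmetry is assumed. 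What the conjugation actually gives you is only that the conditional stability set $\tilde{\mathcal{S}}$ of (\ref{tildes}) is invariant under $E\mapsto\bar E$ as a set (Lemma \ref{lemma4}); it does not make an individual accessory parameter real. In the paper, the realness of the relevant $E$'s is an \emph{output} of the spectral theory (the stability set is shown to be a union of real intervals once the spectral polynomial has real distinct roots, Lemma \ref{lemma5}), not an input from a symmetry of $u$.

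Second, what you call the heart of the proof --- transferring parity and reality data into the combinatorial condition on $(n_0,n_1,n_2,n_3)$ via a zero/pole balance of the Baker--Akhiezer function and ``theta-function simplifications'' --- is stated as an expectation rather than carried out, and there is no mechanism in your sketch that would actually produce (\ref{c1})/(\ref{c2}). The paper's route is different and rests on two independent ingredients absent from your proposal: (i) a Sturm-sequence-type analysis of the Heun polynomial recursion (Theorems \ref{thm:Sturm} and \ref{thm:Sturm1}, Section \ref{real-distinct}) showing that failure of both (\ref{c1}) and (\ref{c2}) forces the spectral polynomial $Q^{(n_0,n_1,n_2,n_3)}(\cdot;\tau)$ to have $2g+1$ real distinct roots; and (ii) a modular duality argument (Lemma \ref{lemma3} and the proof of Theorem \ref{thm4}): unitarity forces both exponents $\theta_1(E),\theta_2(E)$ to be real, which places $E$ in the stability set for $\tau$ and $\tau^2E$ in the stability set of the swapped potential for $-1/\tau$; after rescaling these are complementary unions of real intervals meeting only at the roots of $Q$, which are excluded by Corollary \ref{coro}, giving the contradiction. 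Without (i) you have no link between the hypothesis on the $n_k$'s and any spectral statement, and without (ii) you have no contradiction; so the argument cannot be completed along the lines you describe.
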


\begin{remark}
It was proved in \cite{CLW} that once (\ref{eq1}) has a solution, then it has
also an even solution. Thus Theorem \ref{no-sol-n} follows directly from
Theorem \ref{no-even-solution}.
\end{remark}

It follows from \cite{EG} that our condition on $n_k$ in Theorem \ref{no-even-solution} is \emph{sharp}. In fact,
recently Eremenko and Gabrielov \cite{EG} studied (\ref{mean}) from
the viewpoint of geometry, i.e. by studying a related problem concerning
spherical quadrilaterals. Among other things, they prove the following result.
\medskip

\noindent \textbf{Theorem A}. \cite[Theorem 1.3]{EG} \textit{Let $n_{k}%
\in \mathbb{Z}_{\geq 0}$ for all $k$ with $\max_{k} n_{k}\geq1$. Then
equation (\ref{mean}) has an even and symmetric solution $u(z)$, i.e.
\begin{equation}
\label{even-symmetric}u(z)=u(-z)\quad \text{and}\quad u(z)=u(\bar{z}),
\end{equation}
on some rectangular torus $E_{\tau}$ (i.e. for some $\tau \in i\mathbb{R}_{>0}%
$) if and only if $(n_{0},n_{1},n_{2},n_{3})$ satisfies either (\ref{c1}) or (\ref{c2}).}
\medskip

Theorem A indicates that our condition on $n_k$ in Theorem \ref{no-even-solution} is {\it sharp}. Furthermore,
Theorem \ref{no-even-solution} improves Theorem A because we remove the
symmetric assumption $u(z)=u(\bar{z})$. We emphasize that this improvement is not trivial at all, because our numerical computation shows that there exist $1<b_1<b_2<\sqrt{3}$ such that for any $\tau=ib$ with $b\in (b_1,b_2)$,
\[\Delta u+ e^u=16\pi \delta_0+16\pi \delta_{\omega_3/2}\;\text{on}\; E_{\tau}\]has \emph{no even and symmetric solutions} but does have \emph{two even solutions}. In view of Theorem
\ref{no-sol-n}, we suspect that the even assumption $u(z)=u(-z)$ is not necessary either, namely we propose the following conjecture.

\begin{conjecture}
Equation (\ref{mean}) has no solution for any $\tau\in i\mathbb{R}_{>0}$ if and only if $(n_{0},n_{1},n_{2},n_{3})$ satisfies neither (\ref{c1}) nor (\ref{c2}).
\end{conjecture}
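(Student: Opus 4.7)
The `only if' direction of the conjecture is an immediate consequence of Theorem A of Eremenko-Gabrielov, so all of the content lies in the `if' direction. Given Theorem \ref{no-even-solution}, which already rules out even solutions under the same hypothesis, it suffices to establish an \emph{evenness principle}: every solution of (\ref{mean}) on a rectangular torus $E_\tau$ with $\tau\in i\mathbb{R}_{>0}$ automatically satisfies $u(z)=u(-z)$. A version of this principle conditional on $(n_0,\dots,n_3)$ failing both (\ref{c1}) and (\ref{c2}) would also suffice, and may in fact be what one has to settle for. The key structural input is that the singular set $\{\omega_k/2\}$ is pointwise fixed under $z\mapsto -z$, so $v(z):=u(-z)$ is automatically another solution of (\ref{mean}) with exactly the same $n_k$'s; via Liouville's formula $u=\log\bigl(8|f'|^2/(1+|f|^2)^2\bigr)$ this translates into $f_v = M\circ f_u(-\,\cdot\,)$ for some $M\in \mathrm{PSU}(2)$, and the evenness principle is equivalent to $M$ being the identity after a suitable normalization.

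To pin $M$ down I would follow the finite-gap philosophy announced in the abstract. Any solution $u$ produces a Lam\'{e}-type potential $Q_u$ on $E_\tau$ from the Schwarzian derivative of $f_u$, with prescribed double poles at the $\omega_k/2$. When $\tau\in i\mathbb{R}_{>0}$, the restriction of $Q_u$ to a suitable horizontal line is a real algebro-geometric potential of the stationary KdV hierarchy, so the associated hyperelliptic spectral curve and its Picard variety come equipped with two commuting involutions, one from $z\mapsto -z$ and the other from $z\mapsto \bar z$. The pair $(u,v)$ then provides two divisors on the \emph{same} spectral curve that are exchanged by the first involution, and a rigidity theorem identifying these divisors would force $u=v$ and close the proof.

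The step I expect to be the main obstacle is exactly this rigidity, and the paper itself flags its delicacy. In the numerical example reported for $(n_0,\dots,n_3)=(2,0,0,2)$, two distinct even but non-symmetric solutions coexist on the same rectangular torus, showing that the companion involution $z\mapsto\bar z$ is genuinely non-rigid. The hope is that $z\mapsto -z$ is rigid for a different reason: it fixes each half-period and ought to act trivially on the part of the Baker-Akhiezer data that encodes the solution, whereas $z\mapsto\bar z$ is allowed to permute the sheets of the spectral curve nontrivially. Turning this heuristic into a proof, and extending the algebro-geometric analysis of \cite{CLW,CLW2} beyond the even class, is where most of the work would go. A riskier but more elementary alternative would be a $\mathbb{Z}/2$-equivariant Leray-Schauder degree argument for the involution $z\mapsto -z$, which would bypass the explicit spectral machinery at the cost of needing a priori estimates uniform across $\tau\in i\mathbb{R}_{>0}$.
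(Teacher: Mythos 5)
First, a point of orientation: the statement you were asked to prove is labeled a \emph{conjecture} in the paper, and the paper does not prove it. The authors establish only the even case (Theorem \ref{no-even-solution}) together with sharpness via Eremenko--Gabrielov's Theorem A, and they explicitly flag the removal of the evenness hypothesis as open. So there is no proof in the paper to compare yours against; the question is whether your proposal closes the gap. It does not. Your ``only if'' direction via Theorem A is correct (if (\ref{c1}) or (\ref{c2}) holds, Theorem A produces an even symmetric solution on some rectangular torus), and this is exactly the sharpness remark already in the paper. The ``if'' direction is where the content lies, and your outline has two concrete problems beyond the rigidity step you yourself acknowledge is unproved.

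First, the ``evenness principle'' in its unconditional form is false whenever a solution exists at all. If $u$ is a solution with developing map $f$ as in (\ref{502}), the monodromy of $f$ is abelian and (generically) simultaneously diagonalizable, so $f_\lambda=\lambda f$ is again a developing map with unitary monodromy for every $\lambda\in\mathbb{C}^*$, producing a one-parameter family of solutions $u_\lambda$; a direct computation with $f(-z)=1/f(z)$ shows $u_\lambda$ is even only for $|\lambda|=1$. Hence ``every solution is even'' can only hold vacuously, i.e. it is logically equivalent to the conjecture itself, and any attempt to prove it by taking a solution and showing it equals its reflection is circular. The correct (and genuinely open) reduction is the weaker statement ``existence of some solution implies existence of an even solution,'' which \cite{CLW} proves only for $n_1=n_2=n_3=0$ (this is precisely how Theorem \ref{no-sol-n} is deduced from Theorem \ref{no-even-solution}). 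Second, your finite-gap step silently assumes that the Schwarzian of an arbitrary solution yields the Treibich--Verdier potential of (\ref{GLE-0}). It does not: for a non-even solution, the elliptic function $u_{zz}-\tfrac12 u_z^2$ in (\ref{Iz}) has double poles at the $\omega_k/2$ with the prescribed leading coefficients $n_k(n_k+1)$ but in general \emph{nonzero residues} there; the residues vanish exactly because of evenness in (\ref{cc-1}). Without that, the associated ODE is a Heun-type equation with extra accessory parameters, the potential is not finite-gap in the sense used in Section \ref{general-lame}, and the entire spectral-curve apparatus you invoke (and that the paper relies on) is unavailable. These two issues, together with the unproved rigidity claim, mean the proposal is a research program rather than a proof, and its central reduction as stated cannot work.
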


Differently from Eremenko and Gabrielov's geometric approach \cite{EG}, we
prove Theorem \ref{no-even-solution} from the viewpoint of the integrable
system in the sense that any solution $u$ can be expressed as some holomorphic
data, by which we will connect the curvature equation (\ref{mean}) to the
following generalized Lam\'{e} equation (GLE, a second order linear ODE)
\begin{equation}
y^{\prime \prime}(z)=I(z)y(z)=\bigg[  \sum_{k=0}^{3}n_{k}(n_{k}+1)\wp \left(
z+\tfrac{\omega_{k}}{2};\tau \right)  +E\bigg]  y(z). \label{GLE-0}%
\end{equation}
See Section \ref{PDE-unitary} for details. Here $\wp(z)=\wp(z;\tau)$ is the
Weierstrass elliptic function with periods $\omega_{1}=1$ and $\omega_{2}%
=\tau$, defined by
\begin{equation}
\wp(z;\tau):=\frac{1}{z^{2}}+\sum_{\omega \in \Lambda_{\tau}\setminus
\{0\}}\left(  \frac{1}{(z-\omega)^{2}}-\frac{1}{\omega^{2}}\right)  .
\label{wp}%
\end{equation}
Note that GLE (\ref{GLE-0}) becomes the classical Lam\'{e} equation when three
$n_{k}$'s vanish, such as $n_{1}=n_{2}=n_{3}=0$. GLE (\ref{GLE-0}) is the
elliptic form of Heun's equation and the potential
\begin{equation}\label{Tre}
q^{(n_{0},n_{1},n_{2},n_{3})}(z):=-\sum_{k=0}^{3}n_{k}(n_{k}+1)\wp \left(
z+\tfrac{\omega_{k}}{2};\tau \right)
\end{equation}
is the so-called \emph{Treibich-Verdier potential }(\cite{TV}), which is known
as an algebro-geometric finite-gap potential associated with the stationary
KdV hierarchy. We refer the readers to \cite{GW1,Tak1,Tak2,Tak3,Tak4,Tak5,TV}
and references therein for historical reviews and subsequent developments.

In Section \ref{PDE-unitary}, we will prove that \textit{once
(\ref{mean}) has an even solution, then there exists $E\in \mathbb{C}$ such
that the monodromy group of GLE (\ref{GLE-0}) is conjugate to a subgroup of
$SU(2)$, i.e. the monodromy of GLE (\ref{GLE-0}) is unitary}.
Therefore, to prove Theorem \ref{no-even-solution}, it suffices for us to prove the following result, which is also
interesting from the viewpoint of the monodromy theory of linear ODEs.

\begin{theorem}
\label{no-unitary} Let $n_{k}\in \mathbb{Z}_{\geq 0}$ for all $k$ with
$\max_{k} n_{k}\geq1$ and $\tau \in i\mathbb{R}_{>0}$. If $(n_{0},n_{1},n_{2},n_{3})$ satisfies neither (\ref{c1}) nor (\ref{c2}), then the
monodromy of GLE (\ref{GLE-0}) can not be unitary for any $E\in \mathbb{C}$.
\end{theorem}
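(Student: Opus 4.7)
The plan is to exploit the finite-gap structure of the Treibich-Verdier potential to recast unitarity of the GLE monodromy as a condition on the real locus of two antiholomorphic involutions on the spectral curve, and then to show that this locus is empty when neither (\ref{c1}) nor (\ref{c2}) holds. First, since $V(z):=\sum_{k}n_{k}(n_{k}+1)\wp(z+\omega_{k}/2)$ is algebro-geometric of the stationary KdV hierarchy, the operator $L=-\partial_{z}^{2}+V$ commutes with a differential operator $A$ of odd order, and $(L,A)$ share a hyperelliptic spectral curve $\Gamma_{\tau}:\mu^{2}=R(E)$. For each $P=(E,\mu)\in\Gamma_{\tau}$ the Baker-Akhiezer eigenfunction $\psi(z;P)$ simultaneously diagonalizes the GLE monodromy. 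The apparent-singularity property at each $\omega_{k}/2$ (no logarithmic terms) makes the local monodromies trivial, so the global monodromy factors through $\pi_{1}(E_{\tau})=\mathbb{Z}^{2}$ and is encoded by two Floquet multipliers $\lambda_{1}(P),\lambda_{2}(P)$; unitarity at parameter $E$ then amounts to the existence of a lift $P\in\Gamma_{\tau}$ with $|\lambda_{1}(P)|=|\lambda_{2}(P)|=1$.

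Next, I would exploit the reality induced by $\tau\in i\mathbb{R}_{>0}$. The identity $\overline{\wp(\bar{z};ib)}=\wp(z;ib)$, together with the invariance of every half-period modulo $\Lambda_{\tau}$ under complex conjugation, equips $V$ with two reality structures: it is real on the real axis $\mathbb{R}$ and on the imaginary axis $i\mathbb{R}$. Reducing GLE to each of these lines yields two genuinely real Hill problems, $H_{1}=-\partial_{x}^{2}+V(x)$ on $\mathbb{R}$ with period $1$ and $H_{2}=-\partial_{y}^{2}-V(iy)$ on $\mathbb{R}$ with period $b=\mathrm{Im}\,\tau$, both carrying $\Gamma_{\tau}$ as their spectral curve. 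A short argument then identifies unitarity of the GLE monodromy at parameter $E$ with the simultaneous conditions $-E\in\sigma(H_{1})$ and $E\in\sigma(H_{2})$; equivalently, $P$ must be a common fixed point of the two antiholomorphic involutions $\sigma_{1},\sigma_{2}$ on $\Gamma_{\tau}$ induced by $H_{1}$ and $H_{2}$.

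The decisive step, and the main obstacle, is to show that this simultaneous spectral condition has no solution when neither (\ref{c1}) nor (\ref{c2}) holds. I would extract the band edges of $H_{1}$ and $H_{2}$ from the branch points of $R(E)$, and combine Darboux-Crum transformations within the Treibich-Verdier hierarchy (which relate $(n_{0},n_{1},n_{2},n_{3})$ to shifted tuples) with an oscillation count for the Baker-Akhiezer eigenfunctions along the real and imaginary segments. The four local exponents $n_{k}$ should control how the branch points of $R$ distribute between the real loci of $\sigma_{1}$ and $\sigma_{2}$, and the arithmetic conditions (\ref{c1})-(\ref{c2}) should emerge exactly as the combinatorial criterion that allows the two real spectra to overlap. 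Theorem A of Eremenko-Gabrielov, specialized to the symmetric subcase $u(z)=u(\bar{z})$, provides a useful external consistency check for this final combinatorial analysis.
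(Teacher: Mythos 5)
Your high-level strategy -- recast unitarity as membership of $E$ in two real spectra attached to the two cycles of the rectangular torus, then show these spectra cannot overlap in their interiors -- is genuinely the same skeleton as the paper's proof (Theorem \ref{thm4} combined with Theorem \ref{sharp-realroot}). However, the proposal has two concrete gaps. First, your two ``genuinely real Hill problems'' $H_{1},H_{2}$ are obtained by restricting the Treibich--Verdier potential to the real and imaginary axes, but those axes pass through the singular points $\tfrac{\omega_{k}}{2}$ (e.g.\ $0$ and $\tfrac12$ lie on $\mathbb{R}$, $0$ and $\tfrac{\tau}{2}$ on $i\mathbb{R}$), so the restricted potentials have double poles and the classical real Floquet/spectral theory you invoke does not apply. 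The paper instead restricts to the line $\mathbb{R}+\tfrac{\tau}{4}$, where the potential is smooth but \emph{complex-valued}, and must therefore use the Gesztesy--Weikard theory for complex potentials (Theorem B) together with the conjugation symmetry of the conditional stability set (Lemma \ref{lemma4}) and path-connectedness of the resolvent set to recover reality of the spectrum; the second cycle is handled not by a second real line but by the modular transformation $\tau\mapsto -1/\tau$ (Lemma \ref{lemma3}), under which $E\mapsto\tau^{2}E$ with $\tau^{2}<0$ \emph{reverses the order} of the real branch points, so the $\ell_{2}$-spectrum becomes exactly the closed complement $[E_{2g},E_{2g-1}]\cup\cdots\cup[E_{0},+\infty)$ of the $\ell_{1}$-spectrum; the two sets meet only at the zeros of $Q$, where the monodromy is non-diagonalizable. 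This order-reversal mechanism is the decisive point of the disjointness argument and is absent from your sketch.

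Second, and more seriously, the entire arithmetic content of the theorem -- why ``neither (\ref{c1}) nor (\ref{c2})'' suffices -- is compressed in your proposal into an unexecuted plan (``oscillation count'', ``Darboux--Crum transformations'', a ``combinatorial criterion''). In the paper this is Theorem \ref{sharp-realroot}: one must prove that under that hypothesis \emph{all} $2g+1$ zeros of the spectral polynomial $Q^{(n_{0},n_{1},n_{2},n_{3})}(\cdot;\tau)$ are real and distinct. This requires factoring $Q$ into Takemura's Heun polynomials $P^{(j)}$, reducing to three-term recurrences for the coefficients $c_{m}(q)$, and running a Sturm-type interlacing induction -- including a new case (Theorem \ref{thm:Sturm1}) where the sequence is \emph{not} a Sturm sequence because the signs of $m+\gamma_{3}$ and of the leading coefficients flip at $m=n_{3}$, and the two flips must be shown to compensate. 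Nothing in your outline engages with this computation, and your suggestion to use Eremenko--Gabrielov's Theorem A as a ``consistency check'' cannot substitute for it: in the paper Theorem A enters only to establish the \emph{necessity} (sharpness) of the condition, not the sufficiency needed here. As written, the proposal would not close.
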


It is well known (cf. \cite{GW1,Tak1}) that there associates a so-called {\it spectral polynomial} $Q^{(n_0,n_1,n_2,n_3)}(E;\tau)$ for the Treibich-Verdier potential (\ref{Tre}); see Section \ref{general-lame} for a brief review. In this paper, as applications of Theorem \ref{no-even-solution} and Theorem A, we have the following surprising result.

\begin{theorem}[=Corollary \ref{sharp-realroot1}]\label{sharp-realroot11}
Let $n_{k}\in \mathbb{Z}_{\geq 0}$ for all $k$ with $\max_k n_{k}\geq1$. Then all the zeros of $Q^{(n_{0},n_{1},n_{2},n_{3})}(\cdot;\tau)$ are real
and distinct for each $\tau \in i\mathbb{R}_{>0}$ if and only if $(n_{0}%
,n_{1},n_{2}$, $n_{3})$ satisfies neither (\ref{c1}) nor (\ref{c2}).
\end{theorem}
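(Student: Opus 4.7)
The idea is to package the earlier results of this paper together with Eremenko--Gabrielov's Theorem A and to translate the conclusion from the PDE/monodromy side into the algebraic statement about the spectral polynomial $Q^{(n_{0},n_{1},n_{2},n_{3})}(\cdot;\tau)$. The bridge is an equivalence coming from the finite-gap interpretation of GLE (\ref{GLE-0}) reviewed in Section \ref{general-lame}, which is where the real work lies.

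\textbf{Step 1 (The bridge).} The key claim to establish is that, for each fixed $\tau \in i\mathbb{R}_{>0}$,
\[
Q^{(n_{0},n_{1},n_{2},n_{3})}(\cdot;\tau)\text{ has only real and simple roots}
\iff
\text{GLE (\ref{GLE-0}) admits no unitary monodromy for any }E\in\mathbb{C}.
\]
Here the spectral curve $\mu^{2}=Q(E;\tau)$ is hyperelliptic, its branch points are exactly the roots of $Q$, and the rectangular reality symmetry $q(\bar{z})=\overline{q(z)}$ forces $Q(\cdot;\tau)$ to have real coefficients so non-real roots occur in complex-conjugate pairs. A double root of $Q$ is a node of the spectral curve at which the two Floquet sheets collapse; the monodromy at that value of $E$ degenerates to a scalar $\pm\mathrm{Id}$, which is trivially unitary. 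A non-real pair of branch points $E_{*},\overline{E_{*}}$, via the Baker--Akhiezer function and continuity of the Floquet exponent in $E$, produces a real $1$-parameter family of $E$'s at which both Floquet multipliers have modulus one, again yielding unitary monodromy. Conversely, once the branch points are real and distinct, a real-line analysis rules out unitary monodromy for every $E\in\mathbb{C}$.

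\textbf{Step 2 ($\Leftarrow$).} Assume that neither (\ref{c1}) nor (\ref{c2}) holds. Theorem \ref{no-unitary} then says that for every $\tau\in i\mathbb{R}_{>0}$ and every $E\in\mathbb{C}$ the monodromy of GLE (\ref{GLE-0}) is not unitary. Applying Step 1, $Q(\cdot;\tau)$ has only real and simple roots for each such $\tau$.

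\textbf{Step 3 ($\Rightarrow$).} Argue contrapositively. Suppose (\ref{c1}) or (\ref{c2}) holds. By Theorem A there exists $\tau\in i\mathbb{R}_{>0}$ for which (\ref{mean}) admits an even and symmetric solution on $E_{\tau}$. By the construction of Section \ref{PDE-unitary}, this produces some $E\in\mathbb{C}$ at which GLE has unitary monodromy. Step 1 then forces $Q(\cdot;\tau)$ to have a repeated or a non-real root, so that not all of its roots are real and simple.

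\textbf{Main obstacle.} Essentially everything nontrivial is concentrated in Step 1, namely translating the analytic statement ``no $E\in\mathbb{C}$ gives unitary monodromy'' into the purely algebraic statement ``all branch points of the spectral curve are real and distinct.'' This needs a careful tracking of how the Floquet multipliers of GLE move in the complex $E$-plane, combined with the reality/reflection symmetries peculiar to rectangular tori and the explicit form of the Baker--Akhiezer function attached to the Treibich--Verdier potential (\ref{Tre}). Once this equivalence is available, the corollary is immediate from Theorem \ref{no-unitary} and Theorem A.
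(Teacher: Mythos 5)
Your Step 3 is essentially the paper's argument for the ``only if'' direction: Theorem A produces an even symmetric solution for some $\tau\in i\mathbb{R}_{>0}$, Theorem \ref{thm5} turns it into unitary monodromy for some $E$, and the contrapositive of Theorem \ref{thm4} (real distinct zeros $\Rightarrow$ no unitary monodromy) then forces a multiple or non-real zero. That much is sound. The genuine gap is Step 2, and it is twofold. First, it is circular: in the paper Theorem \ref{no-unitary} is not an independent input but is itself \emph{deduced} from the real-distinct-roots statement (Theorem \ref{sharp-realroot}) together with Theorem \ref{thm4}, so you cannot invoke it to derive real-distinct roots. Second, the direction of your ``bridge'' that Step 2 actually needs --- no unitary monodromy for any $E$ implies all roots of $Q$ are real and simple --- is the \emph{converse} of Theorem \ref{thm4}; it is nowhere established in the paper and appears to be false. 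Remark \ref{remark4-2} exhibits $(n_0,n_1,n_2,n_3)=(1,0,0,1)$, for which $Q^{(1,0,0,1)}(\cdot;\tau)$ has two non-real roots for \emph{every} $\tau\in i\mathbb{R}_{>0}$, while the corresponding curvature equation has no even solutions for $\tau=ib$ with $b\in[b_0,b_1]$; granting Remark \ref{rm} (unitary monodromy $\Rightarrow$ even solution), these tori admit no unitary monodromy for any $E$ and yet $Q$ has non-real roots, contradicting your equivalence. Your heuristic for the bridge is also inconsistent with the paper's Lemma \ref{lemma2}: at a zero of $Q(E)$ the two monodromy generators cannot be simultaneously diagonalized (the common eigenspace is one-dimensional), so the monodromy there is a nontrivial parabolic, not $\pm\mathrm{Id}$, and is never unitarizable; and a non-real branch point controls only one Floquet exponent, whereas unitarity requires $(\theta_1(E),\theta_2(E))\in\mathbb{R}^2$ simultaneously --- which is exactly why the paper's proof of Theorem \ref{thm4} must combine the two cycles via the modular transformation $\tau\mapsto -1/\tau$.

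What your proposal omits is precisely where the paper does the real work for the ``if'' direction: a direct algebraic proof (Section \ref{real-distinct}) that neither (\ref{c1}) nor (\ref{c2}) implies real distinct zeros. This proceeds by transforming the GLE to Heun form, using Takemura's factorization $Q=P^{(0)}P^{(1)}P^{(2)}P^{(3)}$ with pairwise coprime factors, and running Sturm-sequence-type interlacing arguments (Theorems \ref{thm:Sturm} and \ref{thm:Sturm1}) on the three-term recursions for the coefficients $c_m(q)$, together with reductions such as $Q^{(n_0,n_1,n_2,n_3)}=Q^{(l_0,l_1,l_2,l_3)}$. None of this can be replaced by the monodromy equivalence you posit.
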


This paper is organized as follows. In Section 2, we establish the connection between the curvature
equation (\ref{mean}) and GLE (\ref{GLE-0}).
 Theorem \ref{no-unitary} will be proved by
applying the spectral theory of Hill's equations with complex-valued potentials
\cite{GW}; see Section \ref{general-lame}, where we will
prove a more general result (see Theorem \ref{thm4}) which contains Theorem
\ref{no-unitary} as a special case. In Section \ref{int-sys}, we apply Theorem \ref{thm4} to prove a more general
result for equation (\ref{mean}) (see Theorem \ref{thm6}) which contains
Theorem \ref{no-even-solution} as a special case. In Section \ref{real-distinct}, we prove the sufficient part of Theorem \ref{sharp-realroot11}. The necessary part will be a consequence of our results as explained in Section \ref{int-sys}. In Section \ref{premodularform}, we will apply Theorem \ref{no-sol-n} to study pre-modular forms introduced in \cite{CLW2}. In Appendix A, we briefly review the spectral theory of Hill's equation from \cite{GW} that are needed in Section \ref{general-lame}.

\section{From PDE to ODE with unitary monodromy}

\label{PDE-unitary}

The purpose of this section is to establish the connection between the curvature equation (\ref{mean}) and
GLE (\ref{GLE-0}) from the viewpoint of the integrable system. See \cite{CLW}
for a complete discussion for the special case $n_{1}=n_{2}=n_{3}=0$, i.e.
equation (\ref{eq1}). Our argument of proving the unitary monodromy is different from \cite{CLW} and works for the general case
\begin{equation}
\Delta u+e^{u}=4\pi \sum_{k=1}^{m}\alpha_{k}\delta_{a_k}\text{ on
}E_{\tau}, \label{mean-multiple}%
\end{equation}
where $a_k$'s are $m$ different points on $E_{\tau}$, $\alpha_{k}>-1$ for all $k$ and $\sum \alpha_{k}>0$.

The Liouville theorem says that for any solution $u(z)$ of (\ref{mean-multiple}), there
is a local meromorphic function $f(z)$ away from $\{a_k\}$'s such that%
\begin{equation}
u(z)=\log \frac{8|f^{\prime}(z)|^{2}}{(1+|f(z)|^{2})^{2}}. \label{502}%
\end{equation}
We remark that the classical Liouville theorem holds only for the case when
the domain is simply connected and the equation does not include any
singularity. For our present case, see \cite{CLW} for a proof.

This $f(z)$ is called a developing map. By differentiating (\ref{502}), we
have
\begin{equation}
u_{zz}-\frac{1}{2}u_{z}^{2}= \{ f;z \}:=\left(  \frac{f^{\prime \prime}%
}{f^{\prime}}\right)  ^{\prime}-\frac{1}{2}\left(  \frac{f^{\prime \prime}%
}{f^{\prime}}\right)  ^{2}. \label{new22}%
\end{equation}
Conventionally, the RHS of this identity is called the Schwarzian derivative
of $f(z)$, denoted by $\{ f;z \}$. Note that outside
the singularities $\{ a_k\,|\,1\leq
k\leq m\}+\Lambda_{\tau}$,
\[
\left(  u_{zz}-\tfrac{1}{2}u_{z}^{2}\right)  _{\bar{z}}=\left(  u_{z\bar{z}%
}\right)  _{z}-u_{z}u_{z\bar{z}}=-\tfrac{1}{4}\left(  e^{u}\right)
_{z}+\tfrac{1}{4}e^{u}u_{z}=0.
\]
Furthermore, using the local behavior of $u(z)$ at $a_k$: $u(z)=2\alpha_k\ln|z-a_k|+O(1)$,
we see that $u_{zz}-\frac{1}{2}u_{z}^{2}$ has at most double poles at each $a_k$. In conclusion, $u_{zz}-\frac{1}{2}u_{z}^{2}$ is an \emph{elliptic function} with at most \emph{double poles} at $\{ a_k\,|\,1\leq
k\leq m\}+\Lambda_{\tau}$. Denote
\begin{equation}\label{Iz}I(z):=-\tfrac{1}{2}(u_{zz}-\tfrac{1}{2}u_{z}^{2}),\end{equation}
and consider the
Fuchsian type linear differential equation
\begin{equation}\label{ODE}
y''(z)=I(z)y(z).
\end{equation}
Since $\{f;z\}=-2I(z)$, a classical result says that there exist linearly independent solutions $y_1(z), y_2(z)$ such that
\begin{equation}\label{fy}f(z)=\frac{y_1(z)}{y_2(z)}.\end{equation}

On the other hand,
the monodromy representation of
(\ref{ODE}) is a group homomorphism $\rho(\cdot):\pi_{1}(E_{\tau
}\setminus (\{ a_k\,|\,1\leq
k\leq m\}+\Lambda_{\tau}))\rightarrow SL(2,\mathbb{C})$. In general, the monodromy of such linear differential equation could be very complicate. The main result of this section is the following.

\begin{theorem}\label{general-unitary} If equation (\ref{ODE}) comes from a solution $u(z)$ of (\ref{mean-multiple}) via (\ref{Iz}), then the monodromy group with respect to the linearly independent solutions $(y_1(z), y_2(z))$ is contain in $SU(2)$, i.e. the monodromy is unitary.
\end{theorem}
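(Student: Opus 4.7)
The plan is to exploit the conformal-geometric content of the Liouville formula (\ref{502}): up to a positive multiplicative constant, $e^{u}\,dx\,dy$ is the pullback under the developing map $f$ of the Fubini--Study metric on $\mathbb{CP}^{1}$. Since the Möbius transformations that preserve the Fubini--Study metric form exactly $PSU(2)$, single-valuedness of $u$ forces the monodromy of $f$ into $PSU(2)$, and this will lift to unitary monodromy of the linear ODE (\ref{ODE}).

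First I would fix a regular base point $z_{0}\in E_{\tau}\setminus(\{a_{k}\}+\Lambda_{\tau})$ and a simply-connected neighborhood $U\ni z_{0}$. On $U$, the Liouville representation (\ref{502}) supplies a meromorphic developing map $f$; by (\ref{fy}) it factors as $f=y_{1}/y_{2}$ with $(y_{1},y_{2})$ a basis of solutions of (\ref{ODE}). Because (\ref{ODE}) has no first-order term, the Wronskian $W(y_{1},y_{2})$ is a nonzero constant, which I normalize once and for all. For any loop $\gamma$ based at $z_{0}$, analytic continuation acts as $(y_{1},y_{2})^{T}\mapsto M_{\gamma}(y_{1},y_{2})^{T}$ with $M_{\gamma}\in SL(2,\mathbb{C})$ (conservation of the Wronskian gives $\det M_{\gamma}=1$), inducing the Möbius transformation $\tilde{f}=\frac{(M_{\gamma})_{11}f+(M_{\gamma})_{12}}{(M_{\gamma})_{21}f+(M_{\gamma})_{22}}$ on $f$.

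Since $u$ is single-valued on $E_{\tau}\setminus(\{a_{k}\}+\Lambda_{\tau})$, the continued $\tilde{f}$ is again a developing map for the same $u$ on $U$, so applying (\ref{502}) to both $f$ and $\tilde{f}$ yields
\[
\frac{|\tilde{f}'|^{2}}{(1+|\tilde{f}|^{2})^{2}}=\frac{|f'|^{2}}{(1+|f|^{2})^{2}}.
\]
A direct computation (equivalently, the fact that the isometry group of the Fubini--Study sphere is $PSU(2)$) characterizes the Möbius transformations satisfying this functional equation as precisely those of the form $w\mapsto\frac{aw+b}{-\bar{b}w+\bar{a}}$ with $|a|^{2}+|b|^{2}=1$. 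Thus $M_{\gamma}$ is an $SU(2)$-matrix up to an overall scalar, and $\det M_{\gamma}=1$ together with $|a|^{2}+|b|^{2}=1$ forces $M_{\gamma}\in\pm SU(2)=SU(2)$ (because $-I\in SU(2)$). Since $\gamma\mapsto M_{\gamma}$ is a homomorphism, the whole monodromy group of $(y_{1},y_{2})$ lies in $SU(2)$.

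The main obstacle is the middle step: showing that a Möbius map satisfying the displayed identity must be in $PSU(2)$, and lifting that relation cleanly to the $SL(2,\mathbb{C})$-matrix $M_{\gamma}$ compatibly with the normalization $\det M_{\gamma}=1$. A secondary point that needs care is that two local Liouville developing maps for the same $u$ differ by a $PSU(2)$-transformation, and likewise the linear lift $(y_{1},y_{2})$ is determined only up to simultaneous scaling; these ambiguities do not disturb the conclusion because they produce $SU(2)$-conjugate representations and the Wronskian normalization absorbs the remaining scalar freedom.
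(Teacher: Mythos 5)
Your proposal is correct, and it rests on the same underlying mechanism as the paper's proof --- single-valuedness of $u$ in the Liouville formula (\ref{502}) forces the monodromy to preserve the spherical metric --- but you execute it projectively where the paper works linearly. The paper substitutes $f=y_{1}/y_{2}$ into (\ref{502}) and, using $f'=W/y_{2}^{2}$ with $W$ the constant Wronskian, obtains the single identity $2\sqrt{2}\,W e^{-u(z)/2}=|y_{1}(z)|^{2}+|y_{2}(z)|^{2}$; since the left-hand side is single-valued, every monodromy matrix preserves the standard Hermitian form along the projectively nondegenerate curve $z\mapsto(y_{1}(z),y_{2}(z))$, hence lies in $U(2)$, and $\det=1$ gives $SU(2)$ at once. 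You instead descend to the induced M\"obius transformation, invoke the classification of holomorphic isometries of the Fubini--Study metric as $PSU(2)$, and then lift back to $SL(2,\mathbb{C})$. Both of the obstacles you flag are genuine but surmountable exactly as you indicate: the functional equation for the M\"obius map $T$ holds on the open image of the nonconstant $f$ (nonconstant because $f'=W/y_{2}^{2}\neq0$) and hence everywhere by real-analyticity, so $T\in PSU(2)$; and $M_{\gamma}=\lambda U$ with $U\in SU(2)$ and $\det M_{\gamma}=1$ forces $\lambda=\pm1$, both of which land in $SU(2)$. What the paper's version buys is the avoidance of both of these steps --- no isometry classification and no lift --- at the cost of one short explicit computation; what your version buys is that it makes the geometric content (the developing map is a local isometry onto the round sphere, so its deck transformations must be rotations) completely transparent.
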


\begin{proof}
Define the Wronskian
\[W=y_{1}'(z)y_2(z)-y_1(z)y_2'(z).\]
Then $W$ is a nonzero constant. By inserting (\ref{fy}) into (\ref{502}), a direct computation leads to
\[2\sqrt{2}W e^{-\frac{1}{2}u(z)}=|y_1(z)|^2+|y_2(z)|^2.\]
Since $u(z)$ is single-valued and doubly periodic, we immediately see that the monodromy group with respect to $(y_1(z), y_2(z))$ is contained in $SU(2)$, namely the monodromy is unitary.
\end{proof}

Now we apply Theorem \ref{general-unitary} to the curvature equation (\ref{mean}). Suppose (\ref{mean}) has an \emph{even} solution $u(z)$, i.e. $u(z)=u(-z)$. Then the previous argument shows that $u_{zz}-\frac{1}{2}u_{z}^{2}$ is an \textit{even elliptic function}
with singularities only at $E_{\tau}[2]:=\{\frac{\omega_{k}}{2}\,|\,0\leq k\leq 3\}+\Lambda_{\tau}$. By using the local
behaviors of $u(z)$ near $\frac{\omega_{k}}{2}$: $u(z)=4n_{k}\log
|z-\frac{\omega_{k}}{2}|+O(1)$, it is easy to prove that
\begin{equation}
u_{zz}-\frac{1}{2}u_{z}^{2}=-2\bigg[  \sum_{k=0}^{3}n_{k}(n_{k}+1)\wp
(z+\tfrac{\omega_{k}}{2};\tau)+E\bigg]  =:-2I(z), \label{cc-1}%
\end{equation}
where $E$ is some constant, because due to the evenness, $u_{zz}-\frac{1}%
{2}u_{z}^{2}$ has no residues at $z\in E_{\tau}[2]$. Therefore, (\ref{ODE}) becomes the
following GLE
\begin{equation}
y^{\prime \prime}(z)=I(z)y(z)=\bigg[  \sum_{k=0}^{3}n_{k}(n_{k}+1)\wp
(z+\tfrac{\omega_{k}}{2};\tau)+E\bigg]  y(z). \label{GLE-2}%
\end{equation}
We call that \textit{GLE (\ref{GLE-2}) comes from the curvature equation
(\ref{mean}) if the potential $I(z)$ is given by an even solution $u(z)$ of
(\ref{mean}) via (\ref{cc-1})}. Since $n_k\in\mathbb{Z}_{\geq 0}$, we will see in Section \ref{general-lame} that the local monodromy matrix of GLE
(\ref{GLE-2}) at $\frac{\omega_{k}}{2}$ is $I_{2}$, so the developing map
$f(z)=y_{1}(z)/y_{2}(z)$ is single-valued near each $\frac{\omega_{k}}{2}$ and
then can be extended to be an entire meromorphic function in $\mathbb{C}$.
Applying Theorem \ref{general-unitary}, we have
the following result.

\begin{theorem}
\label{thm5} If the curvature equation (\ref{mean}) has an even solution,
then there exists $E\in \mathbb{C}$ such that the monodromy representation of
GLE (\ref{GLE-2}) is unitary.
\end{theorem}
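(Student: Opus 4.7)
The plan is to deduce Theorem \ref{thm5} directly from Theorem \ref{general-unitary}, once the evenness assumption is used to identify the Fuchsian equation (\ref{ODE}) arising from $u$ with the GLE (\ref{GLE-2}).

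Given an even solution $u$ of (\ref{mean}), the function $u_{zz}-\tfrac{1}{2}u_z^2$ is, by the general computation in the preceding discussion, an even elliptic function with at most double poles at $E_\tau[2]$. In the standard elliptic partial-fraction decomposition of such a function in terms of $\wp(z+\omega_k/2;\tau)$, $\wp'(z+\omega_k/2;\tau)$, $\zeta(z+\omega_k/2;\tau)$, and constants, the evenness condition is especially restrictive: both the $\wp'$-type and $\zeta$-type terms are odd about each half-period and must drop out. Matching the coefficient of the double-pole $1/(z-\omega_k/2)^2$ against the known local behavior $u(z)=4n_k\log|z-\omega_k/2|+O(1)$ fixes the $\wp(z+\omega_k/2;\tau)$ coefficient as $-2n_k(n_k+1)$, leaving only an unknown additive constant $-2E$. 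This yields exactly (\ref{cc-1}), so that the linear equation (\ref{ODE}) coming from $u$ coincides with the GLE (\ref{GLE-2}) for some $E\in\mathbb{C}$.

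Equation (\ref{mean}) is the special case of (\ref{mean-multiple}) with $m=4$, $a_k=\omega_k/2$, and $\alpha_k=2n_k$. Theorem \ref{general-unitary} therefore applies and produces linearly independent solutions $(y_1,y_2)$ of GLE (\ref{GLE-2}) whose monodromy representation of $\pi_1(E_\tau\setminus E_\tau[2])$ takes values in $SU(2)$. Because $n_k\in\mathbb{Z}_{\geq 0}$, the Frobenius analysis promised in Section \ref{general-lame} shows that the local monodromy matrix at each regular singular point $\omega_k/2$ equals $I_2$, so the monodromy representation is effectively generated by the two translation loops along $\omega_1$ and $\omega_2$ --- and both lie in $SU(2)$.

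The only step with real content is the derivation of (\ref{cc-1}): this is where the even hypothesis on $u$ genuinely enters, and it amounts to matching principal parts of elliptic functions. I do not anticipate further obstacles, since once the ODE is identified the unitarity of the monodromy is a direct application of Theorem \ref{general-unitary}.
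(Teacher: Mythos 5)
Your proposal is correct and follows essentially the same route as the paper: use evenness to kill the residues at the $2$-torsion points and match the double-pole coefficients via the local behavior $u(z)=4n_k\log|z-\tfrac{\omega_k}{2}|+O(1)$ to obtain (\ref{cc-1}), then invoke Theorem \ref{general-unitary} together with the triviality of the local monodromies (so the representation descends to $\pi_1(E_\tau)$ and is generated by the two translation loops, which lie in $SU(2)$). No substantive differences from the paper's argument.
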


\begin{remark}
\label{rm} Actually the converse statement of Theorem \ref{thm5} is also true,
i.e. if there exists $E\in \mathbb{C}$ such that the monodromy representation
of GLE (\ref{GLE-2}) is unitary, then (\ref{mean}) has even solutions. Since
this statement is not needed in proving the results of this paper and its
proof is much more delicate and longer than that of Theorem \ref{thm5}, we
would like to postpone it in a future work.
\end{remark}

\section{GLE and finite-gap potential}

\label{general-lame}

The purpose of this section is to prove a more general result (see Theorem
\ref{thm4} below) which contains Theorem \ref{no-unitary} as a consequence.
To state Theorem
\ref{thm4}, we need to recall some basic facts about the monodromy theory of the generalized
Lam\'{e} equation (GLE)
\begin{equation}
y^{\prime \prime}(z)=I(z;E)y(z)=\bigg[  \sum_{k=0}^{3}n_{k}(n_{k}+1)\wp \left(
z+\tfrac{\omega_{k}}{2};\tau \right)  +E\bigg]  y(z), \label{GLE-1}%
\end{equation}
where $n_{k}\in \mathbb{Z}_{\geq 0}$ for all $k$ and $\max_{k}n_{k}\geq
1$.
Since the local exponents of GLE (\ref{GLE-1}) at $\frac{\omega_{k}}{2}$
are $-n_{k}$ and $n_{k}+1$ and the potential $I(\cdot;E)$ is even elliptic, it
is easy to prove (cf. \cite{GW1} or \cite[Proposition 3.4]{Tak1}) that the
local monodromy matrix of GLE (\ref{GLE-1}) at $\frac{\omega_{k}}{2}$ is the
unit matrix $I_{2}$. Therefore, the monodromy representation of GLE
(\ref{GLE-1}) is a group homomorphism $\rho(\cdot;E):\pi_{1}(E_{\tau
})\rightarrow SL(2,\mathbb{C})$. Let $\ell_{j}\in \pi_{1}(E_{\tau})$, $j=1,2$,
be the two fundamental cycles of $E_{\tau}$ such that $\ell_{1}\ell_{2}%
\ell_{1}^{-1}\ell_{2}^{-1}=Id$. Then%
\begin{equation}
\rho(\ell_{1};E)\rho(\ell_{2};E)=\rho(\ell_{2};E)\rho(\ell_{1};E), \label{ab}%
\end{equation}
where $\rho(\ell_{j};E)$ denotes the monodromy matrix of GLE (\ref{GLE-1})
with respect to any pair of linearly independent solutions. That is, the
monodromy representation of GLE (\ref{GLE-1}) is always \textit{abelian} and
hence \textit{reducible}. Consequently, there exists a solution $y_{1}%
(z)=y_{1}(z;E)$ of GLE (\ref{GLE-1}) such that $y_{1}(z;E)$ is a common
eigenfunction of $\rho(\ell_{1};E)$ and $\rho(\ell_{2};E)$:
\begin{equation}
y_{1}(z+1;E)=e^{\pi i\theta_{1}(E)}y_{1}(z;E),\quad y_{1}(z+\tau;E)=e^{\pi
i\theta_{2}(E)}y_{1}(z;E), \label{mono1}%
\end{equation}
where $\theta_{j}(E)\in \mathbb{C}$ are some constants, i.e. $y_{1}(z;E)$ is
elliptic of the second kind. Since $I(-z;E)=I(z;E)$ implies that
$y_{2}(z)=y_{2}(z;E):=y_{1}(-z;E)$ is also a solution of the same GLE
(\ref{GLE-1}) and also a common eigenfunction of $\rho(\ell_{1};E)$ and
$\rho(\ell_{2};E)$:
\begin{equation}
y_{2}(z+1;E)=e^{-\pi i\theta_{1}(E)}y_{2}(z;E),\quad y_{2}(z+\tau;E)=e^{-\pi
i\theta_{2}(E)}y_{2}(z;E), \label{mono2}%
\end{equation}
we conclude that $\Phi(z;E):=y_{1}(z;E)y_{2}(z;E)=y_{1}(z;E)y_{1}(-z;E)$ is an
\textit{even elliptic} solution of the second symmetric product equation of
GLE (\ref{GLE-1}):
\begin{equation}
\Phi^{\prime \prime \prime}(z)-4I(z;E)\Phi^{\prime}(z)-2I^{\prime}%
(z;E)\Phi(z)=0. \label{secondsymm}%
\end{equation}
On the other hand, it was proved in \cite[Proposition 2.9]{Tak-MathZ} that
\textit{the dimension of the even elliptic solutions of equation
(\ref{secondsymm}) is $1$}. Together this with \cite[Proposition 3.5]{Tak1},
we immediately obtain

\begin{lemma}
\label{lemma1} Up to a multiplication, $\Phi(z;E)=y_{1}(z;E)y_{1}(-z;E)$ have
the following expression:
\begin{equation}
\Phi(z;E)=c_{0}(E)+\sum_{k=0}^{3}\sum_{j=0}^{n_{k}-1}b_{j}^{(k)}%
(E)\wp(z+\tfrac{\omega_{k}}{2};\tau)^{n_{k}-j},
\end{equation}
where the coefficients $c_{0}(E)$ and $b_{j}^{(k)}(E)$ are polynomials in $E$,
they do not have common divisors, and $c_{0}(E)$ is monic. Set $g=\deg
c_{0}(E)$, then $\deg b_{j}^{(k)}(E)<g$ for all $k$ and $j$.
\end{lemma}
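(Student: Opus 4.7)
The plan is to combine the structural form of even elliptic functions with the one-dimensionality of even elliptic solutions of (\ref{secondsymm}) to obtain the explicit expansion, the polynomial dependence on $E$, and the degree bound all at once.

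First, I would analyze the singularity structure of $\Phi(z;E) = y_1(z;E)y_1(-z;E)$. Since the local exponents of GLE (\ref{GLE-1}) at $\omega_k/2$ are $-n_k$ and $n_k+1$, any solution $y_1(z;E)$ has a Laurent expansion at $\omega_k/2$ of order at least $-n_k$, so $\Phi(z;E)$ has a pole of order at most $2n_k$ at each $\omega_k/2$ and no other poles modulo the lattice. Moreover $\Phi$ is manifestly even and elliptic. A standard fact is that every such even elliptic function admits a unique decomposition
\[
\Phi(z) = c_0 + \sum_{k=0}^{3}\sum_{j=0}^{n_k-1} b_j^{(k)}\, \wp\!\left(z+\tfrac{\omega_k}{2};\tau\right)^{n_k-j},
\]
obtained by peeling off principal parts in decreasing pole order and using that $\wp(z+\omega_k/2;\tau)^{n_k-j}$ has a pole of exact order $2(n_k-j)$ at $\omega_k/2$ and is regular at $\omega_l/2$ for $l\ne k$ (modulo the lattice). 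This already yields the form claimed in the lemma with scalar coefficients $c_0(E)$ and $b_j^{(k)}(E)$.

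Second, to establish that these coefficients depend polynomially on $E$, I would substitute the above ansatz into the second symmetric product equation (\ref{secondsymm}) and equate coefficients against the basis $\{1,\wp(z+\omega_k/2;\tau)^r\}$. This produces a linear system whose matrix entries are polynomials in $E$; the dimension-one statement \cite[Proposition 2.9]{Tak-MathZ} guarantees that its kernel is one-dimensional for every $E\in\mathbb{C}$. Cramer's rule then expresses each coefficient as a rational function of $E$ with a common denominator, and after clearing denominators and dividing out the gcd of the resulting polynomials, one may take them without common divisors and normalize $c_0(E)$ to be monic.

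The delicate point is the degree bound $\deg b_j^{(k)} < \deg c_0 = g$, which is exactly what \cite[Proposition 3.5]{Tak1} supplies. The strategy there is an asymptotic analysis of $\Phi(z;E)$ as $|E|\to\infty$: one shows that the leading $E$-growth is carried entirely by the constant term $c_0$, while every Laurent principal part at $\omega_k/2$ contributes at strictly lower order in $E$. The main obstacle is thus not the algebraic structure, which is classical, but this asymptotic control of the coefficients; once these ingredients from \cite{Tak1, Tak-MathZ} are combined, the lemma follows immediately.
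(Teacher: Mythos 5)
Your proposal is correct and follows essentially the same route as the paper: the paper's proof is precisely the combination of \cite[Proposition 2.9]{Tak-MathZ} (one-dimensionality of the even elliptic solutions of (\ref{secondsymm})) with \cite[Proposition 3.5]{Tak1} (the explicit expansion, polynomial dependence on $E$, and the degree bound), and your singularity analysis, linear-system/Cramer's-rule argument, and deferral of the degree estimate to Takemura are a faithful unpacking of exactly those two ingredients. The only cosmetic imprecision is that the expression $\Phi'''-4I\Phi'-2I'\Phi$ is an \emph{odd} elliptic function, so the coefficient comparison is most naturally done against Laurent expansions at the half-periods (or a basis involving $\wp'$) rather than against $\{1,\wp^r\}$; this does not affect the argument.
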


Define
\[
W(E):=y_{1}^{\prime}(z;E)y_{2}(z;E)-y_{1}(z;E)y_{2}^{\prime}(z;E)
\]
to be the Wronskian of $y_{1}(z;E)$ and $y_{2}(z;E)$. Clearly $W(E)$ is a
constant independent of $z$. It is easy to see that
\[
\frac{y_{1}^{\prime}(z;E)}{y_{1}(z;E)}=\frac{\Phi^{\prime}(z;E)+W(E)}%
{2\Phi(z;E)},\quad \frac{y_{2}^{\prime}(z;E)}{y_{2}(z;E)}=\frac{\Phi^{\prime
}(z;E)-W(E)}{2\Phi(z;E)},
\]
which implies
\begin{align*}
\frac{\Phi^{\prime \prime}(z;E)}{2\Phi(z;E)}-\frac{\Phi^{\prime}(z;E)+W(E)}%
{2\Phi(z;E)^{2}}\Phi^{\prime}(z;E)  &  =\left(  \frac{y_{1}^{\prime}%
(z;E)}{y_{1}(z;E)}\right)  ^{\prime}\\
&  =I(z;E)-\left(  \frac{\Phi^{\prime}(z;E)+W(E)}{2\Phi(z;E)}\right)  ^{2},
\end{align*}
and
\[
\frac{\Phi^{\prime \prime}(z;E)}{2\Phi(z;E)}-\frac{\Phi^{\prime}(z;E)-W(E)}%
{2\Phi(z;E)^{2}}\Phi^{\prime}(z;E)=I(z;E)-\left(  \frac{\Phi^{\prime
}(z;E)-W(E)}{2\Phi(z;E)}\right)  ^{2}.
\]
From here we immediately obtain
\begin{equation}
\frac{W(E)^{2}}{4}=I(z;E)\Phi(z;E)^{2}+\frac{\Phi^{\prime2}}{4}-\frac
{\Phi(z;E)\Phi^{\prime \prime}(z;E)}{2}. \label{Wron}%
\end{equation}
Remark that (\ref{Wron}) is well known (cf. \cite{GW1,Tak1}) and the fact that
the RHS of (\ref{Wron}) is independent of $z$ can be also seen from equation
(\ref{secondsymm}). Define
\begin{align}
Q(E)=  &  Q(E;\tau)=Q^{(n_{0},n_{1},n_{2},n_{3})}(E;\tau)
\nonumber \label{poly}\\
:=  &  I(z;E)\Phi(z;E)^{2}+\frac{\Phi^{\prime2}}{4}-\frac{\Phi(z;E)\Phi
^{\prime \prime}(z;E)}{2}.
\end{align}
Then it follows from the expression of $I(z;E)$ and Lemma \ref{lemma1} that
$Q(E)$ is \textit{a monic polynomial of degree $2g+1$}. This polynomial $Q(E)$
is known as \textit{the special polynomial of the Treibich-Verdier potential
}(cf. \cite{GW1,Tak1}) and will play a crucial role in this paper. The number
$g$, i.e. the arithmetic genus of the hyperelliptic curve $F^{2}=Q(E)$, was
computed in \cite{GW1, Tak5}: Let $m_{k}$ be the rearrangement of $n_{k}$ such
that $m_{0}\geq m_{1}\geq m_{2}\geq m_{3}$, then
\begin{equation}
g=%
\begin{cases}
m_{0}, & \text{if $\sum m_{k}$ is even and $m_{0}+m_{3}\geq m_{1}+m_{2}$};\\
\frac{m_{0}+m_{1}+m_{2}-m_{3}}{2}, & \text{if $\sum m_{k}$ is even and
$m_{0}+m_{3}<m_{1}+m_{2}$};\\
m_{0}, & \text{if $\sum m_{k}$ is odd and $m_{0}>m_{1}+m_{2}+m_{3}$};\\
\frac{m_{0}+m_{1}+m_{2}+m_{3}+1}{2}, & \text{if $\sum m_{k}$ is odd and
$m_{0}\leq m_{1}+m_{2}+m_{3}$}.
\end{cases}
\label{genus}%
\end{equation}
Furthermore, it is known (cf. \cite{GW1,Tak5}) that the roots of $Q(\cdot
;\tau)=0$ are \textit{distinct} for generic $\tau \in \mathbb{H}$.

We summarize the above argument in the following

\begin{lemma}
\label{lemma2} The Wronskian $W(E)$ of $y_{1}(z;E), y_{2}(z;E)=y_{1}(-z;E)$
satisfies
\[
(W(E)/2)^{2}=Q(E),
\]
where $Q(E)$ is a monic polynomial of degree $2g+1$, defined by (\ref{poly})
with $g$ given by (\ref{genus}). Furthermore,

\begin{itemize}
\item[(1)] if $Q(E)\neq0$, then the monodromy group of GLE (\ref{GLE-1}) with
respect to $(y_{1}(z;E), y_{2}(z;E))$ is generated by
\begin{equation}
\rho(\ell_{1}; E)=%
\begin{pmatrix}
e^{\pi i\theta_{1}(E)} & 0\\
0 & e^{-\pi i\theta_{1}(E)}%
\end{pmatrix}
,\; \; \rho(\ell_{2}; E)=%
\begin{pmatrix}
e^{\pi i\theta_{2}(E)} & 0\\
0 & e^{-\pi i\theta_{2}(E)}%
\end{pmatrix}
, \label{cc-30}%
\end{equation}
where $(\theta_{1}(E),\theta_{2}(E))$ is seen in (\ref{mono1})-(\ref{mono2}).
Besides, $(\theta_{1}(E),\theta_{2}(E))\notin \mathbb{Z}^{2}$.

\item[(2)] if $Q(E)=0$, then the dimension of the common eigenfunctions is $1$
and $(\theta_{1}(E),\theta_{2}(E))\in \mathbb{Z}^{2}$.

\item[(3)] the roots of $Q(\cdot;\tau)=0$ are distinct for generic $\tau
\in \mathbb{H}$.
\end{itemize}
\end{lemma}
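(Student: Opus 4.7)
The plan is to first derive $(W(E)/2)^{2}=Q(E)$ from the identity (\ref{Wron}) already in hand, determine the $E$-degree of $Q(E)$ via Lemma \ref{lemma1}, and then translate the dichotomy $Q(E)\neq 0$ versus $Q(E)=0$ into monodromy statements through the Wronskian. The equality $(W(E)/2)^{2}=Q(E)$ is read off directly from (\ref{Wron}), whose right-hand side is precisely $Q(E)$ by the definition (\ref{poly}). To see that $Q(E)$ is a monic polynomial of degree $2g+1$, I would substitute the expansion of $\Phi(z;E)$ from Lemma \ref{lemma1} into (\ref{poly}) and track the top-degree term in $E$. Since $I(z;E)$ is affine in $E$ with linear part $E$, and the $E$-degree of $\Phi(z;E)^{2}$ is exactly $2g$ with leading (in $E$) coefficient $c_{0}(E)^{2}$, the $E^{2g+1}$-contribution to $Q(E)$ comes only from $E\cdot c_{0}(E)^{2}$ inside $I\Phi^{2}$ (the terms $(\Phi')^{2}/4$ and $\Phi\Phi''/2$ have $E$-degree at most $2g$), yielding monicity.

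For part (1), $Q(E)\neq 0$ gives $W(E)\neq 0$, so $y_{1}(z;E)$ and $y_{2}(z;E)=y_{1}(-z;E)$ are linearly independent; together with (\ref{mono1})-(\ref{mono2}), this produces the diagonal monodromy matrices in (\ref{cc-30}). To show $(\theta_{1}(E),\theta_{2}(E))\notin\mathbb{Z}^{2}$, I would argue by contradiction: if both $\theta_{j}\in\mathbb{Z}$, then $e^{\pi i\theta_{j}}=e^{-\pi i\theta_{j}}=(-1)^{\theta_{j}}$, so $y_{1}$ and $y_{2}$ have identical quasi-periods on both cycles. Since they are linearly independent and span the solution space, both $\rho(\ell_{j};E)$ must then be the scalar matrix $\pm I_{2}$. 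But in that case $(y_{1}+y_{2})^{2}$ and $(y_{1}-y_{2})^{2}$ are linearly independent \emph{even elliptic} solutions of (\ref{secondsymm}) --- independent because $y_{1}\neq\pm y_{2}$, elliptic because the monodromy is scalar, and even because $y_{1}\pm y_{2}$ is even/odd in $z$ --- contradicting the one-dimensionality of even elliptic solutions of (\ref{secondsymm}) cited in the text from \cite{Tak-MathZ}.

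For part (2), $Q(E)=0$ gives $W(E)=0$, so $y_{1}(z;E)=c\,y_{1}(-z;E)$ for some constant $c$; applying $z\mapsto -z$ forces $c^{2}=1$, so $y_{1}$ is even or odd in $z$, and combining this parity with (\ref{mono1}) yields $e^{2\pi i\theta_{j}(E)}=1$, hence $(\theta_{1}(E),\theta_{2}(E))\in\mathbb{Z}^{2}$. The one-dimensionality of the common eigenspace then follows by excluding the simultaneously scalar monodromy case (which would again contradict Lemma \ref{lemma1} via the same $(y_{1}\pm y_{2})^{2}$ construction) and using the commutativity (\ref{ab}) of $\rho(\ell_{1};E)$ and $\rho(\ell_{2};E)$: if one of them has a nontrivial Jordan block, its one-dimensional eigenspace is preserved by the other, producing a unique common eigenfunction up to scaling. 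Assertion (3) is a classical consequence from the spectral theory of algebro-geometric potentials, directly cited from \cite{GW1,Tak5}; alternatively, non-identical vanishing of $\mathrm{disc}\,Q(\cdot;\tau)$ in $\tau$ can be checked by a degeneration argument, e.g.\ $\mathrm{Im}\,\tau\to\infty$. The step I expect to be most delicate is the one-dimensionality claim in part (2), since ruling out the simultaneously scalar monodromy degenerate case depends crucially on the input from \cite{Tak-MathZ} recorded in Lemma \ref{lemma1}; everything else is essentially formal once $(W(E)/2)^{2}=Q(E)$ is in hand.
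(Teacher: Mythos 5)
Your proposal follows essentially the same route as the paper: the identity $(W(E)/2)^{2}=Q(E)$ and the degree count come straight from (\ref{Wron}), (\ref{poly}) and Lemma \ref{lemma1}, and the key to both (1) and (2) is the one-dimensionality of even elliptic solutions of (\ref{secondsymm}) from \cite{Tak-MathZ}, exactly as in the paper. Your part (1) variant (two independent even elliptic solutions $(y_{1}+y_{2})^{2}$ and $(y_{1}-y_{2})^{2}$ in the scalar-monodromy case) is correct and works, as does the Jordan-block/commutativity reduction in part (2). The one slip is in part (2): there $y_{2}=\pm y_{1}$, so the literal ``same $(y_{1}\pm y_{2})^{2}$ construction'' degenerates (one of the two squares vanishes identically); to exclude the simultaneously scalar case you must instead take a common eigenfunction $y$ independent of $y_{1}$ and argue, as the paper does, that $y(z)y(-z)$ is even elliptic, hence equals $cy_{1}(z)^{2}$, which forces $y\propto y_{1}$ --- a contradiction. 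This is an immediate repair, not a conceptual gap.
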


\begin{proof}
It suffices for us to prove the assertions (1)-(2).

(1) Suppose $Q(E)\neq0$, then $y_{1}(z;E)$ and $y_{2}(z;E)$ are linearly
independent and hence (\ref{cc-30}) follows from (\ref{mono1})-(\ref{mono2}).
Assume by contradiction that $(\theta_{1}(E),\theta_{2}(E))\in \mathbb{Z}^{2}$,
i.e. $e^{\pi i \theta_{j}(E)}=e^{-\pi i \theta_{j}(E)}\in \{ \pm1\}$ for
$j=1,2$. Let $y(z)=y_{1}(z;E)+y_{2}(z;E)$ be a solution of GLE (\ref{GLE-1}). Then it
follows from (\ref{mono1})-(\ref{mono2}) that $y(z)y(-z)$ is an even elliptic
solution of (\ref{secondsymm}). Again by
\cite[Proposition 2.9]{Tak-MathZ} that the dimension of even elliptic solutions of
(\ref{secondsymm}) is $1$, we conclude
$y(z)y(-z)=cy_{1}(z;E)y_{2}(z;E)$ for some constant $c\neq0$, i.e. either
$y(z)=c_{1}y_{1}(z;E)$ or $y(z)=c_{1}y_{2}(z;E)$ with some constant
$c_{1}\neq0$, a
contradiction with $y(z)=y_{1}(z;E)+y_{2}(z;E)$. This proves $(\theta_{1}(E), \theta_{2}(E))\notin
\mathbb{Z}^{2}$.

(2) Suppose $Q(E)=0$, then $y_{1}(z;E)$ and $y_{2}(z;E)=y_{1}(-z;E)$ are
linearly dependent and hence (\ref{mono1})-(\ref{mono2}) imply $e^{\pi i
\theta_{j}(E)}=e^{-\pi i \theta_{j}(E)}$ for $j=1,2$, i.e. $(\theta
_{1}(E),\theta_{2}(E))\in \mathbb{Z}^{2}$. If there exists another common
eigenfunction $y(z)$ which is linearly independent with $y_{1}(z;E)$, then the
same argument as (1) shows $y(z)y(-z)=cy_{1}(z;E)^{2}$, clearly a
contradiction. This proves that the dimension of the common eigenfunctions is
$1$, i.e. the monodormy matrix $\rho(\ell_{1}; E)$ and $\rho(\ell_{2}; E)$ can
not be diagonized simultaneously.
\end{proof}

By Lemma \ref{lemma2}, it is easy to see that the following corollary holds.

\begin{corollary}
\label{coro} The monodromy representation of GLE (\ref{GLE-1}) is
\textit{unitary}, i.e. the monodromy group is contained in $SU(2)$ up to a
common conjugation, if and only if $Q(E)\neq0$ and $(\theta_{1}(E), \theta
_{2}(E))\in \mathbb{R}^{2}\setminus \mathbb{Z}^{2}$.
\end{corollary}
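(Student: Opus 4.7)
The plan is to read the corollary off Lemma \ref{lemma2}, using the commutativity relation (\ref{ab}) of the monodromy. I would split on whether $Q(E)=0$ or $Q(E)\neq 0$ and treat the two implications independently.

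For the forward implication, assume there exists $P\in GL(2,\mathbb{C})$ with $P^{-1}\rho(\ell_j;E)P\in SU(2)$ for $j=1,2$. The conjugates still commute, and commuting elements of $SU(2)$ are simultaneously diagonalizable; so if $Q(E)=0$ held, this would contradict Lemma \ref{lemma2}(2), which asserts that $\rho(\ell_1;E)$ and $\rho(\ell_2;E)$ admit no simultaneous diagonalization. Hence $Q(E)\neq 0$, and in the basis $(y_1(z;E), y_2(z;E))$ Lemma \ref{lemma2}(1) presents the monodromy matrices as diagonal with entries $e^{\pm i\pi\theta_j(E)}$. Since eigenvalues are invariant under conjugation, unitarity forces $|e^{\pm i\pi\theta_j(E)}|=1$, i.e. $\theta_j(E)\in\mathbb{R}$; the non-integrality $(\theta_1(E),\theta_2(E))\notin\mathbb{Z}^2$ then comes for free from Lemma \ref{lemma2}(1).

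For the backward implication, the hypothesis $Q(E)\neq 0$ together with Lemma \ref{lemma2}(1) exhibits the monodromy in the basis $(y_1,y_2)$ as diagonal matrices with entries $e^{\pm i\pi\theta_j(E)}$. Reality of $\theta_j(E)$ makes these entries unimodular, placing the matrices in $SU(2)$ directly, with no additional change of basis required.

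There is no substantial obstacle: the essential content is already packaged into Lemma \ref{lemma2}. The only non-trivial external ingredient is the elementary linear-algebra fact that any two commuting unitary matrices are simultaneously diagonalizable, which is deployed solely to exclude the case $Q(E)=0$ in the forward direction.
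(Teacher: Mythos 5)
Your proposal is correct and follows the route the paper intends: the paper gives no separate proof of Corollary \ref{coro}, asserting only that it is immediate from Lemma \ref{lemma2}, and your argument is exactly the natural unpacking of that claim (diagonal unimodular matrices lie in $SU(2)$ for the backward direction; eigenvalue-modulus invariance plus the fact that commuting unitary matrices are simultaneously diagonalizable, contradicting Lemma \ref{lemma2}(2), for the forward direction). No gaps.
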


The main result of this section is as follows, which is interesting from the
viewpoint of the monodromy theory of linear ODEs. Theorem \ref{no-unitary}
will be a consequence of this result.

\begin{theorem}
\label{thm4} Let $\tau \in i\mathbb{R}_{>0}$ and $n_{k}\in \mathbb{Z}_{\geq 0}$
for all $k$ with $\max_k n_k\geq1$. Suppose that all zeros of $Q^{(n_{0}, n_{1},
n_{2}, n_{3})}(\cdot;\tau)$ are real and distinct. Then the monodromy
representation of GLE (\ref{GLE-1}) can not be unitary for any $E\in
\mathbb{C}$.
\end{theorem}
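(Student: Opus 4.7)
My plan is to combine the characterization of unitary monodromy in Corollary~\ref{coro} with the spectral theory of two self-adjoint Hill operators obtained from GLE~(\ref{GLE-1}) by restricting it to the real and to the imaginary axis. Under the hypothesis that $Q(\cdot;\tau)$ has $2g+1$ real distinct zeros, I will show that the two resulting ``$E$-spectra'' are complementary closed subsets of $\mathbb{R}$ whose only intersection is the zero set of $Q$, which will contradict the characterization of unitary monodromy.

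Because $\tau=ib\in i\mathbb{R}_{>0}$, the lattice $\Lambda_\tau$ is invariant under complex conjugation, so each $\wp(z+\omega_k/2;\tau)$ is real-valued for $z\in\mathbb{R}$ and each $\wp(iw+\omega_k/2;\tau)$ is real-valued for $w\in\mathbb{R}$. Restricting GLE~(\ref{GLE-1}) to $z\in\mathbb{R}$ gives a real Hill equation of period $1$ whose self-adjoint realization is $H_1:=-\partial_z^2+V(z)$, with $V(z):=\sum_k n_k(n_k+1)\wp(z+\omega_k/2;\tau)$ and spectral parameter $\lambda=-E$. The substitution $z=iw$ turns GLE~(\ref{GLE-1}) into the second real Hill equation $\tilde y''(w)=-(V(iw)+E)\tilde y(w)$ of period $b=-i\tau$, whose self-adjoint realization is $\tilde H_2:=-\partial_w^2-V(iw)$ in the spectral parameter $E$. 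Standard Floquet theory for real finite-gap Hill operators (recalled from~\cite{GW} in Appendix~A) then identifies
\[
\theta_1(E)\in\mathbb{R}\ \Longleftrightarrow\ E\in-\sigma(H_1)\subset\mathbb{R},\qquad
\theta_2(E)\in\mathbb{R}\ \Longleftrightarrow\ E\in\sigma(\tilde H_2)\subset\mathbb{R}.
\]

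Both $H_1$ and $\tilde H_2$ are finite-gap operators for the same hyperelliptic spectral curve $F^2=Q(E)$, hence have the same arithmetic genus $g$, and each has spectrum equal to a disjoint union of $g+1$ closed bands with exactly $2g+1$ endpoints. By Lemma~\ref{lemma2}(2) combined with the degree count $\deg Q=2g+1$, these endpoints in both cases coincide with $\{Q(E)=0\}$; by hypothesis this set consists of $2g+1$ real distinct numbers $E_0<E_1<\cdots<E_{2g}$. Since $H_1$ and $\tilde H_2$ are both semi-bounded below (their potentials are real with repulsive $(w-w_0)^{-2}$ singularities), $-\sigma(H_1)$ is bounded \emph{above} in $E$ while $\sigma(\tilde H_2)$ is bounded \emph{below} in $E$. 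Together with the common endpoint set, the band decompositions are then forced to be
\begin{align*}
-\sigma(H_1)&=(-\infty,E_0]\cup[E_1,E_2]\cup\cdots\cup[E_{2g-1},E_{2g}],\\
\sigma(\tilde H_2)&=[E_0,E_1]\cup[E_2,E_3]\cup\cdots\cup[E_{2g},+\infty),
\end{align*}
which are complementary in $\mathbb{R}$ and intersect exactly at $\{E_0,\dots,E_{2g}\}=\{Q=0\}$. If the monodromy of GLE~(\ref{GLE-1}) were unitary at some $E_*\in\mathbb{C}$, Corollary~\ref{coro} would give $Q(E_*)\neq 0$ together with $\theta_1(E_*),\theta_2(E_*)\in\mathbb{R}$, forcing $E_*\in(-\sigma(H_1))\cap\sigma(\tilde H_2)\subset\{Q=0\}$, a contradiction.

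I expect the main technical obstacle to be the rigorous verification that the semi-infinite bands of $-\sigma(H_1)$ and $\sigma(\tilde H_2)$ sit on \emph{opposite} sides of the real $E$-line, which is what makes the two band decompositions complementary rather than equal. The underlying mechanism is the sign flip between the potentials $V(z)$ and $-V(iw)$, coming from the identity $i^2=-1$ in the change of variables $z=iw$. A careful argument is also needed to handle the repulsive second-order poles of $V$ and $-V(iw)$ at the lattice-related points lying on the real axis and to promote the Floquet characterization of $\theta_j(E)\in\mathbb{R}$ from the self-adjoint theory on $L^2(\mathbb{R})$ to arbitrary complex spectral parameters; this is precisely what the Gesztesy--Weikard spectral theory recalled in Appendix~A supplies.
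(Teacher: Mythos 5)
Your overall strategy is exactly the one the paper uses: show that the two conditions $\theta_{1}(E)\in\mathbb{R}$ and $\theta_{2}(E)\in\mathbb{R}$ cut out two closed subsets of $\mathbb{R}$ that are complementary bands meeting only at the zero set of $Q$, and then invoke Corollary \ref{coro} (which requires $Q(E)\neq0$) to get a contradiction. You have also correctly identified the mechanism that makes the two band structures complementary rather than equal, namely the sign flip coming from $\tau^{2}<0$ (the paper implements this via the modular transformation $z\mapsto\tau z$, $\tau\mapsto-1/\tau$ in Lemma \ref{lemma3}, which is the same thing as your substitution $z=iw$).

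However, there is a genuine gap in the central identification $\theta_{1}(E)\in\mathbb{R}\Leftrightarrow E\in-\sigma(H_{1})$, and it is not merely a technicality to be smoothed over. Since we may assume $n_{0}=\max_{k}n_{k}\geq1$, the potential $V(z)=\sum_{k}n_{k}(n_{k}+1)\wp(z+\tfrac{\omega_{k}}{2};\tau)$ has second-order poles \emph{on} the real axis (at $z\in\mathbb{Z}$, and at $z\in\tfrac12+\mathbb{Z}$ if $n_{1}\geq1$), and likewise $-V(iw)$ has poles on the imaginary axis. For $n\geq1$ the coefficient $n(n+1)\geq2>3/4$ puts each singularity in the limit-point case, so the self-adjoint realization of $-\partial_{z}^{2}+V$ on $L^{2}(\mathbb{R})$ decomposes as a direct sum of operators on the bounded intervals between consecutive poles; its spectrum is therefore a discrete set of eigenvalues, not a union of bands, and it does not coincide with the conditional stability set $\{E:\Delta(E)\in[-2,2]\}$. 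The Rofe-Beketov identification of the two, and the whole real finite-gap band picture you appeal to, require a continuous potential. The only way to get a genuine Hill operator is to move the line off the singularities (the paper uses $z=x+\tfrac{\tau}{4}$ in Lemma \ref{lemma5}), but then the potential becomes \emph{complex-valued}, and the standard self-adjoint theory no longer gives you reality of the spectrum, the number of bands, or the location of the semi-infinite arc. Recovering your claimed band decompositions in that setting is the real content of the proof: one needs the Gesztesy--Weikard Floquet theory for complex potentials (Theorem B) to show the finite arc endpoints are exactly the zeros of $Q$ with odd multiplicity $d(E_{j})=1+2p_{i}(E_{j})$, the conjugation symmetry of $\tilde{\mathcal{S}}$ for $\tau\in i\mathbb{R}_{>0}$ (Lemma \ref{lemma4}), and the path-connectedness of $\mathbb{C}\setminus\sigma(H)$ to force the spectrum onto $\mathbb{R}$ and pin down $d(E_{j})=1$; the hypothesis that the zeros of $Q$ are real and distinct enters precisely here. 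As written, your proposal assumes the conclusion of this argument rather than supplying it.
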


The rest of this section is devoted to the proof of Theorem \ref{thm4}.
Recalling Lemma \ref{lemma2}, we have
\begin{equation}
\label{trace1}\operatorname{tr} \rho(\ell_{1}; E)=e^{\pi i \theta_{1}%
(E)}+e^{-\pi i \theta_{1}(E)}=2\cos(\pi \theta_{1}(E))\; \; \text{for all $E$}.
\end{equation}
Remark that $\operatorname{tr} \rho(\ell_{1}; E)$ is independent of the choice
of linearly independent solutions. Define
\begin{equation}
\label{tildes}\tilde{\mathcal{S}}=\tilde{\mathcal{S}}^{(n_{0},n_{1}%
,n_{2},n_{3})}(\tau):=\{E\in \mathbb{C}\,|\, -2\leq \operatorname{tr} \rho
(\ell_{1}; E)\leq2\}.
\end{equation}

\begin{lemma}
\label{lemma4} Let $\tau \in i\mathbb{R}_{>0}$. Then the set $\tilde
{\mathcal{S}}$ is symmetric with respect to the real line $\mathbb{R}$.
\end{lemma}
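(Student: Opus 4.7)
The plan is to exploit the reflection symmetry of the rectangular torus, namely $\bar\tau=-\tau$, which makes both the lattice $\Lambda_\tau$ and each half-period invariant under complex conjugation modulo $\Lambda_\tau$, and then to transport this symmetry from the potential $I(z;E)$ to the monodromy data $\theta_1(E)$ via Schwarz reflection.

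First I would verify that for $\tau\in i\mathbb{R}_{>0}$ we have $\overline{\Lambda_\tau}=\Lambda_\tau$ and $\overline{\omega_k/2}\equiv \omega_k/2\pmod{\Lambda_\tau}$ for each $k=0,1,2,3$ (e.g.\ $\overline{\tau/2}=-\tau/2\equiv \tau/2$). Combined with the series (\ref{wp}), this yields $\overline{\wp(z+\omega_k/2;\tau)}=\wp(\bar z+\omega_k/2;\tau)$ for every $k$, and hence
\begin{equation*}
\overline{I(z;E)}=I(\bar z;\bar E).
\end{equation*}
Consequently, if $y(z)$ is any local holomorphic solution of $y''=I(z;E)y$, the Schwarz-reflected function $\tilde y(z):=\overline{y(\bar z)}$ is holomorphic and satisfies $\tilde y''(z)=I(z;\bar E)\tilde y(z)$, so the map $y\mapsto \tilde y$ carries solutions of GLE (\ref{GLE-1}) at parameter $E$ to solutions at parameter $\bar E$.

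Next, I would apply this operation to the common eigenfunction $y_1(z;E)$ appearing in (\ref{mono1})--(\ref{mono2}). Using $\bar\tau=-\tau$, a direct computation gives
\begin{equation*}
\tilde y_1(z+1)=\overline{y_1(\bar z+1;E)}=e^{-\pi i\overline{\theta_1(E)}}\tilde y_1(z),\qquad \tilde y_1(z+\tau)=\overline{y_1(\bar z-\tau;E)}=e^{\pi i\overline{\theta_2(E)}}\tilde y_1(z).
\end{equation*}
Thus $\tilde y_1$ is itself a common eigenfunction of the monodromy of GLE (\ref{GLE-1}) at the spectral value $\bar E$. Matching eigenvalues with the form (\ref{mono1}) written at $\bar E$ forces $\overline{\theta_1(E)}\equiv \pm\theta_1(\bar E)\pmod{2\mathbb{Z}}$, and the elementary identity $\cos(\pi\bar w)=\overline{\cos(\pi w)}$ then yields
\begin{equation*}
\operatorname{tr}\rho(\ell_1;\bar E)=2\cos(\pi\theta_1(\bar E))=\overline{2\cos(\pi\theta_1(E))}=\overline{\operatorname{tr}\rho(\ell_1;E)}.
\end{equation*}
In particular, $\operatorname{tr}\rho(\ell_1;E)\in[-2,2]$ iff $\operatorname{tr}\rho(\ell_1;\bar E)\in[-2,2]$, which is precisely the symmetry of $\tilde{\mathcal S}$ claimed by the lemma.

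The only minor subtlety is the case $Q(E)=0$, where by Lemma \ref{lemma2}(2) the function $y_1(z;E)$ alone does not span the solution space and the monodromy is not diagonalizable; but in this case $\theta_1(E)\in\mathbb{Z}$ and $\operatorname{tr}\rho(\ell_1;E)=\pm 2\in\mathbb{R}$ anyway, so such exceptional points pose no genuine difficulty. I do not expect any real obstacle in this proof: the main bookkeeping step is simply to confirm that Schwarz reflection sends the distinguished common eigenfunction to a nonzero scalar multiple of the distinguished common eigenfunction at $\bar E$, so that the $\theta_j$'s transform as claimed.
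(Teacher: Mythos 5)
Your proposal is correct and follows essentially the same route as the paper: Schwarz reflection $y(z)\mapsto\overline{y(\bar z)}$ based on $\overline{\wp(z;\tau)}=\wp(\bar z;\tau)$ and the conjugation-invariance of the half-periods modulo $\Lambda_\tau$ for $\tau\in i\mathbb{R}_{>0}$, applied to the common eigenfunction $y_1(z;E)$ to relate the Floquet data at $E$ and $\bar E$. The only cosmetic difference is that you derive the identity $\operatorname{tr}\rho(\ell_1;\bar E)=\overline{\operatorname{tr}\rho(\ell_1;E)}$ for all $E$, while the paper first assumes $E\in\tilde{\mathcal S}$ (so $\theta_1(E)\in\mathbb{R}$) and concludes $\operatorname{tr}\rho(\ell_1;\bar E)=\operatorname{tr}\rho(\ell_1;E)$; both yield the symmetry.
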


\begin{proof}
It suffices to prove $\bar{E}\in \tilde{\mathcal{S}}$ if $E\in \tilde
{\mathcal{S}}$. Suppose $E\in \tilde{\mathcal{S}}$, then $\theta_{1}%
(E)\in \mathbb{R}$. Since $\tau \in i\mathbb{R}_{>0}$, it follows from the
expression (\ref{wp}) of $\wp(z;\tau)$ that $\overline{\wp(z;\tau)}=\wp
(\bar{z};\tau)$. Note that $\wp(z+\frac{\overline{\omega_{k}}}{2};\tau
)=\wp(z+\frac{\omega_{k}}{2};\tau)$. Then it is easy to see that $y(z)$ is a
solution of GLE (\ref{GLE-1}) if and only if $\tilde{y}(z):=\overline
{y(\bar{z})}$ is a solution of GLE
\begin{equation}
\tilde{y}^{\prime \prime}(z)=I(z;\bar{E})\tilde{y}(z)=\bigg[  \sum_{k=0}%
^{3}n_{k}(n_{k}+1)\wp \left(  z+\tfrac{\omega_{k}}{2};\tau \right)  +\bar
{E}\bigg]  \tilde{y}(z). \label{GLE-6}%
\end{equation}
Let $y_{1}(z;E)$ be the common eigenfunction of the monodromy matrices of GLE
(\ref{GLE-1}) such that (\ref{mono1}) holds, then $\tilde{y}_{1}%
(z):=\overline{y_{1}(\bar{z};E)}$ satisfies
\[
\tilde{y}_{1}(z+1)=e^{-\pi i\theta_{1}(E)}\tilde{y}_{1}(z),\; \; \tilde{y}%
_{1}(z+\tau)=e^{\pi i\overline{\theta_{2}(E)}}\tilde{y}_{1}(z),
\]
where $\theta_{1}(E)\in \mathbb{R}$ is used. Therefore, $\tilde{y}_{1}(z)$ is a
common eigenfunction of the monodromy matrices of GLE (\ref{GLE-6}) and so
\[
\operatorname{tr}\rho(\ell_{1};\bar{E})=e^{-\pi i\theta_{1}(E)}+e^{\pi
i\theta_{1}(E)}=\operatorname{tr}\rho(\ell_{1};E)\in \lbrack-2,2].
\]
This proves $\bar{E}\in \tilde{\mathcal{S}}$.
\end{proof}

The following lemma is the only place of this paper where we need to use some notions and classical results about the spectral theory of Hill's equations that will be recalled in Appendix \ref{Floquet-theory}.
\begin{lemma}
\label{lemma5} Let $\tau \in i\mathbb{R}_{>0}$ and $n_{k}\in \mathbb{Z}_{\geq 0}$ for all $k$ with $\max_k n_{k}\geq1$. Suppose $Q(E;\tau)=Q^{(n_{0}, n_{1},
n_{2}, n_{3})}(E;\tau)$ has $2g+1$ real distinct zeros, denoted by
$E_{2g}<E_{2g-1}<\cdots<E_{1}<E_{0}$. Then
\begin{equation}
\label{spectrum1}\tilde{\mathcal{S}}^{(n_{0},n_{1},n_{2},n_{3})}%
(\tau)=(-\infty, E_{2g}]\cup[E_{2g-1}, E_{2g-2}]\cup \cdots \cup[E_{1}, E_{0}].
\end{equation}

\end{lemma}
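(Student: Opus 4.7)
The plan is to interpret $\tilde{\mathcal{S}}$ as the conditional stability set of a bona fide Hill equation on the real axis, and to read off its band structure from the $2g+1$ real zeros of $Q(E)$ via finite-gap spectral theory. Since $\tau\in i\mathbb{R}_{>0}$, the potential $I(x;E)=\sum_k n_k(n_k+1)\wp(x+\omega_k/2;\tau)+E$ restricted to $x\in\mathbb{R}$ is real-valued away from its (apparent) second-order poles in $\tfrac{1}{2}\mathbb{Z}$ and is $1$-periodic. Choosing $\ell_1$ to be the real period, the trace $\operatorname{tr}\rho(\ell_1;E)$ agrees with the classical Hill discriminant $\Delta(E)=2\cos(\pi\theta_1(E))$, so by construction
\[
\tilde{\mathcal{S}}=\{E\in\mathbb{C}:\Delta(E)\in[-2,2]\}.
\]

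First I would show $\tilde{\mathcal{S}}\subset\mathbb{R}$. At any $E$ with $\Delta(E)\in[-2,2]$, the two Floquet multipliers have modulus one, so GLE (\ref{GLE-1}) possesses a nontrivial Floquet solution uniformly bounded on $\mathbb{R}$. Since the Hill operator associated with the real potential $I(\cdot;E)-E$ is self-adjoint on $L^{2}(\mathbb{R})$ (the second-order singularities in $\tfrac{1}{2}\mathbb{Z}$ are of limit-point type, thanks to $n_k(n_k+1)\geq 2$ whenever $n_k\geq 1$), such a bounded solution forces $E\in\mathbb{R}$ by the real spectrum property recorded in Appendix \ref{Floquet-theory}. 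Combined with the symmetry given in Lemma \ref{lemma4}, this pins $\tilde{\mathcal{S}}$ to the real axis.

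Next I would invoke the finite-gap description of the Treibich-Verdier potential. By the algebro-geometric spectral theory reviewed in Appendix \ref{Floquet-theory} (see also \cite{GW1,GW}), the entire function $\Delta(E)^2-4$ admits a factorization
\[
\Delta(E)^2-4 = c\,Q(E)\,\Psi(E)^2,\qquad c\neq 0,
\]
in which $\Psi(E)$ is an entire function whose zeros encode the (at most countably many) closed gaps. Consequently the endpoints of the connected components of $\tilde{\mathcal{S}}$ are exactly the real zeros of $Q(E)$; under our hypothesis these are the $2g+1$ distinct points $E_0>E_1>\cdots>E_{2g}$, so $\tilde{\mathcal{S}}$ decomposes into at most $g+1$ disjoint closed intervals with endpoints among the $E_j$'s.

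Finally I would fix the orientation of the bands by examining $\Delta(E)$ as $E\to\pm\infty$ along $\mathbb{R}$: a standard WKB-type estimate gives $|\Delta(E)|\to\infty$ for $E\to+\infty$ (so $(E_0,+\infty)$ is a gap) while $|\Delta(E)|\leq 2$ holds eventually for $E\to-\infty$ (so $(-\infty,E_{2g}]$ is a band). Together with the classical interlacing that consecutive simple zeros of $\Delta^2-4$ alternate between roots of $\Delta-2$ and of $\Delta+2$, this forces the claimed decomposition
\[
\tilde{\mathcal{S}}=(-\infty,E_{2g}]\cup[E_{2g-1},E_{2g-2}]\cup\cdots\cup[E_1,E_0].
\]
The main technical obstacle is to execute the above plan rigorously in the presence of the second-order singularities at half-lattice points; both the confinement $\tilde{\mathcal{S}}\subset\mathbb{R}$ and the factorization of $\Delta^2-4$ are furnished by the singular-Hill framework of Gesztesy-Weikard recalled in Appendix \ref{Floquet-theory}.
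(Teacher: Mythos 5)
Your overall strategy (confine $\tilde{\mathcal{S}}$ to $\mathbb{R}$, identify the band endpoints with the zeros of $Q$, fix the orientation by the asymptotics of $\Delta$) mirrors the shape of the paper's argument, but the execution has a genuine gap at its first and most delicate step: you work with the restriction of the potential to the real axis, where it has second-order poles at every point of $\tfrac{1}{2}\mathbb{Z}$, and you try to deduce $\tilde{\mathcal{S}}\subset\mathbb{R}$ from self-adjointness. This does not work. First, a Floquet solution with unimodular multipliers is \emph{not} bounded on $\mathbb{R}$: near a singular point it generically behaves like $(x-x_0)^{-n_k}$, so it is neither bounded nor locally square-integrable, and it does not produce a point of the spectrum of any self-adjoint realization. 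Second, precisely because each singularity is in the limit-point case, the self-adjoint operator on $L^{2}(\mathbb{R})$ decouples into a direct sum over the intervals between consecutive singular points; its spectrum is purely discrete and bears no relation to the conditional stability set $\Delta^{-1}([-2,2])$, so the Rofe-Beketov identification you implicitly rely on is unavailable. Third, the factorization $\Delta(E)^2-4=c\,Q(E)\Psi(E)^2$ you invoke comes from Theorem B of the appendix, which is stated for \emph{continuous} potentials. Note also that no shift can rescue a real self-adjoint argument: there is no horizontal line on which the Treibich-Verdier potential is simultaneously smooth and real-valued (shifting by $\tfrac{\tau}{2}$ removes the singularities at $0,\tfrac12$ but brings those at $\tfrac{\tau}{2},\tfrac{1+\tau}{2}$ onto the real line).

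The paper circumvents all of this by restricting to the line $\mathbb{R}+\tfrac{\tau}{4}$, where the potential is smooth and $1$-periodic but genuinely \emph{complex-valued}. It then (i) identifies $\tilde{\mathcal{S}}$ with the spectrum $\sigma(H)$ of the resulting non-self-adjoint Hill operator via Rofe-Beketov, (ii) applies the Gesztesy-Weikard Theorem B for complex potentials to conclude that $\sigma(H)$ consists of finitely many bounded arcs plus one semi-infinite arc whose finite endpoints are exactly the $E_j$, with an odd number $d(E_j)=1+2p_i(E_j)$ of semi-arcs meeting at each $E_j$, and (iii) combines the reality and distinctness of the $E_j$ with the reflection symmetry of $\tilde{\mathcal{S}}$ from Lemma \ref{lemma4} and the path-connectedness of $\mathbb{C}\setminus\sigma(H)$ to force $\sigma(H)\subset\mathbb{R}$, $d(E_j)=1$, and the stated decomposition. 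To repair your write-up you must either adopt this shift to a nonreal line (after which the complex-potential spectral theory is unavoidable and your self-adjointness and WKB steps must be replaced by the symmetry-plus-connectedness argument), or else invoke the singular-potential Floquet theory of Weikard, which the paper deliberately avoids.
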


\begin{proof}
To avoid the singularities on $\mathbb{R}$,
we let $z=x+\frac{\tau}{4}$ with $x\in \mathbb{R}$ and $w(x):=y(z)=y(x+\frac{\tau
}{4})$ in GLE (\ref{GLE-1}). Then $w(x)$ satisfies the following Hill's
equation
\begin{equation}
w^{\prime \prime}(x)-\bigg[  \sum_{k=0}^{3}n_{k}(n_{k}+1)\wp \left(
x+\tfrac{\tau}{4}+\tfrac{\omega_{k}}{2};\tau \right)  \bigg]
w(x)=Ew(x),\;x\in \mathbb{R}, \label{GLE-1-1}%
\end{equation}
with the potential $q(x):=-\sum_{k=0}^{3}n_{k}(n_{k}+1)\wp \left(
x+\tfrac{\tau}{4}+\tfrac{\omega_{k}}{2};\tau \right)  $ being {\it smooth} on
$\mathbb{R}$ with period $\Omega=1$.
Let $w_{1}(x)$, $w_{2}(x)$ be any two linearly independent solutions of (\ref{GLE-1-1}). Then so do $w_{1}(x+\Omega)$, $w_{2}(x+\Omega)$ and
hence there is a monodromy matrix $M(E)\in SL(2,\mathbb{C})$ such that
\[
(w_{1}(x+\Omega),w_{2}(x+\Omega))=(w_{1}(x), w_{2}(x))M(E).
\]
As in Appendix \ref{Floquet-theory}, we define the \emph{Hill's discriminant} $\Delta(E)$ by
\begin{equation}
\label{trace11}\Delta(E):=\text{tr}M(E),
\end{equation}
which is independent of the choice of solutions. This $\Delta(E)$ is  an entire function and plays a
fundamental role since it encodes all the spectrum
information of the associated operator. Indeed, we define
\[
\mathcal{S}:=\Delta^{-1}([-2,2])=\{E\in \mathbb{C}\,|\, -2\leq \Delta(E)\leq2\}
\]
to be \textit{the conditional stability set} of the operator $L=\frac{d^{2}%
}{dx^{2}}+q(x)$. Since $q(x)$ is continuous, it was proved in \cite{Rofe-Beketov} that \textit{this $\mathcal{S}$
coincides with the spectrum $\sigma(H)$ of the associated linear operator $H$
in $L^{2}(\mathbb{R}, \mathbb{C})$} (i.e. $H$ is defined as $Hf=Lf$, $f\in
H^{2,2}(\mathbb{R},\mathbb{C})$).

Observe that

\begin{itemize}
\item[$(\star)$] \textit{if $(y_{1}(z),y_{2}(z))$ is a pair of linearly
independent solutions of GLE (\ref{GLE-1}), then $(w_{1}(x), w_{2}%
(x)):=(y_{1}(x+\frac{\tau}{4}),y_{2}(x+\frac{\tau}{4}))$ is a pair of linearly
independent solutions of equation (\ref{GLE-1-1})}.
\end{itemize}

\noindent Thus, the monodromy matrix $\rho(\ell_{1};E)$ is also the monodromy
matrix of equation (\ref{GLE-1-1}), which gives
\[
\Delta(E)=\operatorname{tr}\rho(\ell_{1};E)=2\cos(\pi \theta_{1}(E))
\]
and so we obtain the following important identity
\begin{equation}
\sigma(H)=\mathcal{S}=\{E\in \mathbb{C}\,|\,-2\leq \Delta(E)\leq2\}=\tilde{\mathcal{S}}.
\label{spectrum2}%
\end{equation}

On the other hand,  Lemma \ref{lemma2}-(1) and $(\star)$ imply that if $Q(E)\neq 0$, then $w_j(x):=y_j(x+\frac{\tau}{4};E)$, $j=1,2$, are linearly independent Floquet solutions of (\ref{GLE-1-1}). So we can apply Theorem B in Appendix \ref{Floquet-theory} to (\ref{GLE-1-1}). In
particular, $(\star)$ infers that the polynomial $R_{2g+1}(E)$ in Theorem
B-(ii) is precisely $Q(E)$; see also \cite{GW1}. Together with our
assumption, we obtain
\[
R_{2g+1}(E)=Q(E)=\prod_{j=0}^{2g}(E-E_{j}).
\]
Then it follows from (\ref{hyper}) that
\begin{equation}\label{d-odd}
d(E_{j}):=\operatorname{ord}_{E_{j}}(\Delta(\cdot)^{2}-4)=1+2p_{i}(E_{j})\;\;\text{is \textit{odd} for all $j\in \lbrack0,2g]$.}
\end{equation}
By $\sigma(H)=\{E|-2\leq \Delta(E)\leq2\}$, it is easy to prove
(see e.g. the proof of Theorem B-(iii) in \cite{GW}) that there are $d(E_{j})$
semi-arcs of the spectrum $\sigma(H)$ meeting at $E_{j}$. On the other hand,
Theorem B-(iii) says that: The spectrum $\sigma(H)=\mathcal{S}$ consists of
finitely many bounded spectral arcs $\sigma_{k}$, $1\leq k\leq \tilde{g}$ for
some $\tilde{g}\leq g$ and {\it one} semi-infinite arc $\sigma_{\infty}$ which tends
to $-\infty+\langle q\rangle$, i.e.
\[
\sigma(H)=\mathcal{S}=\sigma_{\infty}\cup \cup_{k=1}^{\tilde{g}}\sigma_{k}.
\]
Furthermore, the set of the finite end points of such arcs is precisely $\{E_{j}
\}_{j=0}^{2g}$ because of (\ref{d-odd}). Together these with the following three facts:

\begin{itemize}
\item[(a)] Our assumption gives $E_{j}\in \mathbb{R}$ and $E_{2g}<E_{2g-1}%
<\cdots<E_{1}<E_{0}$;

\item[(b)] Lemma \ref{lemma4} and (\ref{spectrum2}) imply that the spectrum
$\sigma(H)=\mathcal{S}=\tilde{\mathcal{S}}$ is symmetric with respect to the
real line $\mathbb{R}$;

\item[(c)] A classical result (see e.g. \cite[Theorem 2.2]{GW2}) says that
$\mathbb{C}\setminus \sigma(H)$ is path-connected;
\end{itemize}

\noindent we easily conclude that (i) $\sigma(H)\subset \mathbb{R}$, (ii) $d(E_{j})=1$
(i.e. $\frac{d}{dE}\Delta(E_{j})\neq 0$) for all $j$ and so
\begin{equation}\label{s-finite}
\tilde{\mathcal{S}}=\sigma(H)=(-\infty,E_{2g}]\cup \lbrack E_{2g-1}%
,E_{2g-2}]\cup \cdots \cup \lbrack E_{1},E_{0}].
\end{equation}
Indeed, since (a) says that all finite end points of spectral arcs are on $\mathbb{R}$, the assertion (i) $\sigma(H)\subset \mathbb{R}$ follows immediately from (b)-(c). Consequently, there are at most two semi-arcs of $\sigma(H)$ meeting at each $E_j$. This, together with (\ref{d-odd}), yields the assertion (ii) $d(E_j)=1$ for all $j$, namely there is exactly one semi-arc of $\sigma(H)$ ending at $E_j$, which finally implies (\ref{s-finite}).
The proof is complete.
\end{proof}

On the other hand, we have the following important observation.

\begin{lemma}
\label{lemma3} Fix $n_{k}\in \mathbb{Z}_{\geq 0}$ for all $k$ with $\max_k n_{k}\geq
1$. Let $E_{j}=E_{j}(\tau)$, $j=0,1,\cdots, 2g$, are all roots of
$Q^{(n_{0},n_{1},n_{2},n_{3})}(\cdot;\tau)=0$, i.e.
\[
Q^{(n_{0},n_{1},n_{2},n_{3})}(E;\tau)=\prod_{j=0}^{2g}(E-E_{j}(\tau)).
\]
Then
\begin{equation}
\label{modular1}Q^{(n_{0},n_{2},n_{1},n_{3})}(E;\tfrac{-1}{\tau})=\prod
_{j=0}^{2g}(E-\tau^{2}E_{j}(\tau)).
\end{equation}

\end{lemma}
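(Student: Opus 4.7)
The plan is to realize the transformation $\tau \mapsto -1/\tau$ on the spectral polynomial as the effect of a linear change of variable $w = \tau z$ in the GLE, and then to read off the roots.

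First, I would record the standard scaling identity
\[
\wp(z; -1/\tau) \;=\; \tau^{2}\,\wp(\tau z; \tau),
\]
obtained from $\wp(z;\omega_1,\omega_2)=c^{-2}\wp(z/c;\omega_1/c,\omega_2/c)$ with $c=-1/\tau$ together with the evenness of $\wp$ and the symmetry $\mathbb{Z}+\mathbb{Z}(-\tau)=\mathbb{Z}+\mathbb{Z}\tau$. Applied to each of the four half-periods $\omega'_{k}/2$ of $\Lambda_{-1/\tau}$, the identity gives
\[
\wp\bigl(z+\tfrac{\omega'_k}{2};\,-\tfrac{1}{\tau}\bigr)
 \;=\; \tau^{2}\,\wp\bigl(\tau z+\tfrac{\tau\omega'_k}{2};\,\tau\bigr),
\]
and a direct computation shows $\tau\omega'_k/2$ modulo $\Lambda_{\tau}$ equals $\omega_{\sigma(k)}/2$, where $\sigma$ is the involution swapping $1\leftrightarrow 2$ and fixing $0,3$ (the decisive computations being $\tau\omega'_1/2=\tau/2=\omega_2/2$ and $\tau\omega'_2/2=-1/2\equiv\omega_1/2$).

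Next I would plug this into the GLE at parameters $(n_0,n_2,n_1,n_3)$ and modulus $-1/\tau$ with energy $E$, and apply the substitution $w=\tau z$, $\tilde{y}(w):=y(w/\tau)$. Because $\tilde{y}''(w)=\tau^{-2}y''(z)$ and the permutation $\sigma$ reorders the summands so that the effective parameters at modulus $\tau$ become $(n_0,n_1,n_2,n_3)$, one obtains exactly
\[
\tilde{y}''(w) \;=\; \Bigl[\sum_{j=0}^{3} n_j(n_j+1)\,\wp\bigl(w+\tfrac{\omega_j}{2};\tau\bigr) + \tfrac{E}{\tau^{2}}\Bigr]\,\tilde{y}(w),
\]
i.e.\ the GLE at parameters $(n_0,n_1,n_2,n_3)$, modulus $\tau$, and energy $E/\tau^{2}$. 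Since the substitution $w=\tau z$ commutes with $z\mapsto -z$, the companion solution $y_2(z)=y_1(-z)$ transforms to $\tilde{y}_2(w)=\tilde{y}_1(-w)$, so the even elliptic product $\Phi$ underlying the definition \eqref{poly} of $Q$ is compatible with the change of variable. By Lemma \ref{lemma2}, $Q^{(n_0,n_2,n_1,n_3)}(E;-1/\tau)=0$ is equivalent to the linear dependence of $y_1(\cdot;E)$ and $y_1(-\cdot;E)$ for the left-hand GLE, and by the equivalence above this holds if and only if $Q^{(n_0,n_1,n_2,n_3)}(E/\tau^{2};\tau)=0$.

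Finally, I would conclude by a counting argument. The genus formula \eqref{genus} depends only on the sorted multiset $\{n_0,n_1,n_2,n_3\}$, which is invariant under the swap $n_1\leftrightarrow n_2$; hence both $Q^{(n_0,n_1,n_2,n_3)}(\cdot;\tau)$ and $Q^{(n_0,n_2,n_1,n_3)}(\cdot;-1/\tau)$ are monic of the same degree $2g+1$. Having shown their root sets correspond via $E\mapsto\tau^{2}E$, the identity \eqref{modular1} follows. The only nontrivial step is item two above---keeping track of the permutation $\sigma$ of the half-periods modulo $\Lambda_{\tau}$---so I expect that to be the main (and essentially only) place where care is needed; everything else is bookkeeping.
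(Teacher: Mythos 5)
Your proposal follows essentially the same route as the paper's proof: the scaling identity $\wp(z;-1/\tau)=\tau^{2}\wp(\tau z;\tau)$, the induced permutation $1\leftrightarrow 2$ of the half-periods, the change of variable $w=\tau z$ turning one GLE into the other, and the detection of roots of $Q$ via linear dependence of $y_{1}(z;E)$ and $y_{1}(-z;E)$ from Lemma \ref{lemma2}. The one place where your argument is incomplete is the final counting step. The dependence/independence criterion only identifies the \emph{zero sets} of the two polynomials --- it tells you whether a given $E$ is a root, not with what multiplicity --- and two monic polynomials of the same degree with the same zero set need not coincide when some roots are multiple. Since the lemma asserts the identity $Q^{(n_{0},n_{2},n_{1},n_{3})}(E;-1/\tau)=\prod_{j}(E-\tau^{2}E_{j}(\tau))$ with the $E_{j}(\tau)$ listed with multiplicity, "same root sets plus same degree" does not by itself yield the claimed factorization. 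The paper closes this by invoking Lemma \ref{lemma2}(3), that the roots of $Q(\cdot;\tau)$ are distinct for generic $\tau\in\mathbb{H}$: the identity then holds for generic $\tau$, where multiplicities are all one, and extends to all $\tau$ by continuity of both sides in $\tau$. With that addition your proof is complete.
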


\begin{proof}
Recall the modular property of $\wp(z;\tau)$:
\[
\wp(z;\tfrac{-1}{\tau})=\tau^{2}\wp(\tau z;\tau),
\]
which gives
\[
\wp(z+\tfrac{1}{2};\tfrac{-1}{\tau})=\tau^{2}\wp(\tau z+\tfrac{\tau}{2}%
;\tau),
\]%
\[
\wp(z+\tfrac{-1}{2\tau};\tfrac{-1}{\tau})=\tau^{2}\wp(\tau z+\tfrac{1}{2}%
;\tau),
\]%
\[
\wp(z+\tfrac{\tau-1}{2\tau};\tfrac{-1}{\tau})=\tau^{2}\wp(\tau z+\tfrac
{1+\tau}{2};\tau).
\]
From here, we immediately see that $y(z)$ is a solution of GLE (\ref{GLE-1})
if and only if $\tilde{y}(z):=y(\tau z)$ is a solution of GLE
\begin{equation}
\tilde{y}^{\prime \prime}(z)=\bigg[  \sum_{k=0}^{3}\tilde{n}_{k}(\tilde{n}%
_{k}+1)\wp \left(  z+\tfrac{\omega_{k}}{2};\tfrac{-1}{\tau}\right)  +\tau
^{2}E\bigg]  \tilde{y}(z) \label{GLE-4}%
\end{equation}
with $(\tilde{n}_{0},\tilde{n}_{1},\tilde{n}_{2},\tilde{n}_{3})=(n_{0}%
,n_{2},n_{1},n_{3})$ (of course, we mean $\omega_{2}=\frac{-1}{\tau}$ and
$\omega_{3}=1+\frac{-1}{\tau}$ in (\ref{GLE-4})).

Let $y_{1}(z;E)$ be the common eigenfunction of the monodromy matrices of GLE
(\ref{GLE-1}) such that (\ref{mono1}) holds, then $\tilde{y}_{1}%
(z;E):=y_{1}(\tau z;E)$ satisfies
\[
\tilde{y}_{1}(z+1;E)=e^{\pi i \theta_{2}(E)}\tilde{y}_{1}(z;E),\; \; \tilde
{y}_{1}(z+\tfrac{-1}{\tau};E)=e^{-\pi i \theta_{1}(E)}\tilde{y}_{1}(z;E),
\]
i.e. $\tilde{y}_{1}(z;E)$ is a common eigenfunction of the monodromy matrices
of GLE (\ref{GLE-4}). When $E=E_{j}(\tau)$ (resp. $E\notin \{E_{j}\}_{j=0}%
^{2g}$), Lemma \ref{lemma2} says that $y_{1}(z;E)$ and $y_{1}(-z;E)$ are
linearly dependent (resp. linearly independent) and so do $\tilde{y}_{1}(z;E)$
and $\tilde{y}_{1}(-z;E)$. This yields that $\tau^{2} E_{j}(\tau)$,
$j=0,1,\cdots,2g$, are all the roots of
\[
Q^{(\tilde{n}_{0},\tilde{n}_{1},\tilde{n}_{2},\tilde{n}_{3})}(\cdot;\tfrac
{-1}{\tau}) =Q^{(n_{0},n_{2},n_{1},n_{3})}(\cdot;\tfrac{-1}{\tau})=0.
\]
Since (\ref{genus}) gives $\deg Q^{(n_{0},n_{2},n_{1},n_{3})}(\cdot;\tfrac
{-1}{\tau})=\deg Q^{(n_{0},n_{1},n_{2},n_{3})}(\cdot;\tau)=2g+1$, and
$E_{j}(\tau)$, $j=0,1,\cdots, 2g$, are all distinct for generic $\tau
\in \mathbb{H}$, it follows that (\ref{modular1}) holds for generic $\tau
\in \mathbb{H}$ and hence for all $\tau \in \mathbb{H}$ by continuity with
respect to $\tau$.
\end{proof}

Now we are in the position to prove Theorem \ref{thm4}.

\begin{proof}
[Proof of Theorem \ref{thm4}]Let $\tau \in i\mathbb{R}_{>0}$ and suppose
$Q^{(n_{0}, n_{1}, n_{2}, n_{3})}(\cdot;\tau)$ has $2g+1$ real distinct zeros,
denoted by $E_{2g}<E_{2g-1}<\cdots<E_{1}<E_{0}$. Then Lemma \ref{lemma5}
gives
\[
\tilde{\mathcal{S}}^{(n_{0},n_{1},n_{2},n_{3})}(\tau)=(-\infty, E_{2g}%
]\cup[E_{2g-1}, E_{2g-2}]\cup \cdots \cup[E_{1}, E_{0}].
\]

Assume by contradiction that the monodromy representation of GLE (\ref{GLE-1})
is unitary for some $E=\hat{E}$. Then Corollary \ref{coro} implies
\begin{equation}
\label{thetar}Q^{(n_{0}, n_{1}, n_{2}, n_{3})}(\hat{E};\tau)\neq
0\quad \text{and}\quad(\theta_{1}(\hat{E}),\theta_{2}(\hat{E}))\in
\mathbb{R}^{2}\setminus \mathbb{Z}^{2}.
\end{equation}
It follows from the definition (\ref{tildes}) of $\tilde{\mathcal{S}}%
^{(n_{0},n_{1},n_{2},n_{3})}(\tau)$ that $\hat{E}\in \tilde{\mathcal{S}%
}^{(n_{0},n_{1},n_{2},n_{3})}(\tau)$ and $E\neq E_{j}$ for all $j$, i.e.
\begin{equation}
\label{ehat}\hat{E}\in(-\infty, E_{2g})\cup(E_{2g-1}, E_{2g-2})\cup \cdots
\cup(E_{1}, E_{0}).
\end{equation}

Note that $\frac{-1}{\tau}\in i\mathbb{R}_{>0}$ and
\[
\tau^{2}E_{0}<\tau^{2}E_{1}<\cdots<\tau^{2}E_{2g-1}<\tau^{2}E_{2g}.
\]
Since Lemma \ref{lemma3} shows that $Q^{(n_{0},n_{2},n_{1},n_{3})}(\cdot
;\frac{-1}{\tau})$ has $2g+1$ real distinct zeros $\{ \tau^{2}E_{j}%
\}_{j=0}^{2g}$, Lemma \ref{lemma5} applies for $\frac{-1}{\tau}$ and
$(n_{0},n_{2},n_{1},n_{3})$ and gives
\[
\tilde{\mathcal{S}}^{(n_{0},n_{2},n_{1},n_{3})}(\tfrac{-1}{\tau}%
)=(-\infty,\tau^{2}E_{0}]\cup \lbrack \tau^{2}E_{1},\tau^{2}E_{2}]\cup \cdots
\cup \lbrack \tau^{2}E_{2g-1},\tau^{2}E_{2g}].
\]
On the other hand, the proof of Lemma \ref{lemma3} shows that $\tilde{y}%
_{1}(z):=y_{1}(\tau z;\hat{E})$ is a common eigenfunction of the monodromy
matrices of GLE
\begin{equation}
\tilde{y}^{\prime \prime}(z)=\bigg[  \sum_{k=0}^{3}\tilde{n}_{k}(\tilde{n}%
_{k}+1)\wp \left(  z+\tfrac{\omega_{k}}{2};\tfrac{-1}{\tau}\right)  +\tau
^{2}\hat{E}\bigg]  \tilde{y}(z) \label{GLE-7}%
\end{equation}
with $(\tilde{n}_{0},\tilde{n}_{1},\tilde{n}_{2},\tilde{n}_{3})=(n_{0}%
,n_{2},n_{1},n_{3})$ and
\[
\tilde{y}_{1}(z+1)=e^{\pi i\theta_{2}(\hat{E})}\tilde{y}_{1}(z).
\]
Consequently, for GLE (\ref{GLE-7}) there holds
\[
\operatorname{tr}\rho(\ell_{1};\tau^{2}\hat{E})=e^{\pi i\theta_{2}(\hat{E}%
)}+e^{-\pi i\theta_{2}(\hat{E})}=2\cos(\pi \theta_{2}(\hat{E}))\in \lbrack-2,2]
\]
by (\ref{thetar}). This implies $\tau^{2}\hat{E}\in \tilde{\mathcal{S}}%
^{(n_{0},n_{2},n_{1},n_{3})}(\tfrac{-1}{\tau})$, i.e.
\[
\tau^{2}\hat{E}\in(-\infty,\tau^{2}E_{0}]\cup \lbrack \tau^{2}E_{1},\tau
^{2}E_{2}]\cup \cdots \cup \lbrack \tau^{2}E_{2g-1},\tau^{2}E_{2g}]
\]
and hence
\[
\hat{E}\in \lbrack E_{2g},E_{2g-1}]\cup \cdots \cup \lbrack E_{2},E_{1}%
]\cup \lbrack E_{0},+\infty),
\]
which is a contradiction with (\ref{ehat}). Therefore, the monodromy
representation of GLE (\ref{GLE-1}) can not be unitary for any $E\in
\mathbb{C}$.
\end{proof}

\section{Application to curvature equation and $Q^{(n_0,n_1,n_2,n_3)}$}

\label{int-sys}

The purpose of this section is apply the previous results to prove Theorems \ref{no-even-solution}, \ref{no-unitary} and \ref{sharp-realroot11}.

By Theorems \ref{thm4} and \ref{thm5}, we immediately obtain the
following general result which contains Theorem \ref{no-even-solution} as a consequence.

\begin{theorem}
\label{thm6} Let $\tau \in i\mathbb{R}_{>0}$ and $n_{k}\in \mathbb{Z}_{\geq 0}$
for all $k$ with $\max_k n_{k}\geq1$. Suppose that all zeros of $Q^{(n_{0}, n_{1},
n_{2}, n_{3})}(\cdot;\tau)$ are real and distinct. Then the curvature
equation (\ref{mean}) on this $E_{\tau}$ has no even solutions.
\end{theorem}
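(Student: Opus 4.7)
The plan is to argue by contradiction, combining the two main results already established in Sections~\ref{PDE-unitary} and~\ref{general-lame}. Suppose, toward a contradiction, that the curvature equation (\ref{mean}) admits an even solution $u(z)$ on the rectangular torus $E_\tau$ with $\tau\in i\mathbb{R}_{>0}$. Our strategy is to extract from $u$ an accessory parameter $E\in\mathbb{C}$ for which the generalized Lamé equation (\ref{GLE-1}) has unitary monodromy, and then to contradict the spectral information encoded in the special polynomial $Q^{(n_0,n_1,n_2,n_3)}(\cdot;\tau)$.

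First I would apply Theorem~\ref{thm5}. Since $u$ is even, the combination $u_{zz}-\tfrac12 u_z^2$ is an even elliptic function with at worst double poles at $E_\tau[2]$, so it equals $-2I(z)$ with $I$ of Treibich--Verdier form, and an associated $E\in\mathbb{C}$ is produced. Theorem~\ref{thm5} then guarantees that for this value $E=\hat{E}$ the monodromy representation of the GLE (\ref{GLE-1}) is contained in $SU(2)$, i.e.\ is unitary.

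Next I would invoke Theorem~\ref{thm4} under the standing hypothesis that all $2g+1$ zeros of $Q^{(n_0,n_1,n_2,n_3)}(\cdot;\tau)$ are real and distinct. Theorem~\ref{thm4} asserts exactly that no such $\hat{E}$ can exist: the monodromy of GLE (\ref{GLE-1}) cannot be unitary for any $E\in\mathbb{C}$. This directly contradicts the conclusion drawn from Theorem~\ref{thm5}, and hence no even solution of (\ref{mean}) can exist on $E_\tau$.

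Since the entire argument is just a syllogism chaining Theorem~\ref{thm5} (PDE $\Rightarrow$ unitary monodromy of the associated GLE) with Theorem~\ref{thm4} (distinct real spectrum $\Rightarrow$ no unitary monodromy), there is no genuine obstacle in this step; all the real work has already been done in the preceding sections. The only thing worth making explicit in the write-up is why the $\hat{E}$ extracted from an even solution in Theorem~\ref{thm5} is the \emph{same} accessory parameter appearing in the GLE (\ref{GLE-1}) used in Theorem~\ref{thm4}, which follows at once from the identity (\ref{cc-1}) defining $I(z;\hat{E})$. Thus Theorem~\ref{thm6} follows immediately, and the sharp nonexistence results Theorems~\ref{no-even-solution} and~\ref{no-unitary} (as well as the sufficiency direction of Theorem~\ref{sharp-realroot11}) will in turn reduce to verifying the real-and-distinct hypothesis on $Q^{(n_0,n_1,n_2,n_3)}(\cdot;\tau)$ precisely when $(n_0,n_1,n_2,n_3)$ fails both (\ref{c1}) and (\ref{c2}).
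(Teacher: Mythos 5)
Your proposal is correct and is exactly the paper's argument: Theorem \ref{thm6} is obtained there by directly combining Theorem \ref{thm5} (an even solution yields some $E$ with unitary monodromy for GLE (\ref{GLE-1})) with Theorem \ref{thm4} (real distinct zeros of $Q^{(n_0,n_1,n_2,n_3)}(\cdot;\tau)$ rule out unitary monodromy for every $E$). Nothing further is needed.
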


\begin{remark}\label{remark4-2} The converse statement of Theorem \ref{thm6} does not necessarily hold. Here is an example. Define
$e_k=e_k(\tau):=\wp(\tfrac{\omega_k}{2};\tau)$, $k=1,2,3$.
It is well known that
\[e_1(\tau)>e_3(\tau)>e_2(\tau)\quad\text{for } \tau \in i\mathbb{R}_{>0},\]
\[e_1(\tau)\in \mathbb{R},\quad e_2(\tau)=\overline{e_3(\tau)}\notin \mathbb{R}\quad\text{for } \tau \in \tfrac{1}{2}+i\mathbb{R}_{>0}.\]
Now for $\tau \in i\mathbb{R}_{>0}$, we have $\frac{1+\tau}{2}\in \tfrac{1}{2}+i\mathbb{R}_{>0}$ and it is easy to compute that
\[Q^{(1, 0,
0, 1)}(E;\tau)=(E-E_0(\tau))(E-E_1(\tau))(E-E_2(\tau))\]
with $E_0(\tau)=e_1(\frac{1+\tau}{2})-2e_3(\tau)\in \mathbb{R}$, $E_1(\tau)=\overline{E_2(\tau)}=e_2(\frac{1+\tau}{2})-2e_3(\tau)\notin \mathbb{R}$, namely $Q^{(1, 0,
0, 1)}(E;\tau)$ always has two roots in $\mathbb{C}\setminus\mathbb{R}$ for any $\tau\in i\mathbb{R}_{>0}$. On the other hand, it was proved in \cite[Theorem 1.1]{CKL-2} (see also \cite{EG,EG1}) that there exist $0<b_0<1<b_1<\sqrt{3}$ such that
\[\Delta u+ e^u=8\pi \delta_0+ 8\pi \delta_{\omega_3/2}\quad \text{on}\; E_{\tau},\;\tau=ib,\;b>0\]
has no even solutions if and only if $b\in [b_0, b_1]$.
\end{remark}

Remark \ref{remark4-2} indicates that the zeros of $Q^{(n_{0}, n_{1}, n_{2},
n_{3})}(\cdot;\tau)$ are not necessarily real distinct for $\tau \in i\mathbb{R}_{>0}$ without
further conditions on $n_{k}$'s. Naturally we ask: {\it What are the $n_{k}$'s such that $Q^{(n_{0}, n_{1}, n_{2},
n_{3})}(\cdot;\tau)$ has real distinct zeros}? We have the following result on this aspect.

\begin{theorem}\label{sharp-realroot}
Let $n_{k}\in \mathbb{Z}_{\geq 0}$ for all $k$ with $\max_k n_k\ge 1$. If neither
\begin{equation}
\label{c1-11}\frac{n_{1}+n_{2}-n_{0}-n_{3}}{2}\geq1,\quad n_{1}\geq1,\quad
n_{2}\geq1
\end{equation}
nor
\begin{equation}
\label{c2-11}\frac{n_{1}+n_{2}-n_{0}-n_{3}}{2}\leq-1,\quad n_{0}\geq1,\quad
n_{3}\geq1
\end{equation}
hold, then for any $\tau\in i \mathbb{R}_{>0}$, the zeros of $Q^{(n_{0},n_{1}%
,n_{2},n_{3})}(\cdot;\tau)$ are real and distinct.
\end{theorem}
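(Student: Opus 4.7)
\textbf{Proof plan for Theorem~\ref{sharp-realroot}.} The approach will be to reduce GLE~\eqref{GLE-1} to a Hill's equation on the real line with a \emph{real-valued} periodic potential, which lets us read off the structure of $Q^{(n_0,n_1,n_2,n_3)}(\cdot;\tau)$ from classical Floquet theory for real potentials. The key geometric observation, for $\tau = ib \in i\mathbb{R}_{>0}$, is that $\wp(z + \omega_k/2; \tau)$ is real-valued precisely on each of the four ``axis lines'' $\{\operatorname{Im} z \in \{0, b/2\}\} \cup \{\operatorname{Re} z \in \{0, 1/2\}\}$ modulo $\Lambda_\tau$, and each axis line passes through exactly two of the four half-periods $\omega_k/2$.

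With this in mind, the first step is a combinatorial case analysis showing that, whenever $(n_0, n_1, n_2, n_3)$ satisfies neither~\eqref{c1-11} nor~\eqref{c2-11}, one can select an axis line $\mathcal{L}$ (shifted by a half-period if necessary) along which the potential is real and smooth for every $k$ with $n_k \geq 1$. Second, on such a line, GLE becomes a real Hill's equation $-w'' + Vw = Ew$ with smooth $1$-periodic $V$, and classical real Floquet theory puts $\sigma(H) \subset \mathbb{R}$ as a union of bands whose (anti)periodic endpoints are real. Third, via the identifications $\tilde{\mathcal{S}} = \sigma(H)$ (Lemma~\ref{lemma5}), $R_{2g+1}(E) = Q^{(n_0, n_1, n_2, n_3)}(E; \tau)$ (Section~\ref{general-lame}), and the odd-order vanishing~\eqref{d-odd}, the $2g+1$ zeros of $Q$ coincide with the band endpoints and so are real; distinctness follows because any coincidence of two endpoints would yield strictly fewer than $g$ open gaps, contradicting $\deg Q = 2g+1$.

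The principal obstacle is the case analysis in the first step: when the ``active'' half-periods (those with $n_k \geq 1$) sit on more than one axis line — in particular when all four $n_k \geq 1$, as is permitted in the balanced regime $|n_1 + n_2 - n_0 - n_3| \leq 1$ — no single axis line suffices. For such configurations the plan is to invoke one of the three nontrivial $2$-isogenies $\pi: E_\tau \to E_{\tau'}$ (quotient by an order-$2$ subgroup of $E_\tau[2]$) to descend the Treibich--Verdier equation to one on $E_{\tau'}$ whose active singularity pattern is strictly simpler, so that the axis-line reduction applies downstairs; the spectral polynomial factors in a controlled way under such an isogeny, a fact available in the Treibich--Verdier literature. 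Organizing this reduction uniformly over the hypothesis, so that the chosen isogeny matches the ``unbalanced'' pair among $(n_0,n_3)$ versus $(n_1,n_2)$, is where the real technical work of the proof will lie.
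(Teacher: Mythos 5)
Your overall strategy is genuinely different from the paper's, and its first step fails on a large part of the hypothesis. For $\tau=ib$ the potential $\sum_k n_k(n_k+1)\wp(z+\frac{\omega_k}{2};\tau)$ is real exactly on the lines $\operatorname{Re}z\in\frac12\mathbb{Z}$ and $\operatorname{Im}z\in\frac{b}{2}\mathbb{Z}$, and each such line passes through exactly two half-periods (the pairs being $\{0,1\}$, $\{2,3\}$, $\{0,2\}$, $\{1,3\}$); shifting by a half-period only permutes these four lines. Hence a line on which the potential is real \emph{and} smooth exists only when the active set $A=\{k\,:\,n_k\ge 1\}$ is contained in one of $\{0,1\}$, $\{2,3\}$, $\{0,2\}$, $\{1,3\}$. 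The hypothesis of Theorem \ref{sharp-realroot} admits many configurations violating this, e.g. $(n_0,n_1,n_2,n_3)=(1,1,1,1)$, $(1,1,1,0)$ or $(5,1,1,0)$. Your isogeny fallback requires the potential to be invariant under translation by the relevant half-period --- otherwise it does not descend to the quotient torus, and pulling back along a $2$-isogeny instead produces singularities at quarter-periods, so the result is no longer a Treibich--Verdier potential. That invariance forces the $n_k$'s to coincide in pairs; it handles $(1,1,1,1)$ but not $(1,1,1,0)$ or $(5,1,1,0)$, and you do not supply (nor am I aware of) a controlled factorization of the spectral polynomial under an isogeny when the potential fails to descend. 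So the case analysis cannot be closed as proposed.

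The paper's route is entirely algebraic and does not use real Floquet theory at all; in fact Lemma \ref{lemma5} takes ``real distinct zeros of $Q$'' as a \emph{hypothesis} and works with the complex (Gesztesy--Weikard) theory. One passes to Heun form via $x=\wp(z)$, invokes Takemura's factorization $Q=P^{(0)}P^{(1)}P^{(2)}P^{(3)}$ into polynomials $P_{\tilde{\alpha}_0,\tilde{\alpha}_1,\tilde{\alpha}_2,\tilde{\alpha}_3}(E)$ arising as the coefficient $c_{N+1}(E)$ of a three-term recursion, and proves each factor has real simple roots by Sturm-sequence/interlacing arguments (Theorem \ref{thm:Sturm}, and the new Theorem \ref{thm:Sturm1} for a sign pattern that is not a Sturm sequence); the cases with $\sum n_k$ odd are reduced via the Darboux-type identity $Q^{(n_0,n_1,n_2,n_3)}=Q^{(l_0,l_1,l_2,l_3)}$ in Theorem \ref{thm:Qrealdist2}. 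If you wish to pursue the spectral-theoretic idea, you would at minimum need a substitute for the axis-line reduction covering the three- and four-singular configurations permitted by the hypothesis.
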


The proof of Theorem \ref{sharp-realroot} is long and will be postponed in Section \ref{real-distinct}. We will see from Corollary \ref{sharp-realroot1} that our condition on $n_k$ in Theorem \ref{sharp-realroot} is sharp.
Now we can prove Theorems \ref{no-even-solution} and \ref{no-unitary} by applying Theorem \ref{sharp-realroot}.

\begin{proof}
[Proof of Theorem \ref{no-unitary}]Theorem \ref{no-unitary} follows
from Theorems \ref{thm4} and \ref{sharp-realroot}.
\end{proof}

\begin{proof}
[Proof of Theorem \ref{no-even-solution}]Theorem \ref{no-even-solution} follows from Theorems
\ref{thm6} and \ref{sharp-realroot}.
\end{proof}

Together with
Eremenko and Gabrielov's result Theorem A and our Theorem \ref{thm6}, we
immediately obtain

\begin{theorem}
\label{thm7} Let $n_{k}\in \mathbb{Z}_{\geq 0}$ for all $k$ with $\max_k n_{k}\geq1$. Suppose either
\begin{equation}
\label{c1-1}\frac{n_{1}+n_{2}-n_{0}-n_{3}}{2}\geq1,\quad n_{1}\geq1,\quad
n_{2}\geq1
\end{equation}
or
\begin{equation}
\label{c2-1}\frac{n_{1}+n_{2}-n_{0}-n_{3}}{2}\leq-1,\quad n_{0}\geq1,\quad
n_{3}\geq1.
\end{equation}
Then there exists $\tau \in i\mathbb{R}_{>0}$ such that $Q^{(n_{0},n_{1}%
,n_{2},n_{3})}(\cdot;\tau)$ has either multiple zeros or complex zeros.
\end{theorem}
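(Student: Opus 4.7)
The plan is to obtain Theorem \ref{thm7} as an immediate corollary of two results already in hand: Eremenko and Gabrielov's Theorem A (from \cite{EG}) and Theorem \ref{thm6} proved earlier in this section. No new estimates or constructions are needed; the statement is essentially the contrapositive of Theorem \ref{thm6} combined with the existence half of Theorem A.

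Concretely, suppose $(n_0,n_1,n_2,n_3)$ satisfies one of the two alternatives (\ref{c1-1}) or (\ref{c2-1}). Then the ``if'' direction of Theorem A furnishes some $\tau_0\in i\mathbb{R}_{>0}$ for which the curvature equation (\ref{mean}) on $E_{\tau_0}$ admits an even and symmetric solution $u$, i.e.\ $u(z)=u(-z)=u(\bar z)$. In particular $u$ is even, so (\ref{mean}) on this particular $E_{\tau_0}$ has at least one even solution.

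Next I would apply the contrapositive of Theorem \ref{thm6} at $\tau=\tau_0$: since Theorem \ref{thm6} asserts that reality and simplicity of all zeros of $Q^{(n_0,n_1,n_2,n_3)}(\cdot;\tau_0)$ preclude the existence of even solutions of (\ref{mean}), the even solution $u$ just produced forces $Q^{(n_0,n_1,n_2,n_3)}(\cdot;\tau_0)$ to have either a zero of multiplicity $\geq 2$ or at least one zero in $\mathbb{C}\setminus\mathbb{R}$. Setting $\tau=\tau_0$ then yields Theorem \ref{thm7}.

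The ``main obstacle'' in this deduction has already been overcome elsewhere: it is the proof of Theorem \ref{thm6} itself, which rests on the unitary monodromy result of Section \ref{PDE-unitary} (Theorem \ref{thm5}) together with the Hill's equation/spectral analysis of Section \ref{general-lame} (Theorem \ref{thm4}). Relative to those, the present argument is purely formal, amounting essentially to one implication, and I foresee no genuine difficulty in writing it out rigorously.
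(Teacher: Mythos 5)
Your proposal is correct and follows exactly the paper's own route: the paper likewise obtains Theorem \ref{thm7} immediately by combining the existence direction of Theorem A with the contrapositive of Theorem \ref{thm6}. Nothing further is needed.
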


Theorem \ref{thm7} indicates that our condition on $n_k$ in Theorem \ref{sharp-realroot} is sharp, namely Theorem \ref{sharp-realroot11} holds.

\begin{corollary}[=Theorem \ref{sharp-realroot11}]\label{sharp-realroot1}
Let $n_{k}\in \mathbb{Z}_{\geq 0}$ for all $k$ with $\max_k n_{k}\geq1$. Then all the zeros of $Q^{(n_{0},n_{1},n_{2},n_{3})}(\cdot;\tau)$ are real
and distinct for each $\tau \in i\mathbb{R}_{>0}$ if and only if $(n_{0}%
,n_{1},n_{2}$, $n_{3})$ satisfies neither (\ref{c1-1}) nor (\ref{c2-1}).
\end{corollary}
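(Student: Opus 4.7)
The plan is to derive Corollary \ref{sharp-realroot1} directly from three previously stated results: Theorem \ref{sharp-realroot} (whose proof is carried out in Section \ref{real-distinct}), Theorem \ref{thm6}, and Theorem A of Eremenko--Gabrielov. I would split the ``if and only if'' into its two implications and dispatch each as a short formal combination; no new analysis is required.

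For the sufficiency direction---that if $(n_0,n_1,n_2,n_3)$ satisfies neither (\ref{c1-1}) nor (\ref{c2-1}), then all zeros of $Q^{(n_0,n_1,n_2,n_3)}(\cdot;\tau)$ are real and distinct for every $\tau\in i\mathbb{R}_{>0}$---I would simply cite Theorem \ref{sharp-realroot}, which is exactly this assertion.

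For the necessity direction I would argue by contrapositive. Suppose $(n_0,n_1,n_2,n_3)$ satisfies (\ref{c1-1}) or (\ref{c2-1}). Then Theorem A produces some $\tau_0\in i\mathbb{R}_{>0}$ on which equation (\ref{mean}) admits an even and symmetric solution; this is in particular an even solution. Applying Theorem \ref{thm6} in contrapositive form at $\tau=\tau_0$, the existence of such an even solution forces $Q^{(n_0,n_1,n_2,n_3)}(\cdot;\tau_0)$ to have either a multiple zero or a nonreal zero, which is precisely the failure of the corollary's conclusion at this $\tau_0$. This is essentially the content of Theorem \ref{thm7}, and I would phrase the argument so as to absorb Theorem \ref{thm7} into the present proof rather than quoting it separately.

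I do not expect a genuine obstacle: once Theorem \ref{sharp-realroot} and Theorem \ref{thm6} are in hand and Theorem A is invoked, the corollary is purely a bookkeeping step. The only minor item to verify is that the conditions labelled (\ref{c1-1})--(\ref{c2-1}) here coincide with (\ref{c1})--(\ref{c2}) in the statement of Theorem A, which is immediate from their defining inequalities. All of the real work lies upstream---in the Eremenko--Gabrielov construction of symmetric solutions on rectangular tori, and in our own spectral/monodromy chain (Theorems \ref{thm4}, \ref{thm5}, and \ref{sharp-realroot}) that feeds into Theorem \ref{thm6}.
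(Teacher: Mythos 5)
Your proposal is correct and follows exactly the paper's route: the sufficiency direction is Theorem \ref{sharp-realroot}, and the necessity direction is the combination of Theorem A with the contrapositive of Theorem \ref{thm6}, which is precisely the content of Theorem \ref{thm7}. No gaps.
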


In general, it is very difficult to prove such an optimal algebraic result for the spectral polynomial $Q^{(n_{0},n_{1},n_{2},n_{3})}(\cdot;\tau)$ of the Treibich-Verdier potential (\ref{Tre}). Corollary \ref{sharp-realroot1} is a beautiful application of Eremenko and Gabrielov's result (Theorem A, via geometric approach) and our result (Theorem \ref{thm6}, via analytic approach).

\section{Real distinct roots of $Q^{(n_{0},n_{1},n_{2},n_{3})}$}

\label{real-distinct}

The purpose of this section is to prove Theorem \ref{sharp-realroot}. As pointed out by Corollary \ref{sharp-realroot1}, our condition on $n_k$ in Theorem \ref{sharp-realroot} is optimal. A non-optimal version of Theorem \ref{sharp-realroot} was proved in \cite[Theorem 1.1]{CKLT}.

\subsection {}
As in \cite{CKLT}, first we need to investigate polynomial solutions of
\begin{equation}
\frac{d^{2}y}{dx^{2}}+\left(  \frac{\gamma_{1}}{x-t_{1}}+\frac{\gamma_{2}%
}{x-t_{2}}+\frac{\gamma_{3}}{x-t_{3}}\right)  \frac{dy}{dx}+\frac{\alpha
\beta(x-t_{3})-q}{\prod_{j=1}^{3}(x-t_{j})}y=0, \label{eq:Heunt1t2t3}%
\end{equation}
where
\begin{equation}\label{t-i}t_{1}\neq t_{2}\neq t_{3}\neq t_{1}, \quad \gamma_{3}\not \in -{\mathbb{Z}
}_{\geq0},\end{equation}
\begin{equation}\label{alpha-gamma} \alpha=-N \;\text{with}\; N\in\mathbb{Z}_{\geq 0},\quad\alpha+\beta
+1=\gamma_{1}+\gamma_{2}+\gamma_{3}.\end{equation}
It is a Fuchsian equation on $\mathbb{CP}^{1}$ with four regular singularities
$\{t_{1},t_{2},t_{3},\infty \}$, with the exponents being $0, 1-\gamma_{j}$ at $t_j$ and $\alpha,\beta$ at $\infty$. Set
\begin{equation}
y=\sum_{m=0}^{\infty}c_{m}(x-t_{3})^{m},\quad \text{where}\;c_{0}=1,
\end{equation}
and substitute it to the differential equation which is multiplied by
$\prod_{j=1}^{3}(x-t_{j})$ to (\ref{eq:Heunt1t2t3}). Then the
coefficients satisfy the following recursive relations:
\begin{equation}
(t_{1}-t_{3})(t_{2}-t_{3})\gamma_{3}c_{1} =qc_{0}=q,\label{eq:Hlci}\end{equation}
\begin{align*}
(t_{1}-t_{3})(t_{2}-t_{3})(m+1)(m+\gamma_{3})c_{m+1}  &  =-(m-1+\alpha)(m-1+\beta)c_{m-1}\nonumber \\
+[m\{(m-1+\gamma_{3})(t_{1}+t_{2}-2t_{3})+  &  (t_{2}-t_{3})\gamma_{1}%
+(t_{1}-t_{3})\gamma_{2}\}+q]c_{m}.
\end{align*}
Consequently, it is easy to see that \emph{$c_{r}$ is a polynomial in $q$ of
degree $r$ and we denote it by $c_{r}(q)$}.

Let $q_{0}$ be a solution to the equation $c_{N+1} (q)=0$, where $N$ is given by (\ref{alpha-gamma}). Then
it follows from (\ref{eq:Hlci}) for $m=N+1$ that $c_{N+2} (q_{0})=0$. By
applying (\ref{eq:Hlci}) for $m=N+2, N+3, \dots$, we have $c_{m} (q_{0})=0$
for $m\geq N+3$. Hence, if $c_{N+1} (q_{0})=0$, then
(\ref{eq:Heunt1t2t3}) have a non-zero polynomial solution. More precisely, we
obtain the following proposition.

\begin{proposition}
\label{prop:Heunpolym} Suppose (\ref{t-i})-(\ref{alpha-gamma}) hold.
If $q$ is a solution to the equation $c_{N+1} (q)=0$, then the differential
equation (\ref{eq:Heunt1t2t3}) have a non-zero polynomial solution of degree
no more than $N$.
\end{proposition}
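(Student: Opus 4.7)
The plan is to exploit the termination phenomenon built into the recursion (\ref{eq:Hlci}) by the condition $\alpha = -N$. Any formal power series $y = \sum_{m \geq 0} c_m (x - t_3)^m$ with $c_0 = 1$ that solves (\ref{eq:Heunt1t2t3}) around the regular singular point $t_3$ (with local exponent $0$) is uniquely determined by $q$, because the leading factor $(t_1 - t_3)(t_2 - t_3)(m+1)(m + \gamma_3)$ on the left-hand side of the recursion never vanishes for $m \geq 0$, thanks to the assumptions $t_1 \neq t_3 \neq t_2$ and $\gamma_3 \notin -\mathbb{Z}_{\geq 0}$. The polynomial dependence of $c_r$ on $q$, of degree $r$, has already been observed in the preceding discussion, so the real content of the proposition is: if $c_{N+1}(q_0) = 0$, then $c_m(q_0) = 0$ for every $m \geq N+1$.

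The crucial step is to evaluate the recursion at $m = N + 1$. The coefficient of $c_{m-1} = c_N$ on the right-hand side is $-(m - 1 + \alpha)(m - 1 + \beta) = -(N + \alpha)(N + \beta)$, which vanishes identically because $\alpha = -N$. Hence the recursion collapses at this step to an expression of $c_{N+2}(q)$ as an explicit scalar multiple of $c_{N+1}(q)$, and consequently $c_{N+1}(q_0) = 0$ forces $c_{N+2}(q_0) = 0$. For $m \geq N + 2$ the recursion expresses $c_{m+1}(q_0)$ as a linear combination of $c_m(q_0)$ and $c_{m-1}(q_0)$, and both vanish by induction starting from $c_{N+1}(q_0) = c_{N+2}(q_0) = 0$. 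Thus $c_m(q_0) = 0$ for all $m \geq N + 1$.

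Consequently the formal series degenerates to the finite sum
\[
y(x) = \sum_{m = 0}^{N} c_m(q_0)(x - t_3)^m,
\]
which is a polynomial of degree at most $N$, nonzero because $c_0 = 1$, and which by construction solves (\ref{eq:Heunt1t2t3}). The argument is essentially mechanical once the single cancellation $N + \alpha = 0$ at $m = N+1$ is identified; the only point requiring care is ensuring that the leading coefficient of the recursion is invertible at every $m \geq 0$, which is precisely the content of the technical hypothesis $\gamma_3 \notin -\mathbb{Z}_{\geq 0}$, so there is no substantive obstacle to the proof.
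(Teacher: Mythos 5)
Your proof is correct and follows essentially the same route as the paper: the single cancellation $-(N+\alpha)(N+\beta)=0$ at $m=N+1$ forces $c_{N+2}(q_0)=0$, after which induction via the recursion kills all higher coefficients, truncating the series to a nonzero polynomial of degree at most $N$. The additional remark that $\gamma_3\notin-\mathbb{Z}_{\geq 0}$ keeps the leading factor of the recursion invertible is a correct (and implicit in the paper) justification of why the coefficients are well defined.
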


Now we restrict to the case that all the parameters are \emph{real}. Then $c_{r}(q)$ is a polynomial of $q$ with real coefficients.

\begin{theorem}\cite{CKLT}
\label{thm:Sturm} Let $t_j, \gamma_j$ are all real. Assume that $ \alpha=-N$ with $N \in{\mathbb{Z}}_{\geq0} $, $\beta =\gamma_{1}+
\gamma_{2} +\gamma_{3} +N-1>0$, $\gamma_{3} >0$ and $(t_{1} -t_{3})(t_{2}- t_{3}) <0$. Then the
equation $c_{N+1} (q)=0$ has all its roots real and unequal.
\end{theorem}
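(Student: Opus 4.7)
My approach is to exhibit $c_{N+1}(q)$, up to a nonzero multiplicative constant, as the characteristic polynomial of an explicit tridiagonal (Jacobi-type) matrix, and then to read off reality and simplicity of its roots from the sign structure of the off-diagonal entries. This is a linear-algebra incarnation of the Sturm--Liouville philosophy, tailored to the finite-dimensional polynomial eigenvalue problem at hand.

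First, I would recast (\ref{eq:Heunt1t2t3}) as an eigenvalue problem for the operator
\[M(y) := \prod_{j=1}^{3}(x-t_j)\,y'' + \Bigl[\sum_{j=1}^3 \gamma_j \prod_{k \neq j}(x-t_k)\Bigr] y' + \alpha\beta(x-t_3) y,\]
so that (\ref{eq:Heunt1t2t3}) becomes $M(y) = q y$. A routine leading-term check, using $\alpha = -N$ and $\alpha+\beta+1 = \gamma_1+\gamma_2+\gamma_3$, shows that for $y$ of degree $n \leq N$ the coefficient of $x^{n+1}$ in $M(y)$ equals $-(N-n)(n+\beta)$, which vanishes precisely at $n=N$. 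Hence $M$ preserves the $(N+1)$-dimensional space $V := \{y : \deg y \leq N\}$, and by Proposition \ref{prop:Heunpolym} a polynomial solution of degree $\leq N$ exists if and only if $q$ is an eigenvalue of $M|_V$; equivalently, $c_{N+1}(q)$ is (up to a nonzero constant) the characteristic polynomial of $M|_V$.

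Next, in the basis $u_m := (x-t_3)^m$, $m=0,\dots,N$, plugging $y = \sum_m c_m u_m$ into $M(y) = q y$ and matching coefficients of $(x-t_3)^m$ reproduces exactly the three-term recurrence (\ref{eq:Hlci}). Consequently the matrix $\mathcal{M}$ of $M|_V$ in this basis is tridiagonal, with
\[\mathcal{M}_{m,m+1} = (t_1-t_3)(t_2-t_3)(m+1)(m+\gamma_3),\qquad \mathcal{M}_{m+1,m} = (m-N)(m+\beta),\]
and an explicit diagonal read off from (\ref{eq:Hlci}). For $0 \leq m \leq N-1$ the factors $m+1,\ m+\gamma_3,\ m+\beta$ are strictly positive (using $\gamma_3>0$ and $\beta>0$), while $m-N<0$ and, by hypothesis, $(t_1-t_3)(t_2-t_3)<0$. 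Hence $\mathcal{M}_{m,m+1}$ and $\mathcal{M}_{m+1,m}$ are both nonzero and of the same sign, so their ratio is positive. Setting $d_0=1$ and $d_{m+1}^2/d_m^2 := \mathcal{M}_{m,m+1}/\mathcal{M}_{m+1,m}$, the diagonal conjugation $D\mathcal{M} D^{-1}$ with $D = \mathrm{diag}(d_0,\dots,d_N)$ is a real symmetric tridiagonal matrix whose off-diagonals $\pm\sqrt{\mathcal{M}_{m,m+1}\mathcal{M}_{m+1,m}}$ are all nonzero.

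Finally, since $D\mathcal{M} D^{-1}$ is real symmetric, its eigenvalues are real; and since each eigenvector of a tridiagonal matrix with nonzero off-diagonals is determined up to scale by its first entry (via the three-term recurrence), the geometric multiplicity of every eigenvalue is one, which in the real symmetric setting forces algebraic multiplicity one as well. Conjugating back through $D$, the $N+1$ roots of $c_{N+1}(q)$ are real and pairwise distinct. The only step carrying real content is the initial reformulation: the relation $\alpha+\beta+1=\sum\gamma_j$ together with $\alpha=-N$ is exactly what makes $M$ preserve $V$, and once the Jacobi picture is in place the signs $\gamma_3>0$, $\beta>0$ and $(t_1-t_3)(t_2-t_3)<0$ are precisely the conditions that make the Jacobi matrix similar to a real symmetric one. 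Beyond this bookkeeping, I expect no serious obstacle.
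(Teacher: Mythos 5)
Your proof is correct, but it is not the paper's route: Theorem \ref{thm:Sturm} is quoted from \cite{CKLT}, where (as the paper notes) it is proved by the standard method of Sturm sequences, and the paper's own proof of the companion Theorem \ref{thm:Sturm1} follows the same hands-on pattern — an induction on $m$ showing that the real roots of $c_{m}(q)$ and $c_{m-1}(q)$ strictly interlace, driven by the sign properties (P1)--(P2) read off from the recursion (\ref{eq:Hlci}). Your Jacobi-matrix packaging of the same three-term recurrence is a genuine alternative: the hypotheses $\gamma_3>0$, $\beta>0$, $m-N<0$ and $(t_1-t_3)(t_2-t_3)<0$ make both off-diagonal entries negative, so the matrix is diagonally similar to a real symmetric tridiagonal matrix with nonvanishing off-diagonals, and reality plus simplicity of the spectrum (and even Cauchy interlacing, which the Sturm argument must prove by hand) drop out at once. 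Two details are worth nailing down. First, the identification of $c_{N+1}$ with the characteristic polynomial up to a nonzero constant is cleanest via the standard three-term recurrence for the characteristic polynomials of the leading principal submatrices, which after rescaling $c_m$ by $\prod_{j<m}\mathcal{M}_{j,j+1}$ is literally (\ref{eq:Hlci}); alternatively you can bypass the multiplicity bookkeeping entirely by observing that any eigenvector of $\mathcal{M}$ with vanishing first component is forced to vanish identically by the recurrence, so each of the $N+1$ distinct real eigenvalues is a root of the degree-$(N+1)$ polynomial $c_{N+1}(q)$, which already finishes the proof. Second, and this is where the comparison becomes interesting: in Theorem \ref{thm:Sturm1} one has $\gamma_3=\beta=\tfrac12-n_3<0$, so the individual factors $m+\gamma_3$ and $m+\beta$ change sign at $m=n_3$ and the naive Sturm-sequence positivity fails, which is why the paper develops the case analysis of Section 5; but in your formulation these two factors change sign \emph{simultaneously}, so the product $\mathcal{M}_{m,m+1}\mathcal{M}_{m+1,m}$ remains positive and the same symmetrization goes through verbatim — your method would actually unify Theorems \ref{thm:Sturm} and \ref{thm:Sturm1}.
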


The above theorem was proved in \cite{CKLT} by applying the standard method of Sturm sequence. In this paper, we prove the following result.

\begin{theorem}
\label{thm:Sturm1} Let $t_j, \gamma_j$ are all real. Assume that there are integers $n_0\ge n_3\geq 1$ such that
\begin{equation}\label{t-0}(t_{1} -t_{3})(t_{2}- t_{3}) <0,\quad -\alpha=N=n_0+n_3,\end{equation}
\begin{equation}\label{tt-0}\gamma_3=\beta=\gamma_{1}+\gamma_{2}+\gamma_{3}-1-\alpha=\tfrac{1}{2}-n_3<0.\end{equation}
Then the
equation $c_{N+1} (q)=0$ has all its roots real and unequal.
\end{theorem}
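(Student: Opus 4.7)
My plan is to recast the equation $c_{N+1}(q)=0$ as the eigenvalue equation of a real tridiagonal $(N+1)\times(N+1)$ matrix, and then to show that this matrix is similar to a symmetric Jacobi matrix with strictly positive off-diagonal entries. Standard spectral theory of Jacobi matrices will then yield $N+1$ real and pairwise distinct eigenvalues, which are exactly the roots of $c_{N+1}(q)$.

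First, I would rewrite the recurrence (\ref{eq:Hlci}) in the form
\[ q\,c_m(q)\;=\;B_m\,c_{m-1}(q)\;-\;mP_m\,c_m(q)\;+\;D_m\,c_{m+1}(q), \]
where $D_m=(t_1-t_3)(t_2-t_3)(m+1)(m+\gamma_3)$, $B_m=(m-1+\alpha)(m-1+\beta)$, and $P_m\in\mathbb{R}$ is the bracketed coefficient in (\ref{eq:Hlci}). Setting $\vec c(q)=(c_0(q),\dots,c_N(q))^{\top}$, the condition $c_{N+1}(q)=0$ becomes $M\vec c(q)=q\vec c(q)$ for the real tridiagonal matrix $M$ with entries $M_{m,m-1}=B_m$, $M_{m,m}=-mP_m$, $M_{m,m+1}=D_m$. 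Because $c_{N+1}(q)$ has degree $N+1$ and $c_0(q)\equiv 1\neq 0$, its roots coincide with the $N+1$ eigenvalues of $M$.

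Second, using $-\alpha=N=n_0+n_3$ and $\gamma_3=\beta=\tfrac{1}{2}-n_3$, I would compute, for $0\le m\le N-1$,
\[ D_m\,B_{m+1}\;=\;(t_1-t_3)(t_2-t_3)\,(m+1)(m-N)\,\bigl(m+\tfrac{1}{2}-n_3\bigr)^{2}. \]
The algebraic point is that the coincidence $\gamma_3=\beta$ makes the same linear factor $m+\gamma_3=m+\beta$ appear in $D_m$ and in $B_{m+1}$, forcing the \emph{perfect square} $(m+\tfrac{1}{2}-n_3)^{2}$. Since $m,n_3\in\mathbb{Z}$ one has $m+\tfrac{1}{2}-n_3\neq 0$, so this square is strictly positive. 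Combined with $(t_1-t_3)(t_2-t_3)<0$ and $(m+1)(m-N)<0$ for $0\le m\le N-1$, this gives $D_mB_{m+1}>0$ throughout.

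Third, I would set $\sigma_0:=1$ and $\sigma_{m+1}:=\sigma_m\sqrt{B_{m+1}/D_m}>0$, and conjugate by $\Sigma=\operatorname{diag}(\sigma_0,\dots,\sigma_N)$. Then $\widetilde M:=\Sigma^{-1}M\Sigma$ is a real symmetric tridiagonal (Jacobi) matrix whose off-diagonal entries equal $\sqrt{D_mB_{m+1}}>0$. The classical theorem on Jacobi matrices with non-vanishing off-diagonals yields $N+1$ real and pairwise distinct eigenvalues for $\widetilde M$, hence for $M$, hence for $c_{N+1}(q)$, completing the proof.

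The main difficulty, and the reason a direct Sturm-sequence analysis (as in Theorem \ref{thm:Sturm}, \cite{CKLT}) breaks down here, is that $\gamma_3=\tfrac{1}{2}-n_3<0$ flips the sign of $D_m$ at $m=n_3$, destroying the naive sign pattern. The rescue is the identity $\beta=\gamma_3$, which supplies the perfect square in $D_mB_{m+1}$ and restores positivity; I expect that recognising and exploiting this squared structure is the decisive step, after which Jacobi matrix spectral theory immediately closes the argument.
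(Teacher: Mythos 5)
Your proof is correct, but it takes a genuinely different route from the paper. The paper proves the result by a modified Sturm--sequence induction: using the signs of the leading coefficients of the $c_m$ (property (P1)) and the sign of $c_{m+1}c_{m-1}$ at a root of $c_m$ (property (P2)), it shows by a three-case analysis (for $k\le n_3-1$, $k=n_3$, and $k\ge n_3+1$) that the roots of $c_{k+1}$ are real and strictly interlace those of $c_k$; the delicate point is precisely the sign flip of $m+\gamma_3$ at $m=n_3$, which breaks the naive Sturm pattern and forces the case distinction. You instead symmetrize the three-term recurrence: the identity $\beta=\gamma_3$ makes $D_mB_{m+1}=(t_1-t_3)(t_2-t_3)(m+1)(m-N)\bigl(m+\tfrac12-n_3\bigr)^2>0$ a perfect square times a positive quantity, so the tridiagonal matrix $M$ is diagonally similar to a real symmetric Jacobi matrix with nonvanishing off-diagonal entries, and simplicity and reality of the spectrum follow at once. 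This is cleaner, localizes the role of the hypothesis $\beta=\gamma_3$ in a single algebraic identity (the individual sign changes of $D_m$ and $B_{m+1}$ at $m=n_3$ cancel in the product), and would reprove Theorem \ref{thm:Sturm} in the same stroke; the paper's argument, by contrast, yields the interlacing of the roots of successive $c_m$ explicitly (which your route also gives, via Cauchy interlacing, if one wants it). Two small points to tighten: the off-diagonal entries of $\widetilde M$ are $\operatorname{sgn}(D_m)\sqrt{D_mB_{m+1}}$, hence negative for $m\le n_3-1$ where $D_m<0$ --- this is harmless since only their nonvanishing is used; and the identification of the roots of $c_{N+1}$ with the eigenvalues of $M$ \emph{including multiplicities} deserves one more line, e.g.\ the standard identity $\det(qI-M)=\bigl(\prod_{m=0}^{N}D_m\bigr)c_{N+1}(q)$ (valid because every $D_m\neq0$, as $m+\gamma_3$ is a half-odd integer), or the observation that every eigenvector of $M$ has nonzero first component and is therefore proportional to $(c_0(\lambda),\dots,c_N(\lambda))$.
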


\begin{proof}Under our assumptions (\ref{t-0})-(\ref{tt-0}), we have
\[-(m-1+\alpha)>0,\quad \forall\, m\in [1, N],\]
\[m+\gamma_3=m+\beta=m+\tfrac{1}{2}-n_3\begin{cases}<0\quad\text{if}\; m\leq n_3-1,\\>0\quad\text{if}\; m\geq n_3.\end{cases}\]
Together with the recursive formula (\ref{eq:Hlci}), we easily obtain the following properties:

{\bf (P1)} Up to a positive constant, the leading term in $c_{m}(q)$ is
\[\begin{cases}q^m\quad\text{if}\; 1\leq m\leq n_3,\\(-1)^{m-n_3}q^m\quad\text{if}\;  n_3\leq m\leq N+1.\end{cases}\]

{\bf (P2)} If $c_{m}(q)=0$ and $c_{m-1} (q)\neq 0$ for $q\in \mathbb{R}$, then
\[c_{m+1} (q)c_{m-1} (q)\begin{cases}<0\quad\text{if}\; 1\leq m\leq N, m\neq n_3,\\>0\quad\text{if}\;  m= n_3.\end{cases}\]
Therefore, $c_m(q)$ is not a Sturm sequence. However, we can still show that the polynomial $c_{m}(q)$ $(1\leq m\leq N+1)$ has $m$ real
distinct roots $s_{i}^{(m)}$ $(i=1,\dots, m)$ such that
\[
s_{1}^{(m)}<s_{1}^{(m-1)}<s_{2}^{(m)}<s_{2}^{(m-1)}<\dots<s_{m-1}%
^{(m)}<s_{m-1}^{(m-1)}<s_{m}^{(m)}%
\]
by induction on $m$. The case $m=1$ is trivial. Let $1\le k \le N$ and
assume that the statement is true for $m\leq k$. From the assumption of the
induction,
\begin{equation}\label{s-k}
s_{1}^{(k)}<s_{1}^{(k-1)}<s_{2}^{(k)}<s_{2}^{(k-1)}<\dots
<s_{k-1}^{(k)}<s_{k-1}%
^{(k-1)}<s_{k}^{(k)}.
\end{equation}

{\bf Case 1}. We consider $k\leq n_3-1$.

Then {\bf (P1)} implies
\begin{equation}\label{ck-1}
\lim_{q \rightarrow-\infty} c_{k-1}(q) = (-1)^{k-1}\infty, \; \lim_{q \rightarrow
+\infty} c_{k-1}(q) = + \infty.
\end{equation}
Since $s_{j}^{(k-1)}$, $1\le j\le k-1$, are all the roots of  $c_{k-1}$, it follows from (\ref{s-k}) and (\ref{ck-1}) that
\begin{equation}\label{c-k-1-k}
c_{k-1}(s_i^{(k)})\sim (-1)^{k-i},\quad \forall i\in [1,k].
\end{equation}
Here $c\sim (-1)^j$ means $c=(-1)^j\tilde{c}$ for some $\tilde{c}>0$. Then we see from {\bf (P2)} that
\[c_{k+1}(s_i^{(k)})\sim (-1)^{k+1-i},\quad \forall i\in [1,k].\]
On the other hand, {\bf (P1)} implies
\[\lim_{q \rightarrow-\infty} c_{k+1}(q) = (-1)^{k+1}\infty, \; \lim_{q \rightarrow
+\infty} c_{k+1}(q) = + \infty.\]
From here,
it follows from the intermediate value theorem that the polynomial
$c_{k+1}(q)$ has $k+1$ real distinct roots $s_{i}^{(k+1)} $ $(1\leq i\leq
k+1)$ such that \begin{equation}\label{sk1}s_{1}^{(k+1)}<s_{1}^{(k)}<s_{2}^{(k+1)}%
<s_{2}^{(k)}<\dots<s_{k}^{(k+1)}<s_{k}^{(k)}<s_{k+1}^{(k+1)}.\end{equation}

{\bf Case 2}. We consider $k=n_3$.

Then (\ref{c-k-1-k}) still holds, and so {\bf (P2)} gives
\[c_{k+1}(s_i^{(k)})\sim (-1)^{k-i},\quad \forall i\in [1,k],\]
which is different from Case 1! However, {\bf (P1)} says that the leading term of $c_{k+1}=c_{n_3+1}$ is $-q^{k+1}$ (up to a positive constant), which implies
\[\lim_{q \rightarrow-\infty} c_{k+1}(q) = (-1)^{k}\infty, \; \lim_{q \rightarrow
+\infty} c_{k+1}(q) = - \infty.\]
This is also different from Case 1! Thanks to these two facts, we see again that $c_{k+1}(q)$ has $k+1$ real distinct roots $s_{i}^{(k+1)} $ $(1\leq i\leq
k+1)$ such that (\ref{sk1}) holds.

{\bf Case 3}. We consider $n_3+1\leq k\leq N$.

Then {\bf (P1)} says that the leading term of $c_{k-1}$ is $(-1)^{k-1-n_3}q^{k-1}$, which implies
\[\lim_{q \rightarrow-\infty} c_{k-1}(q) = (-1)^{-n_3}\infty, \; \lim_{q \rightarrow
+\infty} c_{k-1}(q) = (-1)^{k-1-n_3} \infty.\]
From here and (\ref{s-k}), we obtain
\[c_{k-1}(s_i^{(k)})\sim (-1)^{i-1-n_3},\quad \forall i\in [1,k],\]
and so {\bf (P2)} implies
\[c_{k+1}(s_i^{(k)})\sim (-1)^{i-n_3},\quad \forall i\in [1,k].\]
Recall {\bf (P1)} that the leading term of $c_{k+1}$ is $(-1)^{k+1-n_3}q^{k+1}$, which gives
\[\lim_{q \rightarrow-\infty} c_{k+1}(q) = (-1)^{-n_3}\infty, \; \lim_{q \rightarrow
+\infty} c_{k+1}(q) = (-1)^{k+1-n_3} \infty.\]
Therefore, we conclude again that $c_{k+1}(q)$ has $k+1$ real distinct roots $s_{i}^{(k+1)} $ $(1\leq i\leq
k+1)$ such that (\ref{sk1}) holds.

This proves that $c_{N+1} (q)=0$ has all its roots real and unequal.
The proof is complete.\end{proof}

\subsection {}

Recalling GLE (\ref{GLE-1}),
we let $y(z)$ be a solution of GLE,
\begin{equation}
\bigg(\frac{d^{2}}{dz^{2}}-\sum_{k=0}^{3}n_{k}(n_{k}%
+1)\wp(z+\tfrac{\omega_{k}}{2};\tau)-E\bigg)  y(z)=0. \label{InoEF}%
\end{equation}
Set $x=\wp(z)$ and recall $e_k$ defined in Remark \ref{remark4-2}. Applying the formula
\[
\wp(z+\tfrac{\omega_{i}}{2})=e_{i}+\frac{(e_{i}-e_{i^{\prime}})(e_{i}%
-e_{i^{\prime \prime}})}{\wp(z)-e_{i}},\; \, \text{where $\{i,i^{\prime
},i^{\prime \prime}\}=\{1,2,3\}$,}
\]
it is easy to see that equation (\ref{InoEF}) is equivalent to
\begin{align}
&  \left \{  \frac{d^{2}}{dx^{2}}+\frac{1}{2}\bigg(  \sum_{i=1}^3\frac{1}{x-e_{i}}\bigg)  \frac{d}{dx}-\frac{1}{4\prod_{j=1}%
^{3}(x-e_{j})}\bigg(  \tilde{C}+\right. \label{Heun}\\
&  \; \left.n_{0}(n_{0}+1)x+\sum_{i=1}^{3}n_{i}%
(n_{i}+1)\frac{(e_{i}-e_{i^{\prime}})(e_{i}-e_{i^{\prime \prime}})}{x-e_{i}%
}\bigg)  \right \}  \tilde{f}(x)=0,\nonumber
\end{align}
where $\tilde{f}(\wp(z))=y(z)$ and $\tilde{C}=E+\sum_{i=1}^{3}n_{i}%
(n_{i}+1)e_{i}$. Note that $e_{1}+e_{2}+e_{3}=0$. It is easy to see that the
Riemann scheme of equation (\ref{Heun}) is
\[%
\begin{Bmatrix}
e_{1} & e_{2} & e_{3} & \infty \\
\frac{-n_{1}}{2} & \frac{-n_{2}}{2} & \frac{-n_{3}}{2} & \frac{-n_{0}}{2}\\
\frac{n_{1}+1}{2} & \frac{n_{2}+1}{2} & \frac{n_{3}+1}{2} & \frac{n_{0}+1}{2}%
\end{Bmatrix}
.
\]

Let $\tilde{\alpha}_{i}\in\{-n_{i}/2,(n_{i}+1)/2\}$ for each $i\in \{0,1,2,3\}$ such that $N:=-\sum \tilde{\alpha}_{i}\in\mathbb{Z}_{\geq 0}$.
Set
\[
\Phi^{(\tilde{\alpha}_{1},\tilde{\alpha}_{2},\tilde{\alpha}_{3})}%
(x)=\prod_{j=1}^{3}(x-e_{j})^{\tilde{\alpha}_{j}}\quad \text{and}\quad \tilde
{f}(x)=\Phi^{(\tilde{\alpha}_{1},\tilde{\alpha}_{2},\tilde{\alpha}_{3}%
)}(x)f(x).
\]
Then $\tilde{f}(x)$ solves equation (\ref{Heun}) is equivalent to that $f(x)$
satisfies
\begin{align}
\frac{d^{2}f(x)}{dx^{2}}&+  \sum_{i=1}^{3}\frac{2\tilde{\alpha}_{i}+\frac
{1}{2}}{x-e_{i}}\frac{df(x)}{dx}+\left(  \frac{(\sum_{i=1}^3\tilde{\alpha}_{i}
-\frac{n_{0}}{2})(\sum_{i=1}^3\tilde{\alpha}_{i}+\frac{n_{0}+1}{2})x}{(x-e_{1}%
)(x-e_{2})(x-e_{3})}\right. \nonumber \\
&  \left.  -\frac{\frac{E}{4}+e_{1}(\tilde{\alpha}_{2}+\tilde{\alpha}_{3}%
)^{2}+e_{2}(\tilde{\alpha}_{1}+\tilde{\alpha}_{3})^{2}+e_{3}(\tilde{\alpha
}_{1}+\tilde{\alpha}_{2})^{2}}{(x-e_{1})(x-e_{2})(x-e_{3})}\right)  f(x)=0.
\label{Heun2}%
\end{align}
This equation is in the form of equation (\ref{eq:Heunt1t2t3}) by setting
\[
\alpha=\tilde{\alpha}_{0}+\tilde{\alpha}_{1}+\tilde{\alpha}_{2}+\tilde{\alpha
}_{3}=-N,\quad N\in\mathbb{Z}_{\geq 0},
\]%
\[
\beta=-\tilde{\alpha}_{0}+\tfrac{1}{2}+\tilde{\alpha}_{1}+\tilde{\alpha}%
_{2}+\tilde{\alpha}_{3},
\]%
\[
\gamma_{i}=2\tilde{\alpha}_{i}+\tfrac{1}{2},\quad t_{i}=e_{i},\quad i=1,2,3,
\]%
\[
q= \tfrac{E}{4}+e_{1}(\tilde{\alpha}_{2}+\tilde{\alpha}_{3})^{2}%
+e_{2}(\tilde{\alpha}_{1}+\tilde{\alpha}_{3})^{2}+e_{3}(\tilde{\alpha}%
_{1}+\tilde{\alpha}_{2})^{2}-e_{3}\alpha \beta .
\]
It is
well known that $e_{j}=e_{j}(\tau)\in \mathbb{R}$ and $e_{1}>e_{3}>e_{2}$
for $\tau \in i\mathbb{R}_{>0}$, i.e.
\begin{equation}
(e_{1}-e_{3})(e_{2}-e_{3})<0\quad \text{if}\; \tau \in i\mathbb{R}_{>0}.
\label{eqe}%
\end{equation}
Write $f(x)=\sum_{r=0}^{\infty}c_{r}(x-e_{3})^{r}$ with $c_{0}=1$, then
$c_{r}$ is a polynomial in $q$ and equivalently in $E$ of degree $r$. We denote it by $c_{r}(E)$. Then
it follows from Proposition \ref{prop:Heunpolym}  that if $c_{N+1}(E)=0$, then
the differential equation (\ref{Heun}) has a "polynomial" solution $\tilde
{f}(x)=\Phi^{(\tilde{\alpha}_{1},\tilde{\alpha}_{2},\tilde{\alpha}_{3}%
)}(x)f(x)$ in the sense that $f(x)$ is a polynomial of degree no more than $N$.

Let $P_{\tilde{\alpha}_{0}, \tilde{\alpha}_{1}, \tilde{\alpha}_{2} ,
\tilde{\alpha}_{3}}(E)$ be \emph{the monic polynomial obtained by normalising
$c_{N+1} (E) $}. Then
\[
\deg P_{\tilde{\alpha}_{0}, \tilde{\alpha}_{1}, \tilde{\alpha}_{2} ,
\tilde{\alpha}_{3}}(E)= N+1= -\tilde{\alpha}_{0}-\tilde{\alpha}_{1}%
-\tilde{\alpha}_{2}-\tilde{\alpha}_{3}+1.
\]

Recall $n_{k}\in{\mathbb{Z}}_{\geq0}$ for all $k$.
We recall the following important result from \cite{Tak1}, which establishes
the precise relation between the spectral polynomial $Q^{(n_{0},n_{1}%
,n_{2},n_{3})}(E)$ and the aforementioned polynomial $P_{\tilde{\alpha}%
_{0},\tilde{\alpha}_{1},\tilde{\alpha}_{2},\tilde{\alpha}_{3}}(E)$.
For our purpose, we only consider the case that $\sum_k n_{k}$ is even, then $Q(E)
=Q^{(n_{0},n_{1},n_{2},n_{3})}(E)$ is
written as $Q(E)=P^{(0)} (E) P^{(1)} (E) P^{(2)} (E) P^{(3)} (E)$,
where{\allowdisplaybreaks
\begin{align*}
&  P^{(0)} (E) = P_{-n_{0}/2, -n_{1}/2, -n_{2}/2, -n_{3}/2}(E),\\
&  P^{(1)} (E) = \left \{
\begin{array}
[c]{ll}%
P_{-n_{0}/2, -n_{1}/2, (n_{2} +1)/2, (n_{3} +1)/2}(E), & n_{0} +n_{1} \geq
n_{2} +n_{3} +2 ,\\
1 , & n_{0} +n_{1} = n_{2} +n_{3} ,\\
P_{(n_{0}+1)/2, (n_{1} +1)/2, -n_{2} /2, -n_{3} /2}(E), & n_{0} +n_{1} \leq
n_{2} +n_{3} -2,
\end{array}
\right. \\
&  P^{(2)} (E) = \left \{
\begin{array}
[c]{ll}%
P_{-n_{0}/2, (n_{1}+1)/2, -n_{2} /2, (n_{3} +1)/2}(E), & n_{0} +n_{2} \geq
n_{1} +n_{3} +2 ,\\
1 , & n_{0} +n_{2} = n_{1} +n_{3} ,\\
P_{(n_{0}+1)/2, -n_{1}/2, (n_{2} +1)/2, -n_{3} /2}(E), & n_{0} +n_{2} \leq
n_{1} +n_{3} -2,
\end{array}
\right. \\
&  P^{(3)} (E) = \left \{
\begin{array}
[c]{ll}%
P_{-n_{0}/2, (n_{1}+1)/2, (n_{2} +1)/2, -n_{3} /2}(E), & n_{0} +n_{3} \geq
n_{1} +n_{2} +2 ,\\
1 , & n_{0} +n_{3} = n_{1} +n_{2} ,\\
P_{(n_{0}+1)/2, -n_{1} /2, -n_{2} /2, (n_{3} +1)/2}(E), & n_{0} +n_{3} \leq
n_{1} +n_{2} -2.
\end{array}
\right.
\end{align*}
}Furthermore, it was shown in \cite[Theorem 3.2]{Tak1} that the equations
$P^{(i)}(E)=0$ and $P^{(j)}(E)=0$ $(i\neq j)$ \textit{do not have common
solutions}.

Recall the following result proved in \cite{CKLT}.

\begin{theorem}\cite{CKLT}
\label{thm:Qrealdist} Suppose $n_{0}, n_{1}, n_{2} , n_{3} \in{\mathbb{Z}%
}_{\geq0}$ with $\max_k n_k\geq 1$. If $n_{3}=0$, $n_{0} \geq n_{1} +n_{2} -1$ and $\tau \in i
{\mathbb{R}}_{>0}$, then the zeros of $Q^{(n_{0}%
,n_{1},n_{2},n_{3})} (E)$ are all real and unequal.
\end{theorem}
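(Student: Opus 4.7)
The plan is to exploit the factorization
\[
Q^{(n_0,n_1,n_2,n_3)}(E) = P^{(0)}(E)\,P^{(1)}(E)\,P^{(2)}(E)\,P^{(3)}(E)
\]
recalled just before the statement, and reduce real-distinctness of the zeros of $Q$ to real-distinctness of the zeros of each nontrivial factor $P^{(i)}$, using the Sturm-sequence criterion Theorem \ref{thm:Sturm}. Since \cite[Theorem 3.2]{Tak1} already guarantees that distinct $P^{(i)}$'s share no roots, it suffices to show that the zeros of each nontrivial $P^{(i)}$ are real and simple.

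First I would substitute $n_3=0$ into the four branch formulas for $P^{(0)},\ldots,P^{(3)}$. Under consideration is the parity case $\sum_k n_k=n_0+n_1+n_2\in 2\mathbb{Z}$, in which combining the hypothesis $n_0\geq n_1+n_2-1$ with parity upgrades the inequality to $n_0\geq n_1+n_2$. This upgrade immediately excludes the ``small $n_0$'' branches of $P^{(1)},P^{(2)},P^{(3)}$ (which would require $n_0+n_j\leq n_k+n_\ell-2$), so every nontrivial factor takes the form $P_{\tilde{\alpha}_0,\tilde{\alpha}_1,\tilde{\alpha}_2,\tilde{\alpha}_3}$ with $\tilde{\alpha}_0=-n_0/2$ and $\tilde{\alpha}_3\in\{0,1/2\}$.

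Next I would verify the hypotheses of Theorem \ref{thm:Sturm} in each such branch, with $t_j=e_j$ and $\gamma_j=2\tilde{\alpha}_j+\tfrac12$. The sign condition $(e_1-e_3)(e_2-e_3)<0$ holds automatically on $i\mathbb{R}_{>0}$ by \eqref{eqe}; the condition $\gamma_3>0$ is immediate from $\tilde{\alpha}_3\in\{0,1/2\}$; and the condition $\alpha=-N\in-\mathbb{Z}_{\geq 0}$ follows from $\sum n_k$ being even in each nontrivial branch. The key positivity
\[
\beta \;=\; \tfrac12 + \tilde{\alpha}_1+\tilde{\alpha}_2+\tilde{\alpha}_3-\tilde{\alpha}_0
\]
reduces in the four cases to $(n_0-n_1-n_2+1)/2$, $(n_0-n_1+n_2+3)/2$, $(n_0+n_1-n_2+3)/2$, and $(n_0+n_1+n_2+3)/2$ respectively; each is $\geq 1/2$ once $n_0\geq n_1+n_2$ is used. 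Applying Theorem \ref{thm:Sturm} then delivers real simple zeros for every nontrivial $P^{(i)}$, and combining with \cite[Theorem 3.2]{Tak1} concludes the proof.

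The main obstacle is the branch bookkeeping: one must handle all three dichotomies $(n_0+n_1)$ vs $(n_2+n_3)$, $(n_0+n_2)$ vs $(n_1+n_3)$, $(n_0+n_3)$ vs $(n_1+n_2)$ simultaneously and check that the parity-augmented hypothesis $n_0\geq n_1+n_2$ kills every branch that would violate the positivity of $\beta$. Once the enumeration is tabulated, each individual branch is a one-line arithmetic check, and the real substance of the argument is entirely contained in Theorem \ref{thm:Sturm}.
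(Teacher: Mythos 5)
The paper does not actually prove this statement---it is quoted from \cite{CKLT}---so there is no internal proof to compare against line by line. Your strategy (factor $Q^{(n_0,n_1,n_2,n_3)}$ into the Heun polynomials $P_{\tilde{\alpha}_0,\tilde{\alpha}_1,\tilde{\alpha}_2,\tilde{\alpha}_3}$, invoke \cite[Theorem 3.2]{Tak1} to rule out common roots, and verify the hypotheses of Theorem \ref{thm:Sturm} branch by branch) is exactly the mechanism this section of the paper is built around, and your arithmetic in the branches you treat is correct: with $n_3=0$ and $n_0\geq n_1+n_2$ one indeed gets $\gamma_3\in\{1/2,3/2\}>0$ and $\beta\in\{\tfrac{n_0-n_1-n_2+1}{2},\tfrac{n_0-n_1+n_2+3}{2},\tfrac{n_0+n_1-n_2+3}{2},\tfrac{n_0+n_1+n_2+3}{2}\}>0$ in the four nontrivial factors.

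However, there is a genuine gap: you silently restrict to $\sum_k n_k=n_0+n_1+n_2$ \emph{even}. The theorem carries no parity hypothesis, and the odd case cannot be skipped---the paper applies Theorem \ref{thm:Qrealdist} precisely to odd-sum configurations, e.g.\ to $Q^{(l_0,l_1,l_2,0)}$ with $\sum l_k=2n_0-1$ in the proof of Theorem \ref{thm:Qrealdist2}, and to case (\ref{ffff2}) of Theorem \ref{sharp-realroot} where $n_0+n_1+n_2$ may have either parity. The four-factor decomposition $Q=P^{(0)}P^{(1)}P^{(2)}P^{(3)}$ you rely on is stated in the paper only ``in the case that $\sum_k n_k$ is even''; when the sum is odd, integrality of $N=-\sum_i\tilde{\alpha}_i$ forces an \emph{odd} number of the exponents $\tilde{\alpha}_i$ to equal $(n_i+1)/2$, so $Q$ factors into a different collection of polynomials $P_{\tilde{\alpha}}$ that must be enumerated and checked separately. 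Moreover, at least one of those odd-parity factors (the one with $\tilde{\alpha}_0=(n_0+1)/2$ and $\tilde{\alpha}_1=-n_1/2$, $\tilde{\alpha}_2=-n_2/2$, $\tilde{\alpha}_3=0$, which occurs exactly when $n_0=n_1+n_2-1$) has $\beta=-\tfrac{n_0+n_1+n_2}{2}<0$, so Theorem \ref{thm:Sturm} does not apply to it and one must argue differently (here $N=0$, so the factor is linear with real coefficients and its single root is trivially real, but this needs to be said). Until the odd-sum factorization is written down and each of its factors is disposed of, the proof covers only half of the stated theorem.
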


Here we prove the following analogous result for new cases.

\begin{theorem}
\label{thm:Qrealdist1} Suppose $n_{0}, n_{1}, n_{2} , n_{3} \in{\mathbb{Z}%
}_{\geq0}$ satisfying $\max_k n_k\geq 1$ and \begin{equation}\label{nkequal}n_{0}+n_3 = n_{1} +n_{2}.\end{equation} Then for $\tau \in i
{\mathbb{R}}_{>0}$, the zeros of $Q^{(n_{0}%
,n_{1},n_{2},n_{3})} (E)$ are all real and unequal.
\end{theorem}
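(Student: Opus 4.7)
The plan is to verify that each factor in Takemura's decomposition $Q^{(n_{0},n_{1},n_{2},n_{3})}(E) = P^{(0)}(E)P^{(1)}(E)P^{(2)}(E)P^{(3)}(E)$ has all roots real and distinct, and then to invoke \cite[Theorem 3.2]{Tak1} to conclude that different $P^{(i)}$'s share no common root. The first step is a symmetry reduction. The translations $z \mapsto z + \omega_{j}/2$ for $j=1,2,3$ leave the spectral polynomial invariant and permute the indices $\{0,1,2,3\}$ by the three nontrivial elements of the Klein four-group: $(0\,1)(2\,3)$, $(0\,2)(1\,3)$, and $(0\,3)(1\,2)$. Each of these permutations preserves the hypothesis $n_{0}+n_{3}=n_{1}+n_{2}$, so I may assume without loss of generality that $n_{0}=\max_{k}n_{k}$ (in particular $n_{0}\geq 1$). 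Under this normalization the equality $n_{0}+n_{3}=n_{1}+n_{2}$ forces $P^{(3)}(E)\equiv 1$, while $n_{0}\geq n_{1}$ and $n_{0}\geq n_{2}$ imply that $P^{(1)}$ and $P^{(2)}$ are either identically $1$ or fall into the first branch of Takemura's formula (the one with $\tilde{\alpha}_{0}=-n_{0}/2$).

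For the nontrivial $P^{(1)}$ (case $n_{0}>n_{2}$) and $P^{(2)}$ (case $n_{0}>n_{1}$), a direct computation of the corresponding Heun parameters, using $n_{0}+n_{3}=n_{1}+n_{2}$ to simplify, gives $\gamma_{3}=n_{3}+\tfrac{3}{2}>0$, $N=n_{0}-n_{2}-1\geq 0$, and $\beta=n_{2}+\tfrac{3}{2}>0$ for $P^{(1)}$, and analogously $\gamma_{3}=n_{3}+\tfrac{3}{2}>0$, $N=n_{0}-n_{1}-1\geq 0$, $\beta=n_{1}+\tfrac{3}{2}>0$ for $P^{(2)}$. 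Together with $(e_{1}-e_{3})(e_{2}-e_{3})<0$ for $\tau\in i\mathbb{R}_{>0}$ from (\ref{eqe}), all hypotheses of Theorem \ref{thm:Sturm} (i.e.\ \cite{CKLT}) are met, so both $P^{(1)}$ and $P^{(2)}$ have real and distinct roots. The remaining factor $P^{(0)}=P_{-n_{0}/2,-n_{1}/2,-n_{2}/2,-n_{3}/2}$ has $\gamma_{3}=\beta=\tfrac{1}{2}-n_{3}$ and $N=n_{0}+n_{3}$. When $n_{3}=0$ we have $\gamma_{3}=\beta=\tfrac{1}{2}>0$, so Theorem \ref{thm:Sturm} applies directly; when $n_{3}\geq 1$ both $\gamma_{3}$ and $\beta$ are negative, which is exactly the situation covered by the newly established Theorem \ref{thm:Sturm1}, whose assumptions ($n_{0}\geq n_{3}\geq 1$, $N=n_{0}+n_{3}$, $\gamma_{3}=\beta=\tfrac{1}{2}-n_{3}$) are all satisfied by the normalization above.

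The main obstacle is precisely the case $P^{(0)}$ with $n_{3}\geq 1$, where $\beta$ and $\gamma_{3}$ are both negative and the classical Sturm-sequence argument of \cite{CKLT} no longer produces a Sturm chain. Theorem \ref{thm:Sturm1} is tailored exactly for this regime, tracking sign changes of the polynomials $c_{m}(q)$ separately in the three ranges $1\leq m\leq n_{3}-1$, $m=n_{3}$, and $n_{3}+1\leq m\leq N$, and it is precisely this sign-analysis step that does the real work of the proof. Once Theorem \ref{thm:Sturm1} is at hand, the proof of Theorem \ref{thm:Qrealdist1} amounts to the symmetry reduction plus the bookkeeping of Heun parameters above.
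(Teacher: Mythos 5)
Your proposal is correct and follows essentially the same route as the paper: reduce to $n_{0}=\max_{k}n_{k}$ via the half-period translations, check the Heun parameters of each factor $P^{(j)}$ in Takemura's factorization, apply Theorem \ref{thm:Sturm} to $P^{(1)},P^{(2)}$ and the new Theorem \ref{thm:Sturm1} to $P^{(0)}$ when $n_{3}\geq 1$, and invoke \cite[Theorem 3.2]{Tak1} for the absence of common roots. The only (immaterial) difference is that for $n_{3}=0$ you apply Theorem \ref{thm:Sturm} directly to $P^{(0)}$, whereas the paper simply cites Theorem \ref{thm:Qrealdist} for that case.
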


\begin{proof}
By changing variable $z\mapsto z+\frac{\omega_k}{2}$ in GLE (\ref{GLE-1}), we have
\begin{align}\label{Qnn}
&Q^{(n_{0},n_{1},n_{2},n_{3})}(E)=Q^{(n_{1},n_{0},n_{3},n_{2}
)}(E)\\
=&Q^{(n_{2},n_{3},n_{0},n_{1})}(E)=Q^{(n_{3},n_{2},n_{1},n_{0}
)}(E).\nonumber\end{align}
Therefore, we may always assume $n_0=\max n_k$ and then (\ref{nkequal}) implies $n_3=\min n_k$. If $n_3=0$, then this theorem follows from Theorem \ref{thm:Qrealdist}. Therefore, we only consider $n_3\ge 1$, i.e.
\begin{equation}\label{maxmin}n_0=\max n_k,\quad n_3=\min n_k\geq 1.\end{equation}

Note that $\sum n_k$ is even.
We only need to show that the zeros of each polynomial $P^{(j)}(E)$,
$j\in \{0,1,2,3\}$, are all real and unequal.

Since $P^{(0)}(E)=P_{-n_{0}/2, -n_{1}/2, -n_{2}/2, -n_{3}/2}(E)$, we have
\[\alpha=-\tfrac{n_{0}}{2}-\tfrac{n_{1}}{2}-\tfrac{n_{2}}{2}-\tfrac{n_{3}}{2}=-n_0-n_3,\quad N=-\alpha=n_0+n_3,\]
\[\gamma_3=\tfrac{1}{2}-n_3<0,\]
\begin{align*}\beta &=-\tilde{\alpha}_{0}+\tfrac{1}{2}+\tilde{\alpha}_{1}+\tilde{\alpha}%
_{2}+\tilde{\alpha}_{3}=\tfrac{n_{0}}{2}+\tfrac{1}{2}-\tfrac{n_{1}}{2}-\tfrac{n_{2}}{2}-\tfrac{n_{3}}{2}\\
&=\tfrac{1}{2}-n_3=\gamma_3<0.\end{align*}
Then we can apply Theorem \ref{thm:Sturm1} to see that the zeros of $P^{(0)}(E)$ are all real and distinct.

Clearly (\ref{nkequal}) and (\ref{maxmin}) imply $n_0+n_1\geq n_2+n_3$. Since $P^{(1)} (E)=1$ for $n_{0} +n_{1} = n_{2} +n_{3}$, we only need to consider $n_0+n_1\geq n_2+n_3+2$ and so
\[P^{(1)} (E)
=P_{-n_{0}/2, -n_{1}/2, (n_{2} +1)/2, (n_{3} +1)/2}(E).
\]
Then
\[\alpha=-\tfrac{n_{0}}{2}-\tfrac{n_{1}}{2}+\tfrac{n_{2}+1}{2}+\tfrac{n_{3}+1}{2}=n_2+1-n_0\leq 0,\quad N=-\alpha\geq 0,\]
\[\gamma_3=\tfrac{3}{2}+n_3>0,\]
\begin{align*}\beta &=-\tilde{\alpha}_{0}+\tfrac{1}{2}+\tilde{\alpha}_{1}+\tilde{\alpha}%
_{2}+\tilde{\alpha}_{3}=\tfrac{n_{0}}{2}+\tfrac{1}{2}-\tfrac{n_{1}}{2}+\tfrac{n_{2}+1}{2}+\tfrac{n_{3}+1}{2}\\
&=\tfrac{3}{2}+n_2>0.\end{align*}
Thus by Theorem \ref{thm:Sturm}, we see that the zeros of $P^{(1)}(E)$ are all real and distinct. The same argument shows that the zeros of $P^{(2)}(E)$ are all real and distinct. Finally, we note that $P^{(3)}(E)=1$.

In conclusion, the zeros of $Q^{(n_{0}%
,n_{1},n_{2},n_{3})} (E)$ are all real and distinct.
\end{proof}

\begin{theorem}
\label{thm:Qrealdist2} Suppose $n_{0}, n_{1}, n_{2} , n_{3} \in{\mathbb{Z}%
}_{\geq0}$ satisfying \begin{equation}\label{nkequal1}n_{0}+n_3 = n_{1} +n_{2}\pm 1.\end{equation} Then for $\tau \in i
{\mathbb{R}}_{>0}$, the zeros of $Q^{(n_{0}%
,n_{1},n_{2},n_{3})} (E)$ are all real and unequal.
\end{theorem}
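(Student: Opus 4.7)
My plan is to adapt the strategy of the proof of Theorem \ref{thm:Qrealdist1}. The new hypothesis $n_0+n_3 = n_1+n_2 \pm 1$ forces $\sum_k n_k$ to be \emph{odd}, so the factorization of $Q^{(n_0,n_1,n_2,n_3)}(E)$ into products of polynomials $P_{\tilde{\alpha}_0,\tilde{\alpha}_1,\tilde{\alpha}_2,\tilde{\alpha}_3}(E)$ must be the odd-parity version coming from \cite{Tak1}; here each admissible tuple $(\tilde{\alpha}_0,\tilde{\alpha}_1,\tilde{\alpha}_2,\tilde{\alpha}_3)$ with $\tilde{\alpha}_i\in\{-n_i/2,(n_i+1)/2\}$ now has an odd number of flipped entries so that $N=-\sum_i\tilde{\alpha}_i\in\mathbb{Z}_{\geq 0}$. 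The fact from \cite[Theorem 3.2]{Tak1} that distinct factors in this decomposition share no common root carries over, so it suffices to prove that each individual polynomial $P_{\tilde{\alpha}_0,\tilde{\alpha}_1,\tilde{\alpha}_2,\tilde{\alpha}_3}(E)$ has $N+1$ real and unequal zeros.

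First I would use the symmetries $(\ref{Qnn})$ to reduce to the case $n_0+n_3=n_1+n_2+1$ (swapping pair with pair when the sign is $-1$), and, within each pair, order $n_0\geq n_3$ and $n_1\geq n_2$. Then for each of the four admissible flip-patterns I would compute the Heun parameters
\[
\alpha=\tilde{\alpha}_0+\tilde{\alpha}_1+\tilde{\alpha}_2+\tilde{\alpha}_3,\quad
\beta=-\tilde{\alpha}_0+\tfrac{1}{2}+\tilde{\alpha}_1+\tilde{\alpha}_2+\tilde{\alpha}_3,\quad
\gamma_i=2\tilde{\alpha}_i+\tfrac{1}{2},
\]
and check in each case whether $(\beta>0,\gamma_3>0)$ (invoking the old Sturm-sequence Theorem \ref{thm:Sturm}) or whether $(\beta=\gamma_3=\tfrac{1}{2}-m<0$ for some positive integer $m)$ (invoking the new Theorem \ref{thm:Sturm1}). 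A direct computation using $n_0=n_1+n_2+1-n_3$ shows, for example, that flipping only $\tilde{\alpha}_1$ yields $N=n_2$ with $\beta=\tfrac{3}{2}+n_3>0$ and $\gamma_3=\tfrac{1}{2}-n_3$; flipping only $\tilde{\alpha}_0$ yields $N=n_3-1$ with $\beta=\gamma_3=\tfrac{1}{2}-n_3$, so Theorem \ref{thm:Sturm1} (with the roles $n_0\mapsto N, n_3\mapsto n_3$) applies. The remaining two patterns (which involve flipping three entries, or a single flip at positions where $\alpha$ would be positive) should yield by the same dichotomy either the positive-$\gamma_3,\beta$ regime or the regime covered by Theorem \ref{thm:Sturm1}. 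In particular, in the borderline situations (such as $n_3=0$, or $n_3>n_1+n_2$) some $P^{(k)}$ factors degenerate to $1$ and the remaining ones fall in the range of Theorem \ref{thm:Qrealdist}.

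The main obstacle will be the bookkeeping: unlike the even-sum case of Theorem \ref{thm:Qrealdist1}, where each $P^{(j)}$ had a uniform sign pattern, the odd-sum factorization forces the two regimes (positive vs.\ mixed-sign Heun parameters) to appear simultaneously among the four factors, so one must verify case-by-case that every admissible tuple actually lies in the scope of Theorem \ref{thm:Sturm} or Theorem \ref{thm:Sturm1}. A further subtle point is that Theorem \ref{thm:Sturm1} as stated requires $n_0\geq n_3\geq 1$; for the factor whose associated $N=n_3-1$ may be $0$ (when $n_3=1$), the conclusion is trivial ($P$ is linear), while for factors where the integer playing the role of $n_3$ is $0$ one instead has $\gamma_3,\beta>0$ and Theorem \ref{thm:Sturm} applies. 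Once these cases are handled, combining them with the non-overlap of roots between the distinct $P$ factors from \cite[Theorem 3.2]{Tak1} gives that $Q^{(n_0,n_1,n_2,n_3)}(\cdot;\tau)$ has only real, simple zeros for $\tau\in i\mathbb{R}_{>0}$, as desired.
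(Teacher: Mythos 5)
Your overall plan---run the odd-sum analogue of the factorization of $Q^{(n_0,n_1,n_2,n_3)}$ into Heun polynomials $P_{\tilde{\alpha}_0,\tilde{\alpha}_1,\tilde{\alpha}_2,\tilde{\alpha}_3}(E)$ and check each factor against Theorem \ref{thm:Sturm} or Theorem \ref{thm:Sturm1}---has a genuine gap: the factors that actually occur in the odd-sum decomposition do not satisfy the hypotheses of either Sturm-type theorem. Theorem \ref{thm:Sturm} needs $\gamma_3>0$ and $\beta>0$, while Theorem \ref{thm:Sturm1} needs the very special coincidence $\beta=\gamma_3=\tfrac12-n_3<0$; in the even case $n_0+n_3=n_1+n_2$ this coincidence holds for $P^{(0)}$, which is why the proof of Theorem \ref{thm:Qrealdist1} goes through. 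Under $n_0+n_3=n_1+n_2+1$ it fails. For instance, for the single flip at position $1$ (your first example) one computes $\gamma_3=\tfrac12-n_3$ but $\beta=n_1-n_3+\tfrac32$, not $\tfrac32+n_3$ as you wrote, and $\beta\neq\gamma_3$; for the single flip at position $0$ one gets $N=n_3-1$ but $\beta=\tfrac12-n_0-n_3\neq\gamma_3=\tfrac12-n_3$ unless $n_0=0$; the single flip at position $2$ gives $\gamma_3=\tfrac12-n_3<0$ with $\beta=n_2-n_3+\tfrac32$. So whenever $n_3\geq1$ at least two of the four factors land in a mixed-sign regime ($\gamma_3<0$, $\beta\neq\gamma_3$) that neither Theorem \ref{thm:Sturm} nor Theorem \ref{thm:Sturm1} covers, and the asserted dichotomy on which your whole case analysis rests is false. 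Closing the gap would require proving a new Sturm-type statement for these sign patterns, which is not available and whose interlacing argument would break because the sign changes of $(m+\gamma_3)$ and $(m-1+\beta)$ no longer occur at consecutive indices.

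The paper takes a different route that avoids this entirely: since $\sum n_k$ is odd, it invokes the identity $Q^{(n_0,n_1,n_2,n_3)}(E)=Q^{(l_0,l_1,l_2,l_3)}(E)$ from Takemura's generalized Darboux transformations (\cite[Section 4]{Tak5}), where $l_0=n_0+n_3$, $l_1=n_1-n_3$, $l_2=n_2-n_3$, $l_3=0$. Because $l_3=0$, and because the GLE is invariant under $l_i\mapsto -l_i-1$ (which repairs any negative $l_1$ or $l_2$), one checks $l_0\geq l_1+l_2-1$ in every case and concludes directly from Theorem \ref{thm:Qrealdist}, with (\ref{Qnn}) handling the sign $-1$ and the reduction $n_0\geq n_3$. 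You may want to compare your factor-by-factor computations with this reduction; they illustrate precisely why the authors did not reuse the strategy of Theorem \ref{thm:Qrealdist1} here.
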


\begin{proof} By (\ref{Qnn}) we only need to consider the case
\begin{equation}\label{equu}n_{0}+n_3 = n_{1} +n_{2}+ 1\;\;\text{and}\;\; n_0\geq n_3.\end{equation}
Again by Theorem \ref{thm:Qrealdist}, we only need to consider $n_3\ge 1$.
Since $\sum n_k$ is odd, we define
{\allowdisplaybreaks
\begin{align*}
& l_{0}= (n_{0}+n_{1}+n_{2}+n_{3}+1)/2=n_0+n_3>0,\\
& l_{1}= (n_{0}+n_{1}-n_{2}-n_{3}-1)/2=n_1-n_3,\\
& l_{2}= (n_{0}-n_{1}+n_{2}-n_{3}-1)/2=n_2-n_3,\\
& l_{3}= (n_{0}-n_{1}-n_{2}+n_{3}-1)/2=0.
\end{align*}
}%
Then it was proved in \cite[Section 4]{Tak5} (see also \cite[Section 3]{CKLT}) that
\[Q^{(n_0,n_1,n_2,n_3)}(E)=Q^{(l_0,l_1,l_2,l_3)}(E).\]
Note that if $l_{1}<0$ and $l_2<0$, then $n_1\le n_3-1$ and $n_2\le n_3-1$, which contradict with our assumption (\ref{equu}). Thus there are three cases.

{\bf Case 1}. $l_1\ge 0$ and $l_2\ge 0$.

Then
\[l_{0}-l_{1}-l_2+1=2n_3+2>0.\]Thus, we can apply Theorem \ref{thm:Qrealdist} to $Q^{(l_0,l_1,l_2,l_3)}(E)$ and obtain that the zeros of $Q^{(n_{0}%
,n_{1},n_{2},n_{3})} (E)$ are all real and unequal.

{\bf Case 2}. $l_1\ge 0$ and $l_2< 0$.

Then $-1-l_2\ge 0$. Since \[\frac{d^{2}}{dz^{2}}-\sum_{k=0}^{3}%
l_{k}(l_{k}+1)\wp(z+\tfrac{\omega_{k}}{2})\]
 is \emph{invariant} by replacing $l_{2}$ to
$-l_{2}-1$, we obtain
\[Q^{(n_0,n_1,n_2,n_3)}(E)=Q^{(l_0,l_1,l_2,l_3)}(E)=Q^{(l_0,l_1,-l_2-1,l_3)}(E).\]
Since
\[l_{0}-l_{1}-(-l_2-1)+1=2n_2+3>0,\]
we can apply Theorem \ref{thm:Qrealdist} to $Q^{(l_0,l_1,-l_2-1,l_3)}(E)$ and obtain that the zeros of $Q^{(n_{0}%
,n_{1},n_{2},n_{3})} (E)$ are all real and unequal.

{\bf Case 3}. $l_1<0$ and $l_2\ge 0$. The proof is the same as Case 2.

In conclusion, the zeros of $Q^{(n_{0}%
,n_{1},n_{2},n_{3})} (E)$ are all real and unequal. The proof is complete.
\end{proof}

We are in the position to prove Theorem \ref{sharp-realroot}.

\begin{proof}[Proof of Theorem \ref{sharp-realroot}]
Since neither (\ref{c1-11}) nor (\ref{c2-11}) hold, we have one of the followings hold:
\begin{equation}\label{ffff}
n_{1}+n_{2}-n_{0}-n_{3}\in \{0,1,-1\},
\end{equation}
\begin{equation}\label{ffff1}
n_{1}+n_{2}-n_{0}-n_{3}\geq 2, \quad\text{either $n_{1}=0$ or $n_{2}=0$},
\end{equation}
\begin{equation}\label{ffff2}
n_{0}+n_{3}-n_{1}-n_{2}\geq 2, \quad\text{either $n_{0}=0$ or $n_{3}=0$}.
\end{equation}
If (\ref{ffff}) holds, the assertion follow from Theorems \ref{thm:Qrealdist1} and \ref{thm:Qrealdist2}.
If (\ref{ffff2}) holds, by (\ref{Qnn}) we may assume $n_3=0$ and so the assertion follows from Theorem \ref{thm:Qrealdist}. Finally, we see from  (\ref{Qnn}) that the case (\ref{ffff1}) is equivalent to the case (\ref{ffff2}). The proof is complete.
\end{proof}

\section{Application to pre-modular form}

\label{premodularform}

In this section, we apply Theorem \ref{no-sol-n} to the pre-modular form $Z_{r,s}^{(n)}(\tau)$ introduced by \cite{CLW2}.
As pointed out in the introduction, the solvability of the curvature equation (\ref{eq1-1}), i.e.
\begin{equation}\label{mfe1}\Delta u+e^u=8n\pi\delta_0\quad\text{on}\; E_{\tau},\end{equation}
depends essentially on the moduli $\tau$ of the flat torus $E_{\tau}$ and is intricate from the PDE point of view. To settle this challenging problem, Chai, Wang and the second author studied it from the viewpoint of algebraic geometry. They
developed a theory to connect this PDE problem with the Lam\'{e} equation (i.e. GLE (\ref{GLE-1}) with $n_0=n$ and $n_k=0$ for $k\in \{1,2,3\}$)
\[
y"(z)=[n(n+1)\wp(z;\tau)+E]y(z)
\]
and pre-modular forms. In particular, Wang and the second author \cite{CLW2} proved the following important result.

\begin{theorem}\cite{CLW2}\label{LW-pre} There exists a pre-modular form $Z_{r,s}^{(n)}(\cdot)$ of weight $\frac{n(n+1)}{2}$ such that (\ref{mfe1}) on $E_{\tau}$ has solutions if and only if $Z_{r,s}^{(n)}(\tau)=0$ for some $(r,s)\in \mathbb{R}^{2}\setminus\frac{1}{2}\mathbb{Z}^{2}$.
\end{theorem}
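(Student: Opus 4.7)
The plan is to combine the PDE-to-ODE reduction established in Section \ref{PDE-unitary} with the classical Hermite--Halphen explicit form of Lam\'e solutions, and then carry out an elimination of internal parameters to obtain a single algebraic condition in $(r,s,\tau)$.

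First, I would exploit what Section \ref{PDE-unitary} already gives. By \cite{CLW}, (\ref{mfe1}) has a solution iff it has an even solution, and Theorem \ref{thm5} (specialized to $n_0=n$, $n_1=n_2=n_3=0$) says: an even solution of (\ref{mfe1}) on $E_\tau$ produces $E\in\mathbb{C}$ such that the Lam\'e equation $y''=[n(n+1)\wp(z;\tau)+E]y$ has unitary monodromy. Conversely, one needs the converse of Theorem \ref{thm5} (announced in Remark \ref{rm}): unitary monodromy for some $E$ yields an even PDE solution. Granting both directions, the PDE question reduces to: \emph{for which $\tau$ does the Lam\'e equation admit some $E$ with unitary monodromy?} By Corollary \ref{coro} and Lemma \ref{lemma2}, unitarity means $Q^{(n,0,0,0)}(E;\tau)\neq 0$ and $(\theta_1(E),\theta_2(E))\in\mathbb{R}^2\setminus\mathbb{Z}^2$. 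Writing $\theta_1(E)=-2s$, $\theta_2(E)=2r$ (modulo sign conventions), the problem becomes: characterize pairs $(r,s)\in\mathbb{R}^2\setminus\frac{1}{2}\mathbb{Z}^2$ for which there exists $E$ realizing these as monodromy exponents.

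Second, I would use the Hermite--Halphen ansatz, which is available precisely because the Lam\'e potential $n(n+1)\wp$ is finite-gap. For a common eigenfunction $y_1(z)$ of the monodromy, the ansatz asserts
\begin{equation*}
y_1(z)=e^{\kappa z}\prod_{j=1}^{n}\frac{\sigma(z-a_j;\tau)}{\sigma(z;\tau)}
\end{equation*}
for some $\kappa\in\mathbb{C}$ and $a_1,\dots,a_n\in E_\tau$. Substituting into the Lam\'e equation yields a system of algebraic relations among $(\kappa,a_1,\dots,a_n,E,\tau)$; using Legendre's relation for $\sigma$, the monodromy characters along $\ell_1,\ell_2$ are explicit rational expressions in $(\kappa,\zeta(a_j))$, so matching them with $(e^{-2\pi is},e^{2\pi ir})$ gives two further relations. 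In total one has $n+2$ parameters $(\kappa,a_j,E)$ subject to $n+2$ polynomial constraints, which one can eliminate (for instance by Gr\"obner-style elimination or, more explicitly, by the recursive approach in \cite{CLW,CLW2}) to produce a single equation $Z_{r,s}^{(n)}(\tau)=0$ in $\tau$ alone, parametrized by $(r,s)$. The exclusion of $\frac{1}{2}\mathbb{Z}^2$ matches Lemma \ref{lemma2}-(2): on that locus the two eigenfunctions coincide and the developing map $f=y_1/y_2$ degenerates, so such $(r,s)$ cannot arise from an even PDE solution.

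Third, I would identify $Z_{r,s}^{(n)}$ naturally as a function on $E_\tau$ by introducing the point $z_0:=r\tau+s$ and expressing $Z_{r,s}^{(n)}(\tau)$ as a polynomial in $\wp(z_0;\tau),\wp'(z_0;\tau),g_2(\tau),g_3(\tau)$. Its modular weight is then tracked by the scaling $\wp(z;-1/\tau)=\tau^2\wp(\tau z;\tau)$ combined with Lemma \ref{lemma3}-type transformations, and the weight $n(n+1)/2$ is forced by the top-degree term of the Lam\'e potential $n(n+1)\wp$ together with the degree $g=n$ of the spectral polynomial given by (\ref{genus}).

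The main obstacle is the third step: carrying out the elimination to obtain a genuinely polynomial (not just rational) expression and verifying the sharp weight $n(n+1)/2$. For small $n$ this can be done by hand ($Z_{r,s}^{(1)}$ is a simple combination of $\wp$ and its derivative), but for general $n$ one must either induct, developing a recursive scheme for $Z_{r,s}^{(n+1)}$ from $Z_{r,s}^{(n)}$ along the lines of the Darboux transformations of the Lam\'e hierarchy, or directly analyze the Hermite--Krichever ansatz and use the structure of the spectral curve $F^2=Q^{(n,0,0,0)}(E;\tau)$. Ensuring that the eliminant is modular (and not just quasi-modular) of the precise weight requires a delicate compatibility check under $SL(2,\mathbb{Z})$, and the correct normalization of $(r,s)$ versus $(\kappa,a_j)$; this is where the bulk of \cite{CLW2}'s technical work concentrates.
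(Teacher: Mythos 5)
This statement is not proved in the paper at all: Theorem \ref{LW-pre} is imported verbatim from \cite{CLW2}, so there is no internal proof to compare your sketch against. Judged against the strategy of the cited source, your outline is essentially the right one: reduce the PDE to unitary monodromy of the Lam\'e equation, parametrize the common eigenfunctions by the Hermite--Halphen ansatz on the spectral curve, and eliminate the internal parameters $(a_1,\dots,a_n)$ to obtain a single condition $Z^{(n)}_{r,s}(\tau)=0$, with the exclusion of $\frac12\mathbb{Z}^2$ corresponding to the degenerate locus of Lemma \ref{lemma2}-(2). Three caveats. First, the step you flag as ``granting both directions'' is precisely the hard content: the forward direction is Theorem \ref{thm5}, but the converse (unitary monodromy $\Rightarrow$ existence of an even solution) is exactly what Remark \ref{rm} of this paper defers, and in \cite{CLW,CLW2} it requires constructing the developing map from the two Floquet eigenfunctions and checking that $|f|$ is well defined on $E_\tau$ with the correct conic behavior; a proof that leaves this as an assumption is incomplete. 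Second, your explanation of the weight is not quite the mechanism: $\frac{n(n+1)}{2}$ arises as the degree of the addition map $\sigma_n\colon Y_n\to E_\tau$, $a\mapsto\sum_j a_j$, from the hyperelliptic curve $Y_n=\{F^2=Q^{(n,0,0,0)}(E)\}$, i.e.\ as the number of branches over $r+s\tau$ being eliminated (this also explains why $Z^{(n)}_{r,s}$ is monic of that degree in the Hecke form $Z=Z_{r,s}(\tau)$, cf.\ the displayed formulas for $n\le 4$); it is not read off from the leading coefficient $n(n+1)$ of the potential together with $g=n$. Third, a small normalization slip: the relevant point is $z_0=r+s\tau$, consistent with $Z_{r,s}(\tau)=\zeta(r+s\tau)-r\eta_1-s\eta_2$, not $r\tau+s$.
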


The pre-modular form $Z_{r,s}^{(n)}(\tau)$ is holomorphic in $\tau$ for each $(r,s)\in\mathbb{R}^{2}\setminus\frac{1}{2}\mathbb{Z}^2$, and possess the following properties (see \cite{CLW2}):

\begin{itemize}
\item[(i)] $Z_{r,s}^{(n)}(\tau)=\pm
Z_{m\pm r,n\pm s}^{(n)}(\tau)$ for any $(m,n)\in\mathbb{Z}^{2}$.

\item[(ii)] For any $\gamma=
\begin{pmatrix}
a & b\\
c & d
\end{pmatrix}
\in SL(2,\mathbb{Z})$, we define $\tau^{\prime}=\gamma \cdot \tau:=\frac{a\tau+b}{c\tau+d}$ and $(s^{\prime}%
,r^{\prime}):=(s,r)\cdot \gamma^{-1}$. Then \[Z_{r^{\prime},s^{\prime}}^{(n)}(\tau^{\prime})
    =(c\tau+d)^{\frac{n(n+1)}{2}}Z_{r,s}%
^{(n)}(\tau).\]
\end{itemize}
\noindent In particular, when
$(r,s)\in Q_{N}$ is a $N$-torsion point for some $N\in\mathbb{N}_{\geq 3}$, where
\begin{equation}
Q_{N}:= \left \{  \left.  \left(  \tfrac{k_{1}}{N},\tfrac{k_{2}}%
{N}\right)  \right \vert \gcd(k_{1},k_{2},N)=1,\text{ }0\leq k_{1},k_{2}\leq
N-1\right \}  , \label{q-n}%
\end{equation}
and $\gamma\in \Gamma(N):=\{\gamma\in SL(2,\mathbb{Z})|\gamma\equiv I_2\operatorname{mod}N\}$, then $(r',s')\equiv(r,s)$ mod $\mathbb{Z}^2$ and so
\[Z_{r,s}^{(n)}\left(\tfrac{a\tau+b}{c\tau+d}\right)
=(c\tau+d)^{\frac{n(n+1)}{2}}Z_{r,s}^{(n)}(\tau).\]
Thus $Z_{r,s}^{(n)}(\tau)$ is \emph{a modular form} of weight $\frac{n(n+1)}{2}$ with respect to the principal congruence subgroup $\Gamma(N)$. Due to this reason, $Z_{r,s}^{(n)}(\tau)$ are called \emph{pre-modular forms} in this paper as in \cite{CLW2}.

For $n\le 4$, the explicit expression of $Z_{r,s}^{(n)}(\tau)$ is known; see \cite{CLW2}.
Let $\zeta(z;\tau):=-\int^{z}\wp(\xi;\tau)d\xi$ be the Weierstrass zeta
function, which is odd and has two quasi-periods $\eta_{k}(\tau):=2\zeta(\frac{\omega_k}{2};\tau)$, $k=1,2$:
\[
\eta_{1}(\tau)=\zeta(z+1;\tau)-\zeta(z;\tau),\text{ \ }\eta_{2}(\tau
)=\zeta(z+\tau;\tau)-\zeta(z;\tau). \]
Define
\[
Z=Z_{r,s}(\tau)  :=\zeta(r+s\tau;\tau)-r\eta_{1}(\tau)-s\eta_{2}%
(\tau).
\]
Then it is known \cite{CLW2} that (write $\wp=\wp(r+s\tau;\tau)$ and $\wp^{\prime
}=\wp^{\prime}(r+s\tau;\tau)$ for convenience):
$Z_{r,s}^{(1)}(\tau)=Z_{r,s}(\tau)$,
\[
Z_{r,s}^{(2)}(\tau)=Z^{3}-3\wp Z-\wp^{\prime},
\]%
\begin{align*}
Z_{r,s}^{(3)}(\tau)=  &  Z^{6}-15\wp Z^{4}-20\wp^{\prime}Z^{3}+\left(
\tfrac{27}{4}g_{2}-45\wp^{2}\right)  Z^{2}\\
&  -12\wp \wp^{\prime}Z-\tfrac{5}{4}(\wp^{\prime})^{2}.
\end{align*}
{\allowdisplaybreaks%
\begin{align*}
Z_{r,s}^{(4)}(\tau)=  &  Z^{10}-45\wp Z^{8}-120\wp^{\prime}Z^{7}+(\tfrac
{399}{4}g_{2}-630\wp^{2})Z^{6}-504\wp \wp^{\prime}Z^{5}\\
&  -\tfrac{15}{4}(280\wp^{3}-49g_{2}\wp-115g_{3})Z^{4}+15(11g_{2}-24\wp
^{2})\wp^{\prime}Z^{3} \\
&  -\tfrac{9}{4}(140\wp^{4}-245g_{2}\wp^{2}+190g_{3}\wp+21g_{2}^{2}%
)Z^{2}\label{z-n-4}\\
&  -(40\wp^{3}-163g_{2}\wp+125g_{3})\wp^{\prime}Z+\tfrac{3}{4}(25g_{2}%
-3\wp^{2})(\wp^{\prime})^{2}.
\end{align*}
}%
For general $n$, the expression of $Z_{r,s}^{(n)}(\tau)$ is too complicate to be written down.

Define
\[
F_{0}:=\{ \tau \in \mathbb{H}\ |\ 0\leqslant \  \text{Re}\  \tau \leqslant
1\  \text{and}\ |z-\tfrac{1}{2}|\geqslant \tfrac{1}{2}\},
\]
which is a fundamental domain of
\[
\Gamma_{0}(2):=\left \{  \left.
\begin{pmatrix}
a & b\\
c & d
\end{pmatrix}
\in SL(2,\mathbb{Z})\right \vert c\equiv0\text{ }\operatorname{mod}2\right \}.
\]
Here, as an application of Theorems \ref{no-sol-n} and \ref{LW-pre}, we have the following result.

\begin{theorem}
\label{neq-zero}Let $(r,s)\in \mathbb{R}^2\backslash \frac{1}{2}\mathbb{Z}^{2}$. Then $Z_{r,s}^{(n)}(\tau)\not =0$ for
any $\tau \in \partial F_{0}\cap \mathbb{H}$.
\end{theorem}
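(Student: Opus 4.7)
\medskip

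\noindent\textbf{Proof plan for Theorem \ref{neq-zero}.}

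The plan is to reduce the three boundary arcs of $\partial F_0\cap\mathbb{H}$ to the imaginary axis by elements of $SL(2,\mathbb{Z})$ and then invoke Theorem \ref{no-sol-n} together with Theorem \ref{LW-pre}. Explicitly, $\partial F_{0}\cap\mathbb{H}$ decomposes as $L_{1}\cup L_{2}\cup C$, where
\[
L_{1}:=i\mathbb{R}_{>0},\quad L_{2}:=1+i\mathbb{R}_{>0},\quad C:=\{\tau\in\mathbb{H}\,:\,|\tau-\tfrac{1}{2}|=\tfrac{1}{2}\}.
\]
On $L_{1}$ the point $\tau$ parametrises a rectangular torus, so Theorem \ref{no-sol-n} says that (\ref{mfe1}) admits no solution on $E_{\tau}$; Theorem \ref{LW-pre} then gives $Z_{r,s}^{(n)}(\tau)\neq 0$ for every $(r,s)\in\mathbb{R}^{2}\setminus\tfrac{1}{2}\mathbb{Z}^{2}$. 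This handles the first arc.

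For the second and third arcs, I would exhibit explicit modular transformations mapping them onto $L_{1}$. For $L_{2}$, the matrix $\gamma_{2}=\bigl(\begin{smallmatrix}1 & -1\\ 0 & 1\end{smallmatrix}\bigr)$ satisfies $\gamma_{2}\cdot\tau=\tau-1\in L_{1}$. For $C$, I would use $\gamma_{3}=\bigl(\begin{smallmatrix}-1 & 0\\ 1 & -1\end{smallmatrix}\bigr)\in SL(2,\mathbb{Z})$: writing $\tau=\tfrac{1}{2}+\tfrac{1}{2}e^{i\theta}$ with $\theta\in(0,\pi)$, a short computation yields
\[
\gamma_{3}\cdot\tau=\frac{-\tau}{\tau-1}=\frac{1+e^{i\theta}}{1-e^{i\theta}}=i\cot(\theta/2)\in L_{1}.
\]
Hence every $\tau\in L_{2}\cup C$ is $SL(2,\mathbb{Z})$-equivalent to a point $\tau'\in L_{1}$.

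To transfer the non-vanishing from $\tau'$ to $\tau$, I would invoke property (ii) of $Z_{r,s}^{(n)}$: for $\tau'=\gamma\cdot\tau$ and $(s',r'):=(s,r)\gamma^{-1}$,
\[
Z_{r',s'}^{(n)}(\tau')=(c\tau+d)^{\frac{n(n+1)}{2}}Z_{r,s}^{(n)}(\tau).
\]
The map $(r,s)\mapsto(r',s')$ is a bijection of $\mathbb{R}^{2}$ that preserves the sublattice $\tfrac{1}{2}\mathbb{Z}^{2}$ (since $\gamma,\gamma^{-1}\in SL(2,\mathbb{Z})$), hence it preserves the complement $\mathbb{R}^{2}\setminus\tfrac{1}{2}\mathbb{Z}^{2}$. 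Since $c\tau+d\neq 0$ throughout $\mathbb{H}$, vanishing of $Z_{r,s}^{(n)}(\tau)$ is equivalent to vanishing of $Z_{r',s'}^{(n)}(\tau')$. Combined with Step~1 applied to $\tau'\in L_{1}$, this gives $Z_{r,s}^{(n)}(\tau)\neq 0$ for every $(r,s)\in\mathbb{R}^{2}\setminus\tfrac{1}{2}\mathbb{Z}^{2}$, which finishes the proof.

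There is no serious obstacle: the only substantive input is Theorem \ref{no-sol-n}, and the remainder is a routine verification that the three boundary arcs of $F_{0}$ lie in a single $SL(2,\mathbb{Z})$-orbit and that the pre-modular transformation law respects the set $\mathbb{R}^{2}\setminus\tfrac{1}{2}\mathbb{Z}^{2}$ of admissible characters. The mildest subtlety is the explicit choice of $\gamma_{3}$ for the circular arc, which is recorded in the computation above.
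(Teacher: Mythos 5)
Your proposal is correct and follows essentially the same route as the paper: non-vanishing on $i\mathbb{R}_{>0}$ via Theorem \ref{no-sol-n} and Theorem \ref{LW-pre}, then transport to the other two boundary arcs by the transformation law (ii). The only cosmetic difference is your choice of $\gamma_{3}=\bigl(\begin{smallmatrix}-1 & 0\\ 1 & -1\end{smallmatrix}\bigr)$ in place of the paper's $\bigl(\begin{smallmatrix}1 & 0\\ -1 & 1\end{smallmatrix}\bigr)$, which induces the same M\"obius map and, after applying property (i), the same conclusion.
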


\begin{proof}
It does not seem that this assertion could be obtained directly from the expressions
of $Z_{r,s}^{(n)}(\tau)$ even for $n\leq 4$. Indeed, this lemma is a consequence of our PDE result.

Given $\tau \in \partial F_{0}\cap \mathbb{H}$. If $\tau \in i\mathbb{R}_{>0}$, then Theorems \ref{no-sol-n} and \ref{LW-pre} together
imply $Z_{r,s}^{(n)}(\tau)\not =0$ for any $(r,s)\in \mathbb{R}^{2}%
\backslash \frac{1}{2}\mathbb{Z}^{2}$.
If $\tau \in i\mathbb{R}_{>0}+1$, then
by applying $\gamma=%
\begin{pmatrix}
1 & -1\\
0 & 1
\end{pmatrix}
$ in property (ii), we have that $\tau-1\in i\mathbb{R}_{>0}$ and
\[
Z_{r,s}^{(n)}(\tau)=Z_{r+s,s}^{(n)}(\tau-1)\not =0\text{ for any }%
(r,s)\in \mathbb{R}^{2}\backslash \tfrac{1}{2}\mathbb{Z}^{2}.
\]
If $|\tau-\frac{1}{2}|=\frac{1}{2}$, then again by applying $\gamma=%
\begin{pmatrix}
1 & 0\\
-1 & 1
\end{pmatrix}
$ in property (ii)  we see that $\frac{\tau}{1-\tau}\in i\mathbb{R}_{>0}$ and
\[
(1-\tau)^{\frac{n(n+1)}{2}}Z_{r,s}^{(n)}(\tau)=Z_{r,r+s}^{(n)}(\tfrac{\tau}{1-\tau}%
)\not =0\text{ for any }(r,s)\in \mathbb{R}^{2}{\small \backslash}\tfrac{1}%
{2}\mathbb{Z}^{2}.
\]
This completes the proof.
\end{proof}

Theorem \ref{neq-zero} has important applications to studying the zero structure of $Z_{r,s}^{(n)}(\tau)$. By property (ii), we can restrict $\tau$ in the fundamental domain $F_{0}$ of $\Gamma
_{0}(2)$,
and by (i), we only need to consider $(r,s)\in \lbrack0,1]\times
\lbrack0,\frac{1}{2}]\backslash \frac{1}{2}\mathbb{Z}^{2}$.
Define four open triangles:
{\allowdisplaybreaks
\begin{align*}%
&\triangle_{0}:=\{(r,s)\mid0<r,s<\tfrac{1}{2},\text{ }r+s>\tfrac{1}{2}\},\\
&\triangle_{1}:=\{(r,s)\mid \tfrac{1}{2}<r<1,\text{ }0<s<\tfrac{1}{2},\text{
}r+s>1\},\\
&\triangle_{2}:=\{(r,s)\mid \tfrac{1}{2}<r<1,\text{ }0<s<\tfrac{1}{2},\text{
}r+s<1\},\\
&\triangle_{3}:=\{(r,s)\mid r>0,\text{ }s>0,\text{ }r+s<\tfrac{1}{2}\}.
\end{align*}
}%
In \cite{CKLW,CL-E2}, Theorem \ref{neq-zero} was applied to prove the following results.
\begin{theorem} \cite{CKLW} Let $(r,s)\in \lbrack0,1]\times \lbrack0,\frac{1}{2}]\backslash
\frac{1}{2}\mathbb{Z}^{2}$. Then $Z_{r,s}(\tau)=0$ has a solution $\tau$
in $F_{0}$ if and only if $(r,s)\in \triangle_{0}$. Furthermore, for any $(r,s)\in \triangle_{0}$, the zero $\tau \in F_{0}$ is unqiue and satisfies
$\tau \in \mathring{F}_{0}=F_{0}\setminus \partial F_0$.
\end{theorem}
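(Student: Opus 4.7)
The plan is to combine Theorem \ref{neq-zero} with the PDE correspondence of Theorem \ref{LW-pre} and a careful monodromy analysis. By Theorem \ref{neq-zero}, any zero $\tau \in F_0$ of $Z_{r,s}$ automatically lies in the interior $\mathring{F}_0$, which immediately takes care of the boundary-exclusion clause. What remains is to determine, for $(r,s) \in [0,1]\times[0,\tfrac{1}{2}]\setminus \tfrac{1}{2}\mathbb{Z}^2$, exactly when $Z_{r,s}$ has a zero in $\mathring{F}_0$ and to prove uniqueness.

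First I would reduce to PDE via Theorem \ref{LW-pre} with $n=1$: the existence of $\tau\in F_0$ with $Z_{r,s}(\tau) = 0$ is equivalent to solvability of (\ref{eq1-1}) on $E_\tau$, and for each such $\tau$ the parameter $(r,s)$ is read off from the monodromy of the developing map $f(z) = y_1(z)/y_2(z)$ of an even solution $u_\tau$. Concretely, the Hermite-type solution $y_1$ of the Lam\'e equation $y''(z) = [2\wp(z;\tau) + E]y(z)$ (obtained from Section \ref{PDE-unitary} with $n_0 = 1$, $n_1 = n_2 = n_3 = 0$) satisfies $y_1(z+1) = e^{2\pi i r}y_1(z)$ and $y_1(z+\tau) = e^{2\pi i s}y_1(z)$, so the symmetries $Z_{r,s} = \pm Z_{m\pm r, n\pm s}$ place the representative uniquely in $[0,1]\times[0,\tfrac{1}{2}]$. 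I would then introduce the map $\Psi : \Omega_1 \to [0,1]\times[0,\tfrac{1}{2}]$, $\tau \mapsto (r(\tau), s(\tau))$, where $\Omega_1 \subset \mathring{F}_0$ is the open solvable locus (nonempty by \cite{LW,LW4}, and disjoint from the imaginary axis by Theorem \ref{no-sol-n}). The task reduces to showing that $\Psi$ is a homeomorphism onto $\triangle_0$.

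For this, the two key steps are image identification and injectivity. Image identification uses Theorem \ref{neq-zero} for properness: as $\tau \to \partial F_0$, no zero of $Z_{r,s}$ survives, so $\Psi$ extends to a continuous map $\overline{\Omega_1} \to \overline{\triangle_0}$ sending $\partial \Omega_1$ into $\partial \triangle_0$. A boundary-monodromy analysis at the degenerations of $u_\tau$ then identifies the three active boundary segments: as $u_\tau$ concentrates at a half-period $\omega_k/2$ with $k\in\{1,2,3\}$, the pair $(r(\tau), s(\tau))$ tends to the segment $r=\tfrac{1}{2}$, $s=\tfrac{1}{2}$, or $r+s=\tfrac{1}{2}$ respectively, while the remaining sides $r=0$, $s=0$, $r+s=1$ of $[0,1]\times[0,\tfrac{1}{2}]$ are never attained (they would force $(r,s) \in \tfrac{1}{2}\mathbb{Z}^2$, where $Z_{r,s}\equiv 0$, and are excluded a priori). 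Hence $\Psi(\Omega_1) = \triangle_0$. Injectivity, and the fact that $\Psi$ is a local diffeomorphism, follow from the nondegeneracy of the linearization of (\ref{eq1-1}) at the unique even $u_\tau$, cf.\ \cite{LW4}. Combining these yields existence for every $(r,s)\in \triangle_0$, non-existence for $(r,s)$ outside $\triangle_0$, uniqueness of the zero in $F_0$, and its location in $\mathring{F}_0$.

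The main obstacle I anticipate is the boundary-monodromy analysis pinpointing which segment of $\partial \triangle_0$ arises from which degeneration of $u_\tau$, and in particular ruling out zeros coming from $(r,s) \in \triangle_1 \cup \triangle_2 \cup \triangle_3$. This is where the unitarity of the monodromy established in Theorem \ref{general-unitary} becomes essential: it forces $(r(\tau),s(\tau))\in \mathbb{R}^2$ throughout $\Omega_1$, so that $\partial \triangle_0$ can be approached only along the three admissible lines and never through complex excursions; combined with Theorem \ref{neq-zero}, which keeps zeros off $\partial F_0$, this closes the argument.
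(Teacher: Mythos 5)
This statement is not proved in the paper: it is imported verbatim from \cite{CKLW}, and the only ingredient the present paper contributes to it is Theorem \ref{neq-zero} (the exclusion of zeros on $\partial F_{0}\cap\mathbb{H}$). Your opening move --- using Theorem \ref{neq-zero} to force any zero into $\mathring{F}_{0}$, then passing to the PDE/Green-function picture via Theorem \ref{LW-pre} with $n=1$ and running a degree argument on the map $\tau\mapsto(r(\tau),s(\tau))$ --- is the right general spirit, and it is indeed how Theorem \ref{neq-zero} enters in \cite{CKLW}. But since the paper offers no proof to compare against, your sketch has to stand on its own, and as written it has several genuine gaps.

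The most concrete one is the parenthetical claim that the sides $r=0$, $s=0$, $r+s=1$ ``would force $(r,s)\in\tfrac{1}{2}\mathbb{Z}^{2}$'': this is false --- the point $(0,\tfrac14)$ lies on $r=0$ and is not in $\tfrac12\mathbb{Z}^{2}$, and $Z_{0,s}$ is certainly not identically zero. Ruling out zeros for $(r,s)$ outside $\triangle_{0}$ (on those edges and, more importantly, in $\triangle_{1}\cup\triangle_{2}\cup\triangle_{3}$) is the hard half of the theorem; it requires locating the extra critical points of the Green function, and your sketch supplies no mechanism for it --- unitarity of the monodromy only puts $(r,s)$ in $\mathbb{R}^{2}$, not in $\triangle_{0}$. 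Second, a proper local diffeomorphism onto the simply connected $\triangle_{0}$ is a covering whose degree equals the number of components of $\Omega_{1}$; uniqueness of the zero therefore needs connectedness of $\Omega_{1}$ (or a degree-one computation at a single point), which you never address. Third, properness requires controlling $\Psi$ not only along the interior boundary $\partial\Omega_{1}$ (where solutions blow up --- itself a substantial analysis) but also at the cusps $i\infty$, $0$, $1$ of the noncompact domain $F_{0}$, which Theorem \ref{neq-zero} does not cover and which the sketch ignores. Finally, $\Psi$ is well defined only if the even solution of (\ref{eq1-1}) is unique for each $\tau$ in the solvable locus, a nontrivial input from \cite{LW} that you invoke only implicitly. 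So the proposal is a plausible roadmap, but each of these four points is a missing piece rather than a routine verification.
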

\begin{theorem}\cite{CL-E2}
\label{thm20}Let $(r,s)\in \lbrack0,1]\times \lbrack0,\frac{1}{2}]\backslash
\frac{1}{2}\mathbb{Z}^{2}$. Then $Z_{r,s}^{(2)}(\tau)=0$ has a solution $\tau$
in $F_{0}$ if and only if $(r,s)\in \triangle_{1}\cup \triangle_{2}\cup
\triangle_{3}$. Furthermore, for any $(r,s)\in \triangle_{1}\cup \triangle
_{2}\cup \triangle_{3}$, the zero $\tau \in F_{0}$ is unqiue and satisfies
$\tau \in \mathring{F}_{0}$.
\end{theorem}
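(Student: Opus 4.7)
The strategy combines Theorem \ref{neq-zero} with the PDE--pre-modular form correspondence of Theorem \ref{LW-pre} at $n=2$, together with degree counting for the weight-$3$ modular form $Z_{r,s}^{(2)}$.

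\emph{Reduction via the boundary.} Theorem \ref{neq-zero} immediately gives the ``$\tau\in\mathring{F}_0$'' part of the conclusion: since $Z_{r,s}^{(2)}$ is nonzero on $\partial F_0\cap\mathbb{H}$, any zero in $F_0$ is automatically interior. Moreover, this boundary non-vanishing allows one to apply the argument principle along $\partial F_0$ without singularities on the contour, which is crucial for any subsequent degree-theoretic count.

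\emph{Existence for $(r,s)\in\triangle_1\cup\triangle_2\cup\triangle_3$.} The goal is to produce, for each such $(r,s)$, an even solution of $\Delta u+e^u=16\pi\delta_0$ on some $E_\tau$ with $\tau\in\mathring{F}_0$, whose Lam\'e monodromy eigenvalues $e^{\pm\pi i\theta_j(E)}$ recover $(r,s)$ through $(\theta_1(E),\theta_2(E))\equiv(2s,-2r)\pmod{2\mathbb{Z}^2}$. By Theorem \ref{LW-pre} such a $\tau$ is a zero of $Z_{r,s}^{(2)}$. The construction follows the $SU(2)$-characterization of Corollary \ref{coro}: the datum $(r,s)$ prescribes a unitary representation of $\pi_1(E_\tau\setminus\{0\})$, and one solves for the pair $(\tau,E)$ from the equation $Q^{(2,0,0,0)}(E;\tau)=0$ together with the reality/unitarity of $(\theta_1,\theta_2)$, using a continuity/degree argument along one-parameter families to ensure the resulting $\tau$ lies in $\mathring{F}_0$.

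\emph{Uniqueness, exclusion of $\triangle_0$, and the main obstacle.} When $(r,s)\in Q_N$, the function $Z_{r,s}^{(2)}(\cdot)$ is a holomorphic modular form of weight $3$ on $\Gamma(N)$; the valence formula bounds its total zero count on a $\Gamma(N)$-fundamental domain, and property (ii) descends this bound to $F_0$. The theorem preceding the statement already exhausts the orbit $\triangle_0$ with one zero per $(r,s)$ for the $n=1$ pre-modular form; combining this with the partition $(0,1)\times(0,\tfrac12)\setminus\tfrac12\mathbb{Z}^2=\triangle_0\sqcup\triangle_1\sqcup\triangle_2\sqcup\triangle_3$, the sign symmetry of property (i), and the boundary vanishing-free zone from Theorem \ref{neq-zero}, one forces exactly one zero in $F_0$ for each $(r,s)\in\triangle_1\cup\triangle_2\cup\triangle_3$ and none for $(r,s)\in\triangle_0$. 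The most delicate step---and the main obstacle---is distinguishing the three triangles $\triangle_1,\triangle_2,\triangle_3$ on the PDE side, since each corresponds to a distinct ``type'' of even solution (geometrically, to a different arrangement of the critical points of the developing map relative to the half-periods $\omega_k/2$). Tracing how $(r,s)$ varies as $\tau$ approaches the three boundary components of $F_0$, and using the $\Gamma_0(2)$-action together with Theorem \ref{neq-zero} to match each limit with a specific triangle, is the route I would take to pin down the correspondence.
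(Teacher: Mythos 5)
First, a point of comparison: the paper does not prove Theorem \ref{thm20} at all. It is quoted from \cite{CL-E2}, and the only contribution of the present paper to it is Theorem \ref{neq-zero} (the non-vanishing of $Z^{(n)}_{r,s}$ on $\partial F_0\cap\mathbb{H}$), which the authors note is \emph{applied} in \cite{CL-E2}. So there is no internal proof to match your proposal against; your first step (zeros in $F_0$ are automatically interior, and the contour $\partial F_0$ is zero-free) is the one ingredient the paper actually supplies, and you use it correctly.

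As a proof, however, your outline has genuine gaps. (1) The existence step is not carried out: ``solves for the pair $(\tau,E)$ \dots using a continuity/degree argument along one-parameter families'' and ``is the route I would take'' describe a plan, not an argument; in particular nothing you write shows that the unitary monodromy datum $(r,s)\in\triangle_1\cup\triangle_2\cup\triangle_3$ is actually realized by some $\tau\in F_0$, nor which triangle corresponds to which degeneration. (2) The uniqueness count via the valence formula applies only to torsion $(r,s)\in Q_N$, where $Z^{(2)}_{r,s}$ is a genuine $\Gamma(N)$-modular form; for irrational $(r,s)$ you would need a separate limiting or deformation argument, and even in the rational case the valence formula counts all zeros (with multiplicity, including contributions at cusps and elliptic points) over a $\Gamma(N)$-fundamental domain, which is a union of many $SL(2,\mathbb{Z})$-translates of $F_0$; descending to ``exactly one simple zero in $F_0$'' requires tracking how the orbit of $(r,s)$ distributes among the four triangles and ruling out zeros at the cusps, none of which is supplied. (3) Most seriously, the exclusion of $\triangle_0$ is a non sequitur: the fact that $Z^{(1)}_{r,s}$ has its zeros exactly for $(r,s)\in\triangle_0$ says nothing about $Z^{(2)}_{r,s}$, which is a different form of different weight attached to a different equation ($16\pi$ rather than $8\pi$); there is no complementarity principle that forces the zero sets of $Z^{(1)}$ and $Z^{(2)}$ to partition the four triangles. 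The ``only if'' direction of the theorem therefore remains unproved in your sketch.
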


Among their applications back to the curvature equation (\ref{mfe1}), such results have other interesting applications. For example, Theorem \ref{thm20} can be used to completely determine the critical points of the Eisenstein series $E_2(\tau)$ of weight $2$; see \cite{CL-E2}. We will study the zero structure of $Z_{r,s}^{(n)}(\tau)$ for $n\in\{3,4\}$ via Theorem \ref{neq-zero} in future.

\appendix

\section{Spectral theory and finite-gap potential}

\label{Floquet-theory}

In this appendix, we recall the spectral theory for Hill's
equation with complex-valued potentials \cite{GW}, which will be applied in Lemma \ref{lemma5}.

Let $q(x)$ is a complex-valued continuous nonconstant periodic function of
period $\Omega$ on $\mathbb{R}$. Consider the following Hill's equation%
\begin{equation}
y^{\prime \prime}(x)+q(x)y(x)=Ey(x),\text{ \  \ }x\in \mathbb{R}. \label{eq2-1}%
\end{equation}
This equation has received an enormous amount of consideration due to its
ubiquity in applications as well as its structural richness; see e.g.
\cite{GW,GW2} and references therein for historical reviews.

Let $y_{1}(x)$ and $y_{2}(x)$ be any two linearly independent solutions of
equation (\ref{eq2-1}). Then so do $y_{1}(x+\Omega)$ and $y_{2}(x+\Omega)$ and
hence there exists a monodromy matrix $M(E)\in SL(2,\mathbb{C})$ such that
\[
(y_{1}(x+\Omega),y_{2}(x+\Omega))=(y_{1}(x), y_{2}(x))M(E).
\]
A solution of Hill's equation (\ref{eq2-1}) is called \textit{a Floquet
solution} if it is a eigenfunction of the monodromy matrix $M(E)$.
Define the \emph{Hill's discriminant} $\Delta (E)$ by
\begin{equation}
\label{trace}\Delta(E):=\text{tr}M(E),
\end{equation}
which is clearly an invariant of (\ref{eq2-1}), i.e. does not depend on the choice of linearly
independent solutions. This $\Delta(E)$ is an entire function and plays a
fundamental role since it encodes all the spectrum
information of the associated operator; see e.g. \cite{GW2} and references
therein. Indeed, we define
\begin{equation}
\mathcal{S}:=\Delta^{-1}([-2,2])=\{E\in \mathbb{C}\,|\, -2\leq \Delta(E)\leq2\}
\end{equation}
to be \textit{the conditional stability set} of the operator $L=\frac{d^{2}%
}{dx^{2}}+q(x)$. Since $q(x)$ is assumed to be continuous, $\mathcal{S}$ can
be characterized as
\[
\mathcal{S}=\{E\in \mathbb{C}\,|\, Ly=Ey\; \text{has a bounded solution on
$\mathbb{R}$}\}.
\]
Then it was proved in \cite{Rofe-Beketov} that \textit{this $\mathcal{S}$
coincides with the spectrum $\sigma(H)$ of the associated linear operator $H$
in $L^{2}(\mathbb{R}, \mathbb{C})$} (i.e. $H$ is defined as $Hf=Lf$, $f\in
H^{2,2}(\mathbb{R},\mathbb{C})$).

On the other hand, we define
\[
d(E):=\text{ord}_{E}(\Delta(\cdot)^{2}-4).
\]
Then it is well known (cf. \cite[Section 2.3]{Naimark}) that $d(E)$ equals \textit{the algebraic multiplicity of (anti)periodic
eigenvalues}. Let $c(E,x,x_{0})$ and $s(E,x,x_{0})$ be the special fundamental
system of solutions of (\ref{eq2-1}) satisfying by the initial values
\[
c(E,x_{0},x_{0})=s^{\prime}(E,x_{0},x_{0})=1,\ c^{\prime}(E,x_{0}%
,x_{0})=s(E,x_{0},x_{0})=0.
\]
Then we have
\[
\Delta(E)=c(E,x_{0}+\Omega,x_{0})+s^{\prime}(E,x_{0}+\Omega,x_{0}).
\]
Define
\[
p(E,x_{0}):=\text{ord}_{E}s(\cdot,x_{0}+\Omega,x_{0}),
\]%
\[
p_{i}(E):=\min \{p(E,x_{0}):x_{0}\in \mathbb{R}\}.
\]
It is known  that $p(E,x_{0})$ is the algebraic
multiplicity of a Dirichlet eigenvalue on the interval $[x_0, x_0+\Omega]$,
and $p_{i}(E)$ denotes the immovable part of $p(E,x_{0})$ (cf. \cite{GW}). It
was proved in \cite[Theorem 3.2]{GW} that $d(E)-2p_{i}(E)\geq0$. Define
\begin{equation}
D(E):=E^{p_{i}(0)}\prod \limits_{\lambda \in \mathbb{C}\backslash \{0\}} \left(
1-\tfrac{E}{\lambda}\right)  ^{p_{i}(\lambda)}. \label{eq3-2}%
\end{equation}
Let us recall the following important result proved in \cite{GW}.

\medskip \noindent \textbf{Theorem B.} \cite[Theorem 4.1]{GW} \textit{Assume
that $q(x)$ is a complex-valued continuous nonconstant periodic function of
period $\Omega$ on $\mathbb{R}$ and that equation (\ref{eq2-1}) has two
linearly independent Floquet solutions for all $E\in \mathbb{C}\setminus
\{E_{j}\}_{j=1}^{\tilde{m}}$ for some $\tilde{m}\in \mathbb{Z}_{\geq0}$ and
precisely one Floquet solution for each $E=E_{j}$. Then }

\textit{(i) $d(E)-2p_{i}(E)>0$ on a finite set $\{E_{j}\}_{j=1}^{m}$ including
$\{E_{j}\}_{j=1}^{\tilde{m}}$, $m\geq \tilde{m}$, and $d(E)-2p_{i}(E)=0$
elsewhere. The Wronskian of two nontrivial Floquet solutions which are
linearly independent on some punctured disk $0<|E-\lambda|<\varepsilon$ tends
to zero as $E\to \lambda$ if and only if $\lambda \in \{E_{j}\}_{j=1}^{m}$. }

\textit{(ii) $\sum_{j=1}^{m} (d(E_{j})-2p_{i}(E_{j}))=2g+1$ for some
$g\in \mathbb{Z}_{\geq0}$ and $q(x)$ is an algebro-geometric finite gap
potential associated with the compact (possibly singular) hyperelliptic curve
obtained upon one-point compactification of the curve
\begin{equation}
\label{hyper}F^{2}=R_{2g+1}(E):=\prod_{j=1}^{m} (E-E_{j})^{d(E_{j}%
)-2p_{i}(E_{j})}=C\frac{\Delta(E)^{2}-4}{D(E)^{2}}.
\end{equation}
Here $D(E)$ is seen in (\ref{eq3-2}) and $C$ is some nonzero constant. }

\textit{(iii) the spectrum $\sigma(H)=\mathcal{S}$ consists of finitely many
bounded spectral arcs $\sigma_{k}$, $1\leq k\leq \tilde{g}$ for some $\tilde
{g}\leq g$ and one semi-infinite arc $\sigma_{\infty}$ which tends to
$-\infty+\langle q\rangle$, with $\langle q\rangle=\frac{1}{\Omega}\int
_{x_{0}}^{x_{0}+\Omega}q(x)dx$, i.e.
\[
\sigma(H)=\mathcal{S}=\sigma_{\infty}\cup \cup_{k=1}^{\tilde{g}}\sigma_{k}.
\]
Furthermore, the finite end points of such arcs must be those $E\in
\{E_{j}\}_{j=1}^{m}$ with $d(E)$ odd. } \medskip

Remark that if we assume in addition that $q(x)$ is \textit{real-valued} in
Theorem B, then it is well-known (cf. \cite{GW,GW2}) that $R_{2g+1}(E)$ has
$2g+1$ distinct real zeros, denoted by $E_{2g}<E_{2g-1}<\cdots<E_{1}<E_{0}$,
and
\begin{equation}
\label{spectrum}\sigma(H)=\mathcal{S}=(-\infty, E_{2g}]\cup[E_{2g-1},
E_{2g-2}]\cup \cdots \cup[E_{1}, E_{0}],
\end{equation}
namely the spectrum has the so-called \emph{finite-gap property}, and so $q(x)$ is a so-called \emph{finite-gap potential}. In Section \ref{general-lame}, we show that even if $q(x)$ is \emph{not real-valued}, the
finite-gap property (\ref{spectrum}) might still hold in some special
situations; see Lemma \ref{lemma5}.

\bigskip

{\bf Acknowledgements} The research of the first author was supported by NSFC (No. 11701312).

\end{document}